\documentclass[12pt]{amsart}

\usepackage{amsfonts}
\usepackage{amsthm}
\usepackage{latexsym}
\usepackage{amsmath}
\usepackage{amssymb}
\usepackage{amscd}
\usepackage{amsmath}
\usepackage{mathrsfs}
\usepackage{graphicx}
\usepackage{a4wide}
\usepackage{color}
\usepackage{xcolor}
\usepackage{mathtools}
\usepackage{empheq}
\usepackage[font=small,labelfont=bf]{caption}
\definecolor{forestgreen}{RGB}{000,144,000}

% THEOREM LIKE ENVIRONMENTS

\makeatletter
\CheckCommand*\refstepcounter[1]{\stepcounter{#1}%
      \protected@edef\@currentlabel
       {\csname p@#1\endcsname\csname the#1\endcsname}%
  }
\renewcommand*\refstepcounter[1]{\stepcounter{#1}%
    \protected@edef\@currentlabel
      {\csname p@#1\expandafter\endcsname\csname the#1\endcsname}%
  }
\def\labelformat#1{\expandafter\def\csname p@#1\endcsname##1}
\DeclareRobustCommand\Ref[1]{\protected@edef\@tempa{\ref{#1}}%
     \expandafter\MakeUppercase\@tempa
  }
\makeatother

% My numberlike command. It tells tex to number a theorem environment like
% some other theorem environment (say lemma).
%     \numberlike{theorem_environment_1}{theorem_environment_2}
\makeatletter
  \newcommand{\numberlike}[2]{%
     \expandafter\def\csname c@#1\endcsname{%
         \expandafter\csname c@#2\endcsname}%
  }
\makeatother

% To have the theorem environments numbered within sections.

\def\DefaultNumberTheoremWithin{section}

\theoremstyle{plain}
\newtheorem{lemma}{Lemma}
     \numberwithin{lemma}{\DefaultNumberTheoremWithin}
     \labelformat{lemma}{Lemma~#1}

     \numberwithin{claim}{\DefaultNumberTheoremWithin}
     \numberlike{claim}{lemma}
     \labelformat{claim}{Claim~#1}
\newtheorem{theorem}{Theorem}
     \numberwithin{theorem}{\DefaultNumberTheoremWithin}
     \numberlike{theorem}{lemma}
     \labelformat{theorem}{Theorem~#1}
\newtheorem{corollary}{Corollary}
     \numberwithin{corollary}{\DefaultNumberTheoremWithin}
     \numberlike{corollary}{lemma}
     \labelformat{corollary}{Corollary~#1}
\newtheorem{proposition}{Proposition}
     \numberwithin{proposition}{\DefaultNumberTheoremWithin}
     \numberlike{proposition}{lemma}
     \labelformat{proposition}{Proposition~#1}

     \numberwithin{conjecture}{\DefaultNumberTheoremWithin}
     \numberlike{conjecture}{lemma}
     \labelformat{conjecture}{Conjecture~#1}

\theoremstyle{definition}
\newtheorem{definition}{Definition}
     \numberwithin{definition}{\DefaultNumberTheoremWithin}
     \numberlike{definition}{lemma}
     \labelformat{definition}{Definition~#1}

\theoremstyle{definition}

     \numberwithin{question}{\DefaultNumberTheoremWithin}
     \numberlike{question}{lemma}
     \labelformat{question}{Question~#1}

\theoremstyle{definition}
\newtheorem{problem}{Problem}
     \numberwithin{problem}{\DefaultNumberTheoremWithin}
     \numberlike{problem}{lemma}
     \labelformat{problem}{Problem~#1}

\theoremstyle{remark}
\newtheorem{remark}{Remark}
     \numberwithin{remark}{\DefaultNumberTheoremWithin}
     \numberlike{remark}{lemma}
     \labelformat{remark}{Remark~#1}
\theoremstyle{remark}

\newtheorem{example}{Example}
     \numberwithin{example}{\DefaultNumberTheoremWithin}
     \numberlike{example}{lemma}
     \labelformat{example}{Example~#1}
     
     \newtheorem{notation}{Notation}
     \numberwithin{notation}{\DefaultNumberTheoremWithin}
     \numberlike{notation}{lemma}
     \labelformat{notation}{Notation~#1}

     \labelformat{case}{Case~#1}
     \numberwithin{case}{lemma}

     \labelformat{step}{Step~#1}
     \numberwithin{step}{lemma}
     
\theoremstyle{plain}
\newtheorem{THEO}{Theorem}
     
     \labelformat{THEO}{Theorem~#1}

\newtheorem{PROP}{Proposition}
     \numberlike{PROP}{THEO}
     \labelformat{PROP}{Proposition~#1}

\newtheorem{COR}{Corollary}
     \numberlike{COR}{THEO}
     \labelformat{Cor}{Corollary~#1}

     \numberlike{LEM}{THEO}
     \labelformat{LEM}{Lemma~#1}

\numberwithin{equation}{section}

% Set the format of labels for environments that don't have naturally
% defined labels.
\labelformat{equation}{(#1)}
\labelformat{figure}{Figure~#1}
\labelformat{chapter}{Chapter~#1}
\labelformat{appendix}{Appendix~#1}
\labelformat{section}{\S~#1}
\labelformat{subsection}{\S~#1}
\labelformat{subsubsection}{\S~#1}

\def\R{\mathbb R}
\def\Z{\mathbb Z}

\def\d{\partial}

\def\g{\gamma}
\def\b{\beta}
\def\a{\alpha}

\def\cP{{\mathcal P}}

\def\sI{{\mathsf I}}
\def\sM{{\mathsf M}}
\def\sR{{\mathsf R}}

\def\bfc{{\mathbf c}}
\def\sfy{{\mathsf{y}}}
\def\sfx{{\mathsf{x}}}
\def\crit{{\mathsf{crit}}}
\def\rank{{\mathsf{rank}}}
\def\bfOm{{\Omega}}
\def\paOm#1#2{{\Omega}_{#1,\,|\sim|' #2}}

\def\om{\omega}

\def\Gr{{\mathfrak G}}

%this stuff is to insert comments with \comment{whatever}
%need package{ifthen}

\begin{document}

\title[Real polynomials with constrained real divisors. I.  Fundamental groups] {Real polynomials with constrained real divisors. I.  Fundamental groups}    

\author[G.~Katz]{Gabriel Katz}
\address{MIT, Department of Mathematics, 77 Massachusetts Ave., Cambridge, MA 02139, U.S.A.}
\email {gabkatz@gmail.com }

\author[B.~Shapiro]{Boris Shapiro}
\address{Stockholm University, Department of Mathematics, SE-106 91
Stockholm, Sweden}
\email {shapiro@math.su.se }

\author[V.~Welker]{Volkmar Welker}
\address{Philipps-Universit\"at Marburg, Fachbereich Mathematik und Informatik, 35032 Marburg, Germany}
\email {welker@mathematik.uni-marburg.de}

\begin{abstract} 
  In the late 80s, V.~Arnold and V.~Vassiliev initiated the topological study 
  of the space of real univariate polynomials of a given 
  degree $d$ and with no real roots of multiplicity exceeding a given positive 
  integer. Expanding their studies, we consider the spaces
  $\cP^{\bfc \Theta}_d$ of real monic univariate  polynomials of degree 
  $d$ whose real divisors avoid sequences of root multiplicities, taken 
  from a given poset $\Theta$ of compositions which is closed under certain 
  natural combinatorial operations. 
	
  In this paper, we concentrate on the 
  fundamental group of $\cP^{\bfc \Theta}_d$ and of some related topological 
  spaces. We find explicit presentations for the groups 
	$\pi_1(\cP^{\bfc \Theta}_d)$ in terms of generators and relations and show that in a number of cases they are free 
  with rank bounded from above by a quadratic function in $d$. We also show
  that $\pi_1(\cP^{\bfc \Theta}_d)$ stabilizes for $d$ large.

  The mechanism 
  that generates $\pi_1(\cP^{\bfc \Theta}_d)$ has similarities with the
  presentation of the braid group as the fundamental group of the space 
  of complex monic degree $d$ polynomials with no multiple roots and with 
  the presentation of the fundamental group of certain ordered configuration
  spaces over the reals which appear in the work of Khovanov.

  We further show that the groups $\pi_1(\cP^{\bfc \Theta}_d)$ admit an 
  interpretation as special bordisms of immersions of $1$-manifolds into the 
  cylinder $\R \times S^1$, whose images avoid the tangency patterns from 
  $\Theta$ with respect to the generators of the cylinder. 
\end{abstract}

\date{\today}

\maketitle

\setcounter{page}{1}

\section{Introduction} \label{sec:intro}

\subsection{Motivation and Outline of Results}

In \cite{Ar}, V.~Arnold proved the following Theorems A--D, which were later 
generalized by V.~Vassiliev, see \cite{Va}.  These results are the main source 
of motivation and inspiration for our study. In the formulations of these 
theorems, we keep the original notation of \cite{Ar}, which we will abandon 
later on. In what follows, theorems, conjectures, etc., labeled by letters, 
are borrowed from the existing literature, while those labeled by numbers 
are hopefully new. 

\begin{THEO} \label{th:Arnold1} 
  The fundamental group of the space of smooth functions 
  $f : S^1 \rightarrow \R$ without critical points of multiplicity higher 
  than $2$ on a circle $S^1$ is isomorphic to the group of integers $\Z$. 
  
  The space of smooth functions $f : \R \rightarrow \R$ without critical 
  points of multiplicity higher than $2$ and which, for arguments $|x|>1$, 
  coincide either with $x$ or with $x^2$ also have the fundamental group $\Z$. 
\end{THEO}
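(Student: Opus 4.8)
The plan is to realize each of these spaces as the complement of a codimension-two discriminant in a contractible space and to read off its fundamental group from the geometry of that discriminant, in the style of Cerf theory and the Zariski--van Kampen method. Let $E$ be the affine --- hence contractible --- space of smooth functions in play: $C^{\infty}(S^1,\R)$ in the first case, and the affine subspace of $C^{\infty}(\R,\R)$ cut out by the prescribed behaviour ($x$ or $x^2$) on $\{|x|>1\}$ in the second. Let $\Sigma\subset E$ be the set of $f$ that have a critical point of multiplicity $\ge 3$, i.e.\ a point $p$ with $f'(p)=f''(p)=f'''(p)=0$; the space under consideration is then $F=E\setminus\Sigma$. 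Having such a point imposes three conditions on the one-parameter family of candidate points $p$, so $\Sigma$ is a closed, Whitney-stratified subset of $E$ of codimension two, whose open codimension-two stratum $\Sigma^{\circ}$ consists of functions with a single critical point of multiplicity exactly $3$ (an $A_3$-point, locally $\pm x^4$), all other critical points nondegenerate, and all critical values distinct. I would first record that $F$ is connected: any $f\in F$ can be joined inside $F$ to a standard model by repeatedly cancelling a lowest adjacent maximum--minimum pair and smoothing away isolated $x^3$-inflections, every such elementary move passing only through functions whose worst critical point is of multiplicity $\le 2$.

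By a transversality / general-position argument (the one underlying the computation of $\pi_1$ of the complement of a subvariety), $\pi_1(F)$ is generated by meridians of the connected components of $\Sigma^{\circ}$, modulo relations coming from the codimension-three strata of $\Sigma$. The components of $\Sigma^{\circ}$ are indexed by the sign $\pm$ of the leading term $\pm x^4$ at the degenerate point and by the number of the remaining (Morse) critical points --- all other data, in particular the position of that point and the relative order of the critical values, vary continuously inside $\Sigma^{\circ}$. The codimension-three strata are: (i) functions with one critical point of multiplicity $4$ (locally $\pm x^5$; the swallowtail stratum); (ii) a multiplicity-$3$ point together with a separate birth--death ($x^3$-)point; (iii) a multiplicity-$3$ point whose critical value coincides with another critical value. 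Stratum (ii) births a Morse pair next to a surviving degenerate point while keeping the function in $\Sigma$, so it identifies the meridians whose Morse-count differs by two; hence all same-sign meridians collapse to a single class $\mu^{\pm}$. Stratum (i) supplies the decisive relation: near such a point $E$ is locally $\R^3$ with coordinates the unfolding parameters $(a,b,c)$ of $x^5+ax^3+bx^2+cx$, and $\Sigma$ is the curve of those $(a,b,c)$ for which $5x^4+3ax^2+2bx+c$ acquires a root of multiplicity $\ge 3$ --- a cuspidal arc through the origin, whose complement in $\R^3$ has fundamental group $\Z$ and forces the $A_3^{+}$- and $A_3^{-}$-meridians on the two sides of the swallowtail to be identified, giving $\mu^{+}=\mu^{-}$ up to orientation. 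Stratum (iii) then adds nothing. So $\pi_1(F)$ is cyclic, generated by a single meridian $\mu$.

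To finish one must see that $\mu$ has infinite order. Take the component of $\Sigma^{\circ}$ consisting of functions with a single $A_3^{+}$-point and no other critical point; over it the normal two-plane bundle of $\Sigma^{\circ}$ is framed by the unfolding directions $(a,b)$ of $x^4+ax^2+bx$ based at that point, and dragging the point along the oriented circle $S^1$ (resp.\ along $(-1,1)$) carries this frame back to itself, so this piece of $\Sigma^{\circ}$ is co-oriented in $E$. Linking number with it is therefore a homomorphism $\pi_1(F)\to\Z$ sending $\mu$ to $\pm 1$ --- equivalently, the integer invariant of a loop $(f_t)_{t\in S^1}$ in $F$ obtained by spanning it with a generic disc in $E$ and counting, with signs, that disc's transverse intersections with the locus of $A_3^{+}$-functions. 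Hence $\pi_1(F)\cong\Z$. For the functions on $\R$ the argument is the same for each of the allowed boundary behaviours: $E$ is still a contractible affine space, $\Sigma$ is still a codimension-two stratified set with the same two-fold ($\pm$) structure on its top stratum, and it is non-empty since one can always insert a single $x^4$-point inside $(-1,1)$ compatibly with the constraints.

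The step I expect to be the genuine obstacle is the relation analysis, and within it the swallowtail computation (i): one must check that the codimension-three strata cut the free group on the meridians down to \emph{exactly} $\Z$, leaving no spare generators and introducing no torsion --- the absence of torsion being, conveniently, guaranteed afterwards by the co-orientability argument. A secondary, purely technical point is to make the transversality / general-position reduction rigorous in the Fr\'echet space $E$; this is routine but, as always in this circle of ideas, requires some care.
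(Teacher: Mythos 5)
First, a point of orientation: \ref{th:Arnold1} is quoted from Arnold \cite{Ar} as motivation; the paper itself contains no proof of it. The closest the paper comes is the finite-dimensional analogue \ref{cor:arnA}, deduced from the explicit presentation machinery of \ref{lem2.14} and \ref{main}(ii), where the group $\Z$ is extracted from wall-crossing generators (dual to the codimension-one strata) subject to the concrete relations of type (22) at the codimension-two strata that remain in the space. Your plan --- meridians of the $A_3$-discriminant $\Sigma$, relations read off codimension-three strata, then a linking-number homomorphism for infinite order --- is the classical discriminant-theoretic route and could in principle reprove Arnold's statement, but as written it has genuine gaps precisely at the places where the real work sits.

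The main gap is the passage from the meridian bookkeeping to ``$\pi_1(F)$ is cyclic''. The general-position argument only shows that $\pi_1(F)$ is \emph{normally} generated by meridians of the components of $\Sigma^{\circ}$; each intersection of a spanning disc with $\Sigma^{\circ}$ contributes a \emph{conjugated} meridian, with an uncontrolled conjugating path. Likewise, the local relations at the codimension-three strata identify meridians only up to conjugation and inversion, and a complete presentation must also include the relations coming from loops \emph{inside} the codimension-two strata (the $A_3$-point and the Morse configuration can travel around $S^1$, conjugating the meridian), which you never discuss. ``Normally generated by one element, all meridians conjugate'' does not imply $\Z$, nor even commutativity; the entire difficulty is to control the conjugators, which in the paper's polynomial model is exactly what the explicit type-(22) relations achieve in the proof of \ref{main}(ii) --- and \ref{extorsion} shows that the same kind of local data, handled loosely, can produce $\Z/2\Z$. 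Your enumeration of the components of $\Sigma^{\circ}$ is also incorrect: the relative order of the critical values is locally constant, so there are many more components than ``sign plus Morse count'', and the value-coincidence codimension-three strata are precisely what merges them --- they do not ``add nothing''. Finally, the infinite-order argument is flawed as stated: co-orienting a \emph{single} component of $\Sigma^{\circ}$ does not give a homomorphism, because the intersection number of a spanning disc with one component is not a homotopy invariant --- at the strata $A_3+A_2$, $A_4$ and the value coincidences a disc can be pushed across, trading intersections with one component for intersections with another. One must co-orient the whole $A_3$-locus consistently across all codimension-three strata (the swallowtail being the critical compatibility check) before ``total signed intersection'' descends to a homomorphism onto $\Z$. (Two smaller points: the component ``single $A_3^{+}$ point and no other critical point'' does not exist on $S^1$, nor for the boundary condition $f=x$; and the reduction of the Fr\'echet-space transversality to finite dimensions is not a footnote --- Arnold and Vassiliev effect it by polynomial approximation, which is where a result like \ref{cor:arnA} would actually enter such a proof.)
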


\begin{THEO} \label{th:Arnold2} 
  The latter fundamental group is naturally isomorphic to the group 
  $\mathcal B$ of $A_3$-cobordism classes of embedded closed plane curves 
  without vertical \footnote{In our convention,  the curves in the $tx$-plane 
  do not have inflections with respect to the coordinate line $\{t=const\}$.} 
  tangential inflections.  
\end{THEO}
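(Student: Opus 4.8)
The plan is to realise the isomorphism explicitly, by sending a loop of functions to the zero set of its fibrewise first derivative, drawn as a curve in the $tx$-plane. Write $\mathcal X$ for the function space in the statement, fix a basepoint $f_0\in\mathcal X$, and let $\gamma=\{f_t\}_{t\in S^1}$ be a loop in $\mathcal X$. First I would set $F(t,x):=f_t(x)$ on $S^1\times\R$ and, after a small homotopy of $\gamma$, assume the family is generic; then form the fibre-critical locus
\[
  C_\gamma:=\bigl\{(t,x)\in S^1\times\R:\ \partial_x F(t,x)=0\bigr\}.
\]
For a generic loop, $0$ is a regular value of $\partial_x F$ because $\partial_x F=\partial_t\partial_x F=\partial_x^2 F=0$ is three equations on the two-dimensional source and is therefore unsolvable; hence $C_\gamma$ is an embedded compact $1$-manifold. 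Its tangencies to the fibres $\{t\}\times\R$ are the zeros of $\partial_x^2 F$ along $C_\gamma$, i.e. the instants at which $f_t$ has a multiplicity-$2$ critical point; these are isolated and parabolic, since an inflectional (cubic) vertical tangency would force in addition $\partial_x^3 F=0$, an $A_3$-condition of codimension three that a generic loop avoids. Thus $C_\gamma$ is an embedded closed plane curve without vertical tangential inflections, and I would define $\Phi([\gamma]):=[C_\gamma]\in\mathcal B$.

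Next I would check that $\Phi$ is a well-defined homomorphism and is surjective. A generic homotopy of loops produces $G(s,t,x)=f_{s,t}(x)$ on $[0,1]\times S^1\times\R$ whose fibre-critical set $(\partial_x G)^{-1}(0)$ is again an embedded surface (the singular-point condition now has codimension four), restricts to $C_{\gamma_0}\sqcup C_{\gamma_1}$ on the two ends, and whose fold curve carries only isolated $A_3$-points — precisely an $A_3$-cobordism; so $\Phi$ descends to $\pi_1(\mathcal X,f_0)$. Concatenation of loops places the two critical curves over complementary arcs of $S^1$, i.e. forms their disjoint union, which is the operation on $\mathcal B$; traversing $\gamma$ backwards replaces $C_\gamma$ by its mirror image, and $C_\gamma$ together with its mirror image bounds, so $\mathcal B$ is a group and $\Phi$ is a homomorphism. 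For surjectivity I would use an explicit primitive: given an embedded closed curve $C\subset S^1\times(-1,1)$ without vertical tangential inflections, choose a smooth $\rho(t,x)$ that vanishes exactly on $C$ transversally, with $\partial_x\rho\ne 0$ along $C$ off its folds, and that agrees outside $[-1,1]$ with the derivative of the prescribed end behaviour; then $F(t,x):=\int_{-1}^{x}\rho(t,\xi)\,d\xi$ (shifted by a constant to meet the normalisation) has $\partial_x F=\rho$, so each $f_t:=F(t,\cdot)$ lies in $\mathcal X$ — the folds of $C$ being exactly its multiplicity-$2$ critical points and nothing worse — and $\gamma=\{f_t\}$ is a loop with $C_\gamma=C$, so $\Phi([\gamma])=[C]$.

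Injectivity is the heart of the matter, and I would prove it in two steps. \emph{(a) Rigidity of realisations.} For a fixed admissible configuration — a curve $C$, or a surface $W$ in the parametrised case — the space of loops in $\mathcal X$ with fibre-critical locus exactly $C$ (resp. $W$) is nonempty and contractible: between consecutive critical points such a function is strictly monotone with prescribed one-sided vanishing of its derivative at the endpoints, the admissible tuples of critical values cut out a convex region, and the realisation space deformation-retracts, fibrewise over $S^1$, onto the primitive model above. \emph{(b) From cobordism to homotopy.} If $\Phi([\gamma])=0$, then $C_\gamma$ bounds an $A_3$-cobordism $W\subset[0,1]\times S^1\times\R$ terminating on the trivial configuration; applying the primitive construction in the extra parameter $s$ yields $G(s,t,x)$ with fibre-critical locus $W$, so $s\mapsto G(s,\cdot,\cdot)$ is a homotopy of loops from a realisation of $C_\gamma$ — homotopic to $\gamma$ by (a) — to a loop of critical-point-free functions; the latter form a convex, hence contractible, subspace of $\mathcal X$, so this loop is nullhomotopic. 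Since $\mathcal B$ is abelian, a freely nullhomotopic loop is nullhomotopic based, and $[\gamma]=0$.

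The transversality bookkeeping in the first two steps is routine but must be carried out with care: one has to confirm that the codimension $\le 1$ degenerations of a one-parameter family of such loops are \emph{exactly} fold births/deaths and isolated $A_3$-events, with nothing that would break embeddedness — which is why it matters that $C_\gamma$ is cut out by the single function $\partial_x F$. I expect the genuine obstacle to be the rigidity lemma (a): one must keep every deformation inside $\mathcal X$, never manufacturing a forbidden multiplicity-$\ge 3$ critical point, and must align the basepoint conventions so that the trivial end of the null-cobordism is realised by a loop that is actually contractible in $\mathcal X$ — this last point is precisely where Arnold's normalisation of the functions for $|x|>1$ does its work.
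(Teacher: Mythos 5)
You should know at the outset that the paper contains no proof of \ref{th:Arnold2}: results labelled by letters are quoted from the literature (here Arnold \cite{Ar}), and the notion of $A_3$-cobordism is not even defined in this text. The only meaningful comparison is therefore with the paper's own polynomial analogue, \ref{cobordism} together with \ref{LIFT}, and on that score your plan is essentially the same strategy transplanted to the function setting: a loop is sent to its fibrewise critical/tangency locus in the cylinder; realization of a prescribed locus by an actual family comes from local normal forms plus the convexity of the fibrewise space of admissible functions (your ``rigidity'' step (a) is exactly the convexity/partition-of-unity mechanism of \ref{LIFT}); a cobordism is converted into a homotopy of loops by lifting the surface; and triviality at the empty end uses the contractibility of the stratum of functions with no critical points, just as the paper uses the cell $\mathring{\sR}_d^{()}$ of polynomials with no real roots.

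There are, however, two genuine soft spots. First, the bordism relation itself: your gloss that the cobordism surface ``carries only isolated $A_3$-points'' is inconsistent with your own injectivity argument. A homotopy of loops takes place inside $\mathcal X$, so no $f_{s,t}$ ever has a critical point of multiplicity $3$, and the resulting surface has \emph{no} $A_3$ tangencies with the vertical lines; conversely, if the cobordism relation did permit isolated $A_3$ events, then lifting such a cobordism in step (b) would produce a two-parameter family that leaves $\mathcal X$ precisely at those instants, and injectivity would not follow. You must pin down the definition so that tangencies of multiplicity $\geq 3$ are forbidden throughout the cobordism (this is exactly the third bullet of \ref{bordism} in the paper's analogue) and then prove both directions with that single definition; as written, the well-definedness half and the injectivity half use incompatible readings of ``$A_3$-cobordism.''

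Second, basepoint and normalization. A compact embedded plane curve meets every vertical line with even total multiplicity, so your dictionary matches only the component of $\mathcal X$ normalized by $x$, where loops can be based at a critical-point-free function and the critical locus misses the base fibre; in the $x^2$-component every function has a critical point, the critical locus of a loop meets every fibre with odd multiplicity and wraps around the cylinder, and the ``trivial end'' of a null-cobordism cannot be realized by a loop of critical-point-free functions at all. Relatedly, your surjectivity primitive $F(t,x)=\int_{-1}^{x}\rho(t,\xi)\,d\xi$ only lands in $\mathcal X$ if $\int_{-1}^{1}\rho(t,\xi)\,d\xi$ equals the prescribed increment of the normalization for every $t$; this extra affine constraint on $\rho$ is not addressed, while the corresponding issue in \ref{LIFT} is handled by the parity hypothesis and multiplication by powers of $x^2+1$. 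Both points are repairable, but they are exactly where the quoted analogue does real work, so they need to be carried out rather than asserted.
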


The generator of $\mathcal B$ is shown as the kidney-shaped loop in 
~\ref{fig:kidney}(a).

\begin{remark} 
  The multiplication of the cobordism classes in $\mathcal B$  is defined as  
  the disjoint union of curves, embedded in the half-planes 
  $\{(t,x)|\, t < 0\}$ and $\{(t,x)|\, t > 0\}$, 
  and the inversion is the change of sign of $t$.  
\end{remark}
  
For $1 \leq k \leq d$, let $G_k^d$ be the space  of 
real monic polynomials $x^d + a_{d-1}x^{d-1} + \cdots + a_0 \in \R[x]$
with no real roots of multiplicity greater than $k$.

\begin{THEO}\label{th:Arnold3}  
  If $k < d < 2k+1$, then $G_k^d$ is diffeomorphic to the product of a 
  sphere $S^{k-1}$ by an Euclidean space. In particular,  for all $i$ and $k < d < 2k+1$,
  $$\pi_i(G_k^d)\simeq \pi_i(S^{k-1}) $$
\end{THEO}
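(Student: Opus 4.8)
The plan is to exhibit $G_k^d$ explicitly as a vector bundle (in fact a trivial one) over $S^{k-1}$, from which the homotopy equivalence with $S^{k-1}$ — and hence the isomorphism on all homotopy groups — follows immediately. The key point is that when $k < d < 2k+1$, a real monic polynomial of degree $d$ can have at most one real root of multiplicity $\geq k$: two such roots would force the degree to be at least $2k > d-1$, actually at least $2k \geq d+1$ once $d < 2k+1$ is used carefully, so there can be at most one ``bad'' root, and near the boundary of $G_k^d$ we must delete exactly those polynomials with a root of multiplicity $\geq k+1$.

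\smallskip

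Concretely, I would first set up coordinates. Factor a polynomial $p \in \R[x]$ monic of degree $d$ uniquely as $p(x) = (x-r)^k \cdot q(x)$ where $r \in \R$ and $q$ is monic of degree $d-k$; this is not quite a global parametrization, so instead I would argue as follows. Consider the map $\Phi \colon G_k^d \to \R^?$ recording, for each $p$, the ``worst'' real root together with the complementary factor. The locus where $p$ has \emph{no} real root of multiplicity exactly $k$ is an open dense piece, and the subtle part is understanding the stratum where a multiplicity-$k$ root is present. Because $d < 2k+1$ forbids a second root of multiplicity $k$ or higher, on the closure $\overline{G_k^d}$ (polynomials with no real root of multiplicity $> k$, i.e.\ of multiplicity $\geq k+1$) the set of polynomials having a real root of multiplicity \emph{exactly} $k$ is parametrized by $(r, q)$ with $r \in \R$ and $q$ monic of degree $d-k$ having $r$ not as a root. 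I would then build a deformation retraction of $G_k^d$ onto a standard model.

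\smallskip

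The cleanest route: identify $G_k^d$ with the complement, inside an affine space, of the discriminant-type subvariety $\Sigma$ of polynomials possessing a real root of multiplicity $\geq k+1$. Since $d < 2k+1 < 2(k+1)$, such a polynomial has \emph{exactly one} real root of high multiplicity, and $\Sigma$ is a smooth submanifold diffeomorphic to $\R \times (\text{monic degree } d-k-1 \text{ polys}) \cong \R^{d-k}$, embedded in $\R^d$ with codimension $k$. The normal bundle / tubular neighborhood structure then shows that $G_k^d = \R^d \setminus \Sigma$ deformation retracts onto the boundary of a tubular neighborhood, which is an $S^{k-1}$-bundle over $\R^{d-k} \simeq *$, hence trivial: $G_k^d \simeq S^{k-1} \times \R^{d-k+1}$ up to homotopy, even diffeomorphism after checking the bundle is trivial (it is, as the base is contractible). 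The statement $\pi_i(G_k^d) \simeq \pi_i(S^{k-1})$ is then immediate.

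\smallskip

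\textbf{Main obstacle.} The delicate step is verifying that $\Sigma$ really is an embedded smooth submanifold with a well-behaved (trivial) normal sphere bundle, and — prior to that — nailing down that the inequality $d < 2k+1$ precisely guarantees ``at most one bad real root.'' One must be careful that a polynomial could have one real root of multiplicity exactly $k$ and simultaneously be a limit of polynomials with a root of multiplicity $k+1$; tracking how these strata fit together, and checking transversality so that the tubular-neighborhood argument applies, is where the real work lies. An alternative that sidesteps some of this is to write down an explicit homotopy equivalence by ``spreading out'' the high-multiplicity root into $k$ simple real roots confined to a small interval, reducing to an ordered-configuration computation; but the bundle argument is more transparent and is what I would pursue first.
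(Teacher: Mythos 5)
This statement is the paper's Theorem C: it is quoted verbatim from Arnold's paper \cite{Ar} and the present paper contains no proof of it, so there is nothing to compare your argument with except the statement itself; I can only assess your proposal on its own terms. Your guiding intuition --- that $G_k^d$ is the complement of a contractible ``high multiplicity'' locus of codimension $k$, whose link should be $S^{k-1}$ --- is the right heuristic, and your dimension count for $\Sigma$ (codimension $k$) is correct. But the route you call cleanest has genuine gaps, not just delicate steps. First, a numerical slip: $d<2k+1$ gives $2k\ge d$, not $2k\ge d+1$, and for $d=2k$ a polynomial of $G_k^d$ \emph{can} have two real roots of multiplicity exactly $k$ (e.g.\ $(x-1)^k(x+1)^k$); what the hypothesis really yields is at most one real root of multiplicity $\ge k+1$, which is the version you use later, so this is repairable. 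Second, and more seriously, $\Sigma$ is in general \emph{not} a smooth submanifold: the parametrization $(r,q)\mapsto (x-r)^{k+1}q(x)$ is a proper continuous bijection onto $\Sigma$, but its differential drops rank exactly where $q(r)=0$, i.e.\ along the locus of roots of multiplicity $\ge k+2$, which is nonempty whenever $d\ge k+2$ (already for $k=2$, $d=4$ the set of quartics with a triple real root has a cuspidal edge along the fourth powers $(x-r)^4$). So ``smooth submanifold diffeomorphic to $\R^{d-k}$'' is false in the allowed range, and the tubular-neighborhood machinery is simply unavailable as stated; one would need a stratified or conical-structure argument in its place.

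Third, even granting smoothness, the step ``$\R^d\setminus\Sigma$ deformation retracts onto the boundary of a tubular neighborhood of $\Sigma$'' is not a general principle but a global statement about how $\Sigma$ sits in $\R^d$: the complement of an unknotted circle in $\R^3$ is not homotopy equivalent to the tube boundary $T^2$, and the complement of a smooth, properly embedded, contractible copy of $\R$ in $\R^3$ need not even be homotopy equivalent to $S^1$ if the embedding is knotted at infinity. Contractibility and properness of $\Sigma$ therefore do not suffice; taming the pair $(\R^d,\Sigma)$ globally is precisely the content of Arnold's theorem, so as written this step assumes what must be proved. Finally, the diffeomorphism $G_k^d\cong S^{k-1}\times\R^{d-k+1}$ cannot be extracted from triviality of a normal bundle over a contractible base: the sphere bundle of a tube around $\Sigma$ is $(d-1)$-dimensional, so even a successful retraction would only give a homotopy equivalence, while the theorem asserts a diffeomorphism of $d$-manifolds; this requires an explicit global construction (as in \cite{Ar}, see also \cite{Va}), or at least a separate argument. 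In short, the obstacle you flag at the end is the actual mathematical content of the theorem, and the two pillars of your proposed shortcut (smoothness of $\Sigma$, and the retraction-onto-tube-boundary principle) are respectively false and unjustified, so the proposal does not close.
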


An analogous result holds for the space of polynomials whose sum of roots 
vanishes, i.e., polynomials with the vanishing coefficient $a_{d-1}$.
	
\begin{THEO}\label{th:Arnold4}
  The homology groups 
  with integer coefficients of the space $G_k^d$ 
  are nonzero only for dimensions which are the multiples of $k - 1$ and 
  less or equal to $d$. For $(k-1)r \leq d$, we have 
  $$H_{r(k-1)}(G_k^d) \simeq \Z.$$
\end{THEO}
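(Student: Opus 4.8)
The plan is to view $G^d_k$ as the open set $\R^d\setminus\Sigma$, where $\R^d$ is the space of monic degree-$d$ polynomials in the coefficient coordinates $(a_0,\dots,a_{d-1})$ and $\Sigma\subset\R^d$ is the set of those having a real root of multiplicity $>k$, and to compute $H_*(G^d_k)$ via Alexander duality together with Vassiliev's conical resolution of $\Sigma$. (I will assume $k\ge2$, which is needed for the connectedness part of the conclusion.) First I would note that $\Sigma$ is closed: if $P_n\to P$ with $P_n(a_n)=P_n'(a_n)=\dots=P_n^{(k)}(a_n)=0$, the $a_n$ stay in a fixed compact set and any accumulation point $a$ satisfies the same $k+1$ equations for $P$. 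Since $\R^d$ is an oriented manifold, Alexander duality yields $\widetilde H_i(G^d_k)\cong\bar H_{\,d-1-i}(\Sigma)$, with $\bar H_*$ the Borel--Moore homology, so it suffices to compute $\bar H_*(\Sigma)$.

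Next I would apply Vassiliev's machinery. Choose a generic embedding $\phi\colon\R\hookrightarrow\R^M$ with $M\gg0$, so that images of any $j\le j_{\max}:=\lfloor d/(k+1)\rfloor$ points span a nondegenerate simplex; attach to $P\in\Sigma$ its finite set $L(P)$ of real roots of multiplicity $>k$ (which has at most $j_{\max}$ elements); and form the conical resolution $\widetilde\Sigma$, swept out by the simplices $\{P\}\times\Delta(\phi(L(P)))$. By Vassiliev's theory the proper projection $\widetilde\Sigma\to\Sigma$ identifies $\bar H_*(\widetilde\Sigma)$ with $\bar H_*(\Sigma)$. I would then use the filtration $F_1\subset\cdots\subset F_{j_{\max}}=\widetilde\Sigma$, where $F_j$ is the locus lying over faces with at most $j$ vertices, and observe that the stratum $U_j:=F_j\setminus F_{j-1}$ is homeomorphic to the product of the open chamber $\{a_1<\cdots<a_j\}\subset\R^j$, the open simplex $\mathrm{int}\,\Delta^{j-1}$, and the affine space $\R^{\,d-j(k+1)}$ of cofactors $R$ in $P=(x-a_1)^{k+1}\cdots(x-a_j)^{k+1}R(x)$. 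The essential point here is that $\R$ is ordered, so the marked roots can be labelled continuously; this makes the relevant simplex bundle globally trivial, hence orientable. Consequently $U_j$ is an open cell of dimension $N_j:=j+(j-1)+(d-j(k+1))=d-1-j(k-1)$, so $\bar H_*(U_j)$ is $\Z$ concentrated in degree $N_j$ for $1\le j\le j_{\max}$.

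Finally I would run the spectral sequence of the filtration, $E^1_{j,\bullet}=\bar H_\bullet(U_j)\Rightarrow\bar H_*(\Sigma)$, all of whose differentials lower total degree by $1$. A nonzero differential would send the unique nonzero group at some level $a$, which lies in total degree $N_a$, down to a nonzero group at a strictly smaller level $b$, which lies in total degree $N_b$; the image lands in degree $N_a-1$, forcing $N_b=N_a-1$, i.e.\ $N_a-N_b=1$. But $N_a-N_b=(b-a)(k-1)<0$ for $b<a$, a contradiction, so the spectral sequence degenerates at $E^1$. Hence $\bar H_m(\Sigma)=\Z$ for $m\in\{N_j:1\le j\le j_{\max}\}$ and $0$ otherwise, and Alexander duality (with $\Sigma$ of codimension $d-N_1=k\ge2$ giving connectedness of $G^d_k$) converts this into
\[
H_i(G^d_k)\cong\Z\quad\text{for }\ i\in\{0,\,k-1,\,2(k-1),\dots,\,j_{\max}(k-1)\},\qquad H_i(G^d_k)=0\ \text{ otherwise.}
\]
This is the asserted pattern: the homology is $\Z$ in the dimensions that are multiples of $k-1$, up to the bound dictated by the maximal number $j_{\max}=\lfloor d/(k+1)\rfloor$ of roots of multiplicity $>k$ in a degree-$d$ polynomial.

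The step I expect to be the real obstacle is the construction of the resolution: justifying that $\widetilde\Sigma\to\Sigma$ is proper despite the jumps of $L(P)$ under specialization, that it induces the claimed isomorphism on Borel--Moore homology, and that its filtration is compatible with the genericity of $\phi$; and, within that, checking that each $U_j$ is really a \emph{trivial} (so in particular \emph{orientable}) cell bundle over the configuration space, which is where the order structure of $\R$ enters essentially and where the analogous computation over $\C$ would behave very differently. Granting this infrastructure, the rest --- the dimension count $\dim U_j=d-1-j(k-1)$ and the automatic collapse of the spectral sequence --- is forced by the arithmetic of the $N_j$ and requires no further input.
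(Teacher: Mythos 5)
The paper does not actually prove this statement: \ref{th:Arnold4} is quoted from Arnold \cite{Ar} (with \cite{Va} cited for generalizations) purely as motivation, so there is no internal proof to compare against. Your route --- Alexander duality $\widetilde H^i(\R^d\setminus\Sigma)\cong \bar H_{d-1-i}(\Sigma)$ combined with a simplicial (conical) resolution of the discriminant $\Sigma$, filtered by the number of marked roots of multiplicity $>k$ --- is exactly Vassiliev's method, and granting the standard infrastructure you explicitly delegate (properness of $\widetilde\Sigma\to\Sigma$, the Borel--Moore isomorphism for proper maps with contractible fibers, closedness of the filtration), the computation is sound: each stratum $U_j$ is an open cell of dimension $d-1-j(k-1)$ (the ordering of $\R$ indeed trivializes the relevant bundle), the $E^1$ page has a single $\Z$ per column in pairwise distinct total degrees, the differentials vanish for the arithmetic reason you give, and there are no extension problems since each total degree receives at most one contribution. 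Two minor points: the duality in this form pairs \emph{cohomology} of the complement with Borel--Moore homology of $\Sigma$, so passing to homology needs the (automatic, by freeness and finite generation) universal-coefficient step; and Arnold's own argument was of a different, more explicitly geometric nature, so your proof is really Vassiliev's --- which is a legitimate provenance given the paper's citations.

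More importantly, what you prove is not literally the printed statement, and here you are right and the printed statement is defective: your computation gives $H_{r(k-1)}(G_k^d)\cong\Z$ exactly for $0\le r\le\lfloor d/(k+1)\rfloor$, i.e.\ under the condition $r(k+1)\le d$, whereas the theorem as printed asserts it for all $r$ with $r(k-1)\le d$. The printed range is inconsistent with \ref{th:Arnold3}: for $d=4$, $k=2$, that theorem gives $G_2^4\cong S^1\times\R^3$, hence $H_2(G_2^4)=0$, while the printed range would force $H_2(G_2^4)\cong\Z$; similarly for $G_2^5$ the discriminant is homeomorphic to $\R^3$, so only $H_0$ and $H_1$ are nonzero. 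So the condition ``$(k-1)r\le d$'' is evidently a misquotation of Arnold's bound ``$(k+1)r\le d$''; your argument establishes the corrected statement, and the discrepancy lies in the quoted statement, not in your proof.
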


The main goal of this paper and its sequel \cite{KSW} is to generalize 
Theorems A--D to the situation where the multiplicities of the real roots 
\emph{avoid a given set of patterns} $\Theta$. In our more general situation, 
the fundamental group of such polynomial spaces can be non-trivial and deserves 
a separate study, which is carried out below. We will see that the mechanism 
by which these fundamental groups are generated is in some sense similar to the one 
that produces the braid groups as the fundamental groups of spaces of complex 
degree $d$ monic polynomials with no multiple roots. The mechanism is also 
related to Khovanov's paper \cite{Kh}, which studies the topology of spaces $K_d$, obtained from the space $\R^d$ by 
removing vectors $(x_1, \ldots, x_d)$ such that $x_j = x_k = x_l$ for some distinct $j, k, l$, or/and 
vectors of the form $x_j = x_k$ and  $x_l = x_m$ for some distinct $j, k, l, m$.  

This space is an ordered analog of the space of real 
polynomials containing the polynomials with only real roots, no roots of 
multiplicity $3$ or higher, and no two different roots both of multiplicity $2$ 
or higher. \ref{figure0} shows two contrasting  images of 
objects from \cite{Kh} and from the present paper, each representing a \emph{loop} 
in the relevant configuration space.  

\begin{figure}

\begin{center}
\includegraphics[width=0.8\textwidth]{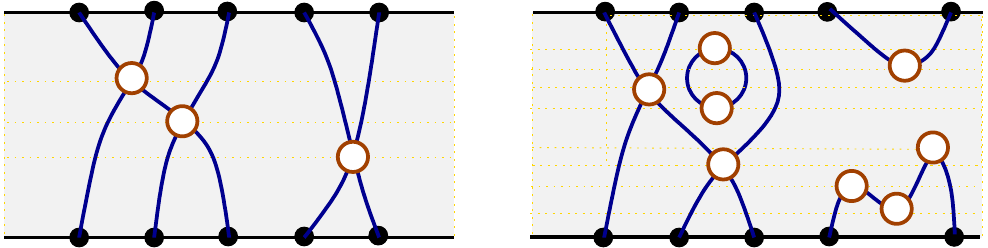}%
\end{center}
~\vskip-5.6cm
\setlength{\unitlength}{3947sp}%
\begingroup\makeatletter\ifx\SetFigFont\undefined%
\gdef\SetFigFont#1#2#3#4#5{%
  \reset@font\fontsize{#1}{#2pt}%
  \fontfamily{#3}\fontseries{#4}\fontshape{#5}%
  \selectfont}%
\fi\endgroup%
\begin{picture}(7962,2829)(1168,-3001)
\put(3200,-2986){\makebox(0,0)[lb]{\smash{{\SetFigFont{12}{14.4}{\rmdefault}{\mddefault}{\updefault}{\color[rgb]{0,0,0}(a)}%
}}}}
\put(6500,-2986){\makebox(0,0)[lb]{\smash{{\SetFigFont{12}{14.4}{\rmdefault}{\mddefault}{\updefault}{\color[rgb]{0,0,0}(b)}%
}}}}
\end{picture}%
\caption{(a) A graphic representation of a loop in the Khovanov's space $K_5$ and (b) in  the space $\mathcal P_7^{\mathbf c \Theta}$ of real degree $7$ polynomials with no real roots of multiplicity $\geq 3$ and no pairs of real roots of multiplicity $\geq 2$. 
} 
\label{figure0}
\end{figure}

\smallskip

Besides the studies of V.~Arnold \cite{Ar} and V.~Vassiliev \cite{Va}, 
the second major motivation for this paper comes from results of the 
first author connecting the cohomology of spaces of real polynomials that avoid 
certain patterns of root multiplicities with  characteristic classes, arising in the theory of traversing flows, see \cite{Ka}, \cite{Ka1}, and \cite{Ka3}. For traversing vector flows on 
compact manifolds $X$ with boundary $\d X$ and with a priori forbidden 
tangency patterns of their trajectories to $\d X$, the spaces  
of polynomials avoiding the same patterns play a fundamental role which is similar to the one played by
Gra\ss mannians in the category of vector bundles.

Let $\cP_d$ denote the space of real monic univariate polynomials of degree 
$d$. Given a polynomial $P(x) = x^d + a_{d-1}x^{d-1} + \cdots + a_0 \in \cP_d$, 
we define its \emph{real divisor} $D_\R(P)$ as the multiset 
$$\underbrace{x_1 = \cdots = x_{i_1}}_{\omega_1 = i_1} < 
\underbrace{x_{i_1+1} = \cdots = x_{i_1+i_2}}_{\omega_2 = i_2-i_1} <
\cdots < \underbrace{x_{i_{\ell-1} +1} = \cdots = x_{i_\ell}}_{\omega_\ell
= i_\ell-i_{\ell-1}}$$
of the real roots of $P(x)$. The tuple 
$\om = (\omega_1 ,\ldots, \omega_\ell)$ is called 
the (ordered) \emph{real root multiplicity pattern} of $P(x)$. 
Let $\mathring{\sR}^\omega_d$ be  the set of all polynomials with root multiplicity pattern $\omega$, and let 
$\sR_d^\omega$ be the closure of $\mathring{\sR}^\omega_d$ in $\cP_d$. 

For a given collection $\Theta$ of root multiplicity patterns, we 
consider the union $\cP_d^\Theta$ of the subspaces $\mathring{\sR}_d^{\omega}$
over all $\omega \in \Theta$. We denote by $\cP_d^{\bfc\Theta} := \cP_d \setminus \cP^\Theta_d$ its complement. We restrict our studies to the case when 
$\cP_d^\Theta$ is closed in $\cP_d$ and call such $\Theta$ \emph{closed}.

We observe in \ref{lm2} that, for any $\Theta$ containing the minimal element $(d)$ (and thus for any closed $\Theta$), the space
$\cP_d^\Theta$ is contractible. Thus it makes more sense to consider 
its one-point compactification $\bar{\cP}_d^\Theta$. For a closed $\Theta$, 
the latter is the 
union of the one-point compactifications $\bar{\sR}_d^\omega$ of the 
$\sR_d^\omega$ for $\omega \in \Theta$ with the  identified points at infinity. 
By Alexander duality in $\bar{\cP}_d \cong S^{d}$, 
we get the relation  
$$\bar H^j(\cP_d^{\bfc\Theta}; \Z) \approx \bar H_{d - j -1}(\bar{\cP}_d^{\Theta}; \Z)$$
in reduced (co)homology.  
This  implies that the spaces $\cP_d^{\bfc\Theta}$ and $\bar{\cP}_d^{\Theta}$ 
carry equivalent (co)homological information.

\begin{example}
  For $\Theta$ comprising all $\om$'s with at least one component greater
  than or equal to $k$, we have that $\Theta$ is closed and
  $\cP_d^{\bfc\Theta} \cong G_k^d$
  (see  \ref{th:Arnold3} and  \ref{th:Arnold4}).
\end{example}

In this paper and its sequel \cite{KSW}, for a closed $\Theta$, we aim at describing the topology of
$\bar{\cP}^\Theta$ and $\cP^{\bfc \Theta}$
in terms of combinatorial properties of $\Theta$.

\medskip 

\noindent 
\emph{Outline of the results:} 
In \ref{sec:fundamental} we prove our main results, namely, \ref{lem2.14}, \ref{main} and \ref{corasym}, generalizing
\ref{th:Arnold3} in the case $k = 2$. \ref{lem2.14} gives a presentation of 
$\pi_1(\cP_d^{\bfc\Theta})$ by generators and relations while \ref{main} states that
under certain conditions on $\Theta$,  
$\pi_1(\cP_d^{\bfc\Theta})$ is a \emph{free group}. In \ref{corasym} we show that $\pi_1(\cP_d^{\bfc\Theta})$ stabilizes for $d$ large. 
In \ref{cor:arnA}, we provide an analog of \ref{th:Arnold1} and show that the 
fundamental group of the space of real 
monic polynomials of a fixed odd degree $d > 1$ with no real critical points 
of multiplicity higher than $2$ is isomorphic to $\Z$. 

Additionally, when $\Theta$ consists of all $\om$'s for which 
$\sR_d^\omega$ has codimension $\geq 2$,
we show in \ref{lm2b}, without using the results from \cite{KSW}, that
the number of generators of the free group $\pi_1(\cP_d^{\bfc\Theta})$ is 
equal to $\frac{d(d-2)}{4}$ for an even $d$, and to $\frac{(d-1)^2}{4}$ for 
an odd $d$. Moreover, we show that in this case 
$\cP_d^{\bfc\Theta}$ is homotopy equivalent to a wedge of circles and hence is 
a $K(\pi, 1)$-space.

In \ref{sec:cobordism} a special cobordism theory which allows us to formulate and 
prove an analog of \ref{th:Arnold2} is developed.  \ref{cobordism} claims 
that the free group $\pi_1(\cP^{\bfc \Theta}_d)$, where $\Theta$ consists of all $\om$'s for which $\sR_d^\omega$ has codimension $\geq 2$, 
admits an 
interpretation as special \emph{bordisms of immersions} of $1$-manifolds into 
the cylinder $\R \times S^1$, i.e., immersions whose images avoid the tangency 
patterns from $\Theta$ with respect to the generators of the cylinder.

%We close the paper in \textcolor{red}{\ref{sec:combcomplex}} with open questions and a description of further research direction. 

\subsection{Cell structure on the space of real univariate polynomials}
\label{sec:cellstructure}

First we recall a well-known stratification of the space of real univariate polynomials of a given degree.  

For any  real polynomial $P(x)$, we have already defined 
its real divisor $D_\R (P)$, i.e. the ordered set of its real zeros, counted with their multiplicities.

We have also associated to a polynomial $P(x) \in \R[x]$ and $D_\R(P)$ its 
real root multiplicity pattern $\om_P := (\omega_1,\ldots, \omega_\ell)$.
The combinatorics of multiplicity patterns will play the key role in our
study. Let us fix some terminology and notation for multiplicity
patterns.

\begin{definition}
 A sequence $\om=(\om_1,\dots, \om_\ell)$ of positive 
 integers is called a {\it composition} of the number $|\om| := \om_1 + \cdots 
 +\om_\ell$. We call $|\om|$ the \emph{norm} of $\om$. (We  allow the \emph{empty composition} $\om = ()$, whose norm $ |()| = 0$). 
 
	We call the number $|\om|' :=  (\om_1 -1) + \cdots 
 +(\om_\ell -1)$ the \emph{reduced norm} of $\om$.
 \end{definition}

Evidently, for a given composition $\om$, the stratum $\sR^\omega_d$ is empty 
if and only if either $|\om | > d$, or $|\om | \leq d$ and 
$|\om | \not \equiv d \mod 2$. 

\smallskip

\begin{notation}
  We denote by $\bfOm$ the set of all compositions of natural numbers.  
  For a given positive integer $d$,  we write $\bfOm_{\langle d]}$ for the 
  set of all $\om \in \bfOm$, such that $|\om| \leq d$ and $|\om|
  \equiv d \mod 2$. 
  We denote by $\paOm{\langle d]}{\geq \ell}$ the subset of 
  $\bfOm_{\langle d]}$, 
  consisting of all compositions $\om \in \bfOm_{\langle d]}$ 
  with $|\om|' \geq \ell$. 
  Analogously, we define $\paOm{\langle d]}{= \ell}$ as the subset of 
  $\bfOm_{\langle d]}$, 
  consisting of all compositions $\om \in \bfOm_{\langle d]}$ 
  with $|\om|' = \ell$. 
\end{notation}

Now we define two (sequences of) operations on $\bfOm$ 
that will govern our subsequent considerations, see also \cite{Ka}.

\smallskip

The \emph{merge operation} 
$\sM_j: \bfOm \to \bfOm$ \;  
sends $\om = (\om_1,\ldots , \om_\ell)$ to the composition $$\sM_j(\om) = (M_j(\om)_1,
\ldots, M_j(\om)_{\ell-1}),$$ 
where, for any $j \geq \ell$, one has $\sM_j(\om) = \om$,  and for $1 \leq j < \ell$, one has 
\begin{eqnarray}\label{eq2.1a} 
\sM_j(\omega)_i & = & \omega_i \; \textrm{ if }\, i < j,\\ \nonumber
\sM_j(\omega)_j  & = & \omega_j + \omega_{j+1},\\ \nonumber
\sM_j(\omega)_i  & = & \omega_{i + 1} \; \textrm{ if }\, i+1 < j \leq  \ell-1.
\end{eqnarray}

\medskip
Similarly, we define the \emph{insertion operation} 
$\sI_j: \bfOm \to \bfOm$ that 
sends $\om = (\om_1,\ldots , \om_\ell)$ to the composition $\sI_j(\om) = (I_j(\om)_1,
\ldots, I_j(\om)_{\ell+1}),$ where for any $j > \ell+1$,  one has $\sI_j(\om) = \om$, and  for $1 \leq j \leq \ell+1$, one has 
\begin{eqnarray}\label{eq2.2a}  
\sI_j(\omega)_i & = & \omega_i\; \textrm{ if }\, i < j, \\ \nonumber
\sI_j(\omega)_j & = & 2,\\ \nonumber
\sI_j(\omega)_i & = & \omega_{i - 1} \; \textrm{ if }\, j \leq i \leq \ell+1. 
\end{eqnarray}

\medskip 
The next proposition collects some basic properties of $\sR^\omega_d$, see 
\cite[Theorem 4.1]{Ka} for details.

\begin{PROP}\label{lm1}
  Take $d \geq 1$ and $\om = (\om_1,\ldots, \om_\ell) \in \bfOm_{\langle d]}$. 
  Then $\mathring{\sR}^\omega_d\subset \cP_d$ is an (open) cell of 
  codimension $|\om|'$. 
  Moreover, 
	$\sR^\omega_d$ is the union of the cells 
	$\{\mathring{\sR}_d^{\omega'}\}_{\omega'}$, 
  taken over all $\omega'$ that are obtained from 
  $\omega$ by a sequence of merging and insertion operations.
  In particular,
  \begin{itemize}
	  \item[(a)] the cell $\mathring{\sR}^\om_d$ has (maximal) dimension $d$ if and only
       if $\om=(\underbrace{1,1,\dots, 1}_{\ell})$ for 
       $0 \leq \ell \leq d$ and $\ell \equiv d \mod 2$, \smallskip
       
     \item[(b)] the cell $\mathring{\sR}^\om_d$ has dimension $1$ if
	     and only if $\om=(d)$. In this case, $\mathring{\sR}^{(d)} = 
	     \sR^{(d)} = \{ (x-a)^d~|~a \in \R\}$. 
  \end{itemize}
\end{PROP}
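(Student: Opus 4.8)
The plan is to establish \ref{lm1} by working directly with the standard stratification of $\cP_d$ by real root multiplicity patterns, and translating the geometry of ``colliding roots'' into the combinatorics of the merge and insertion operations. First I would recall the normalization map sending a polynomial to its (unordered, but linearly ordered on the real line) real divisor together with the complex-conjugate pairs of non-real roots; up to a real-analytic change of coordinates this identifies $\cP_d$ with $\R^d$ and the stratum $\mathring{\sR}^\om_d$ with the locus where the real roots occur with the prescribed multiplicities $\om_1 < \om_2 < \cdots$ in the prescribed order and all remaining $d - |\om|$ roots are non-real. To see this stratum is an open cell of codimension $|\om|' := |\om| - \ell$, I would parametrize it: one chooses $\ell$ distinct real numbers $a_1 < \cdots < a_\ell$ (the real roots, an open cone in $\R^\ell$, hence contractible of dimension $\ell$), and then $\lfloor (d - |\om|)/2 \rfloor$ complex conjugate pairs, contributing another $d - |\om|$ real parameters living in an open contractible set; altogether the dimension is $\ell + (d - |\om|) = d - (|\om| - \ell) = d - |\om|'$, and one checks the parametrization is a diffeomorphism onto its image, so $\mathring{\sR}^\om_d$ is an open cell of the claimed codimension. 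This already gives parts (a) and (b): the codimension is $0$ exactly when $|\om|' = 0$, i.e.\ every $\om_i = 1$, with $\ell \equiv d \bmod 2$ forced by nonemptiness; and the dimension is $1$ exactly when $\ell = 1$ and $\om_1 = d$, in which case the single real root is $a$ and there are no non-real roots, so the stratum is literally $\{(x-a)^d\}$, already closed.

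The substantive part is the description of the closure $\sR^\om_d = \overline{\mathring{\sR}^\om_d}$ as the union of the cells $\mathring{\sR}^{\om'}_d$ where $\om'$ ranges over all compositions reachable from $\om$ by a finite sequence of merges $\sM_j$ and insertions $\sI_j$. I would argue by a limiting/continuity analysis: take a sequence of polynomials $P_t \in \mathring{\sR}^\om_d$ converging to some $P_\infty \in \cP_d$; since the coefficients converge, the multiset of complex roots converges (continuity of roots), so in the limit some adjacent real roots $a_i, a_{i+1}$ of $P_t$ may coalesce — producing a real root of multiplicity $\om_i + \om_{i+1}$, which is exactly the effect of applying $\sM_i$ — and some conjugate pair $\lambda_t, \bar\lambda_t$ may collide on the real axis, producing a new real root of even multiplicity $2$, which is exactly the effect of an insertion $\sI_j$ at the appropriate position. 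Conversely, given any $\om'$ obtained from $\om$ by such a sequence, I would exhibit an explicit path: take a generic point of $\mathring{\sR}^{\om'}_d$ and ``resolve'' each merged block back into distinct real roots and each inserted $2$ back into a nearby conjugate pair, producing polynomials in $\mathring{\sR}^\om_d$ limiting to it; this shows $\mathring{\sR}^{\om'}_d \subset \sR^\om_d$. Combining the two directions gives the set-theoretic equality; that the union is a CW-decomposition (cells glued along lower cells) then follows because each $\mathring{\sR}^{\om'}_d$ is itself an open cell by the first paragraph and the reachability relation is transitive, so the boundary of each cell is a union of cells.

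The main obstacle I anticipate is the bookkeeping for the insertion operation and, relatedly, showing that \emph{only} merges and insertions occur — i.e.\ that the closure contains \emph{no other} strata. The delicate point is that when a conjugate pair hits the real axis it must create a multiplicity-$2$ real root (not higher) in a \emph{new} slot, and one must verify that the index $j$ of the new slot can be arbitrary subject to the ordering of real roots, which is why $\sI_j$ is defined for all $1 \le j \le \ell+1$; simultaneously one must rule out, e.g., a conjugate pair colliding at an existing real root (which would not be a limit of the \emph{open} stratum with the right local structure) and rule out three real roots merging ``in one step'' as anything other than two successive applications of $\sM$. I would handle this by a careful local model near the limit point $P_\infty$: factor $P_t$ near each real accumulation point as a product of a degree-$m$ monic polynomial (with $m$ the limiting local multiplicity) with small coefficients, use that the coefficients of this local factor vary continuously, and read off which $\om'$-patterns are attainable as the small roots of this local factor — these are precisely the patterns of all degree-$m$ monic real polynomials, whose closure structure is handled by induction on $d$ (or directly, since for the local factor we are in exactly the same situation with smaller degree). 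Appealing to \cite[Theorem 4.1]{Ka} for the precise combinatorial statement, as the paper does, lets me keep this inductive local analysis at the level of a sketch rather than a full re-derivation.
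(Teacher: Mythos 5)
The paper itself gives no argument for \ref{lm1}: it simply cites \cite[Theorem 4.1]{Ka}, so there is no in-paper proof to match, and your overall plan (parametrize each stratum, then analyze limits and resolutions of root configurations) is the natural route, ending—as the paper does—with an appeal to \cite{Ka}. Your dimension count and parts (a), (b) are fine, with the small caveat that for the cell claim you need the standard fact that the monic real polynomials of degree $2m$ with no real roots form an open $2m$-cell (e.g., via the symmetric product of the open upper half-plane); contractibility of the parameter set alone is not enough.

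The genuine problem is in your handling of the closure, which is the real content of the proposition. You propose to ``rule out \ldots a conjugate pair colliding at an existing real root (which would not be a limit of the open stratum with the right local structure).'' That is false: such collisions do occur in the closure. For $d=3$ and $\om=(1)$, the family $x(x^2+t^2)\in\mathring{\sR}^{(1)}_3$ converges to $x^3\in\mathring{\sR}^{(3)}_3$ as $t\to 0$, and $(3)$ is reachable from $(1)$ by an insertion followed by a merge; this is precisely the degeneration the paper describes right after the proposition (a conjugate pair ``adds $2$ to the multiplicity of the $j$\textsuperscript{th} largest real root,'' encoded as $\sI_j$ followed by $\sM_j$). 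Moreover, your own converse direction needs these families: when $\om'$ arises from $\om$ by an insertion adjacent to a block followed by a merge, the ``resolution'' of that block is exactly a real root together with a nearby conjugate pair, i.e.\ the configuration you claim cannot be a limit. So the sketch is internally inconsistent, and if such collisions really were excluded, the closure would miss strata such as $\mathring{\sR}^{(3)}_3\subset\sR^{(1)}_3$ and the statement would fail. The correct version of the delicate step is the opposite: in the limit analysis you must allow simultaneous coalescence of several conjugate pairs with each other and with (possibly merging) real roots, and verify that every such local degeneration pattern is a composition of $\sI$'s and $\sM$'s; this is exactly what your local-model/induction step (or the citation of \cite[Theorem 4.1]{Ka}) has to carry, rather than an exclusion of those collisions.
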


	Geometrically speaking, if a typical point in $\mathring{\sR}^\omega_d$ approaches the boundary $\d\sR^\omega_d := \sR_d^\om \setminus \mathring{\sR}^\omega_d$, 
then either there is at least one value of the index $j$ such that the distance between 
the $j$\textsuperscript{th} and $(j+1)$\textsuperscript{st} largest root in 
$D_\R(P)$ goes to $0$, or there are two complex-conjugate real roots that converge 
to a double real root, which  then either is $j$\textsuperscript{th} largest
or adds $2$ to the multiplicity of the $j$\textsuperscript{th} largest real root.

	The first situation corresponds to the application of the merge operation 
$\sM_j$ to $\omega$, and the second  either to the application of the insertion $\sI_j$. Of course, there exist points in the boundary of $\d \sR^\omega_d$ that can be reached from $\mathring{\sR}^\omega_d$ by applying  sequentially a number of inserts and merges. \smallskip
 
Note that $|\sim|$ is preserved under the merge operations, while 
the insert operations increase $|\sim|$ by $2$ and thus preserve its parity.
  
By \ref{lm1}, the merge and the insert operations can be used to define a 
\emph{partial order} ``$\succ$''  on the set $\bfOm$ of all 
compositions, reflecting the adjacency of the non-empty open cells 
$\mathring{\sR}_d^\omega$.

\begin{definition}\label{def2.1} 
  For $\omega, \omega' \in \bfOm$, we say that $\omega'$ 
  {\it is smaller than} $\omega$ (notation  ``$\,\omega \succ \omega'\,$'' or
   ``$\,\omega' \prec \omega\,$''),
  if $\omega'$ can be obtained from $\omega$ by a sequence of merge  and 
  insert operations $\{\sM_j\}$, $j \geq 1$, and  
  $\{\sI_j\}$, $j \geq 0$. 
\end{definition}

From now on, we will consider a subset $\Theta \subseteq \bfOm$ as a 
\emph{poset}, ordered by $\succ$.
As an immediate consequence of \ref{lm1}, we get the
following statement.

\begin{COR} 
  \label{cor:cw-structure}
  For $\Theta \subseteq \bfOm_{\langle d]}$, 
  \begin{itemize}
    \item[(i)] $\cP^\Theta_d$ is closed in $\cP_d$ if and only if,
	    for any $\omega \in \Theta$ and $\om' \in \bfOm_{\langle d]}$,
		  the relation $\omega' \prec \omega$ implies 
                   $\omega' \in \Theta$;
     
    \item[(ii)] if $\cP^\Theta_d$ is closed in $\cP_d$, then the one-point compactification
	    $\bar{\cP}^\Theta_d$ carries the structure of a compact CW-complex 
		  with open cells $\{\mathring{\sR}_d^\omega\}_{\omega \in \Theta}$, labeled by
		  $\omega \in \Theta$, and the unique $0$-cell, represented by the  point $\bullet$ at infinity.
  \end{itemize}
\end{COR}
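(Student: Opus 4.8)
The plan is to deduce both parts directly from \ref{lm1}, which identifies $\sR^\omega_d$ as the union of the cells $\mathring{\sR}^{\omega'}_d$ over all $\omega' \preceq \omega$. For part (i), I would argue as follows. By definition $\cP^\Theta_d = \bigcup_{\omega \in \Theta} \mathring{\sR}^\omega_d$, and $\cP_d = \bigsqcup_{\omega' \in \bfOm_{\langle d]}} \mathring{\sR}^{\omega'}_d$ as a set (this is the stratification recalled in \ref{sec:cellstructure}, together with the emptiness criterion for $\mathring{\sR}^{\omega'}_d$). So $\cP^\Theta_d$ is closed in $\cP_d$ if and only if it contains the closure of each of its strata, i.e. $\sR^\omega_d \subseteq \cP^\Theta_d$ for every $\omega \in \Theta$. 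Using the description of $\sR^\omega_d$ from \ref{lm1} as $\bigcup_{\omega' \preceq \omega} \mathring{\sR}^{\omega'}_d$ (with $\omega'$ ranging over compositions obtained by merges and insertions, necessarily lying in $\bfOm_{\langle d]}$ when the corresponding cell is nonempty), this containment is equivalent to: for every $\omega \in \Theta$ and every $\omega' \in \bfOm_{\langle d]}$ with $\omega' \prec \omega$, the cell $\mathring{\sR}^{\omega'}_d$ meets $\cP^\Theta_d$, which — since the strata are disjoint — forces $\omega' \in \Theta$. Conversely, this downward-closedness condition immediately gives $\sR^\omega_d \subseteq \cP^\Theta_d$ for all $\omega \in \Theta$, hence $\cP^\Theta_d$ is closed. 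One small point to check carefully is the boundary case $\omega' = \omega$ and the parity/size constraints, so that $\omega' \prec \omega$ with $\omega' \notin \bfOm_{\langle d]}$ never arises for a nonempty cell; this follows from the remark that $|\cdot|$ is preserved by merges and increased by $2$ by insertions, so parity mod $2$ and the bound $|\omega'| \le d$ are automatic along any chain starting at $\omega \in \bfOm_{\langle d]}$.

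For part (ii), assume $\cP^\Theta_d$ is closed. I would first note that $\cP_d \cong \R^d$ is locally compact Hausdorff, so its one-point compactification $\bar\cP_d$ is $S^d$, and the closed subset $\cP^\Theta_d$ has one-point compactification $\bar\cP^\Theta_d$ naturally embedded in $S^d$ as the union of $\cP^\Theta_d$ with the added point $\bullet$. The cells $\mathring{\sR}^\omega_d$ for $\omega \in \Theta$, together with the single $0$-cell $\bullet$, partition $\bar\cP^\Theta_d$. To see this is a CW-structure I would exhibit characteristic maps: by \ref{lm1} each $\mathring{\sR}^\omega_d$ is an open cell of dimension $d - |\omega|'$, and its closure $\sR^\omega_d$ in $\cP_d$ is the union of lower cells indexed by $\omega' \preceq \omega$; passing to one-point compactifications, $\bar{\sR}^\omega_d$ is the union of the $\bar{\sR}^{\omega'}_d$ with identified points at infinity (as already observed in the introduction for $\bar\cP^\Theta_d$ itself). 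The attaching data is then: the closure of $\mathring{\sR}^\omega_d$ in $\bar\cP^\Theta_d$ is $\bar{\sR}^\omega_d$, which is covered by cells of strictly smaller dimension together with $\bullet$, and the frontier $\bar{\sR}^\omega_d \setminus \mathring{\sR}^\omega_d$ lies in the union of those lower-dimensional cells. Since $\Theta$ is finite (it is a subset of the finite set $\bfOm_{\langle d]}$), closure-finiteness and the weak topology condition are automatic, so this is a finite — hence compact — CW-complex.

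The main obstacle, and the step deserving the most care, is producing genuine characteristic maps $D^{d-|\omega|'} \to \bar\cP^\Theta_d$ for the cells rather than merely checking that the frontier lands in lower skeleta. Concretely, one needs that each $\mathring{\sR}^\omega_d$, which \ref{lm1} asserts is an open cell, admits a closed-disk parametrization whose boundary sphere maps into the union of the lower cells; this is implicit in \cite[Theorem 4.1]{Ka}, where the cell structure on $\bar\cP_d$ (equivalently on $\sR^{\omega}_d$ for $\omega = (1,\dots,1)$) is constructed, and restricting that global CW-structure to the closed, cellular subspace $\bar\cP^\Theta_d$ yields the claim. I would therefore phrase part (ii) as: the CW-structure on $\bar\cP_d \cong S^d$ from \cite[Theorem 4.1]{Ka} restricts to one on the subcomplex $\bar\cP^\Theta_d$, the subcomplex consisting exactly of the cells $\mathring{\sR}^\omega_d$ with $\omega \in \Theta$ plus $\bullet$; part (i) is precisely the combinatorial condition guaranteeing that this set of cells is closed under taking faces and hence forms a subcomplex.
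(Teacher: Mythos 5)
Your proposal is correct and follows essentially the same route as the paper, which states the corollary as an immediate consequence of \ref{lm1} (i.e.\ of the closure description of $\sR^\omega_d$ and the cell structure from \cite[Theorem 4.1]{Ka}): part (i) is the downward-closedness of $\Theta$ matching closure of the union of strata, and part (ii) is the restriction of the CW-structure on $\bar\cP_d$ to the cellular closed subspace $\bar\cP^\Theta_d$. One cosmetic remark: the bound $|\om'|\le d$ is not literally preserved along chains of insertions, but as you already note this is harmless since any $\om'$ with $|\om'|>d$ labels an empty cell.
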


Recall that we call $\Theta \subseteq \bfOm_{\langle d]}$ closed 
if $\cP^\Theta_d$ is closed in $\cP_d$. Hence 
\ref{cor:cw-structure} has the following immediate reformulation.

\smallskip

\begin{center} 
  $\Theta \subseteq \bfOm_{\langle d]} \text{ is closed }$
	
	$\Updownarrow$
 
	$\text{for any } 
  \omega \in \Theta \text{ and } \om' \in \bfOm_{\langle d]},
  \text{ the relation } \om' \prec \om \text{ implies } 
  \om' \in \Theta.$ 
\end{center}

Now, we are in position to give a precise formulation of the main questions discussed in 
 this paper and its sequel \cite{KSW}: 

\begin{problem}\label{prob:main} 
  For a given closed poset $\Theta \subseteq 
  \bfOm_{\langle d]}$, 
  
  \begin{itemize}
  
	\item[$\triangleright$] calculate the homotopy groups 
  $\pi_i(\bar{\cP}_d^{\Theta})$  and $\pi_i({\cP}_d^{\bfc\Theta})$ in terms of the combinatorics
  of $\Theta$;\footnote{In full generality, this goal is as illusive as computing the homotopy groups of spheres; in fact,  as testified by \ref{th:Arnold3}, for special $\Theta$'s,  the two problems are intimately linked.} 
	
	\item[$\triangleright$]  calculate the integer homology of 
  $\bar{\cP}_d^{\Theta}$ or, equivalently, the integer 
  cohomology of $\cP_d^{\bfc\Theta}$ in terms of the combinatorics
  of $\Theta$. 
  
\end{itemize}
	
\end{problem}

In this paper we concentrate on the fundamental groups of 
$\bar{\cP}_d^{\Theta}$ and $\cP^{\bfc\Theta}_d$.
Questions about the (co)homology of $\bar{\cP}_d^{\Theta}$ and 
$\cP^{\bfc\Theta}_d$ will be addressed in \cite{KSW}.

\medskip
\noindent
\emph{Acknowledgements.} 
 The second author wants to acknowledge the financial support of his research by the Swedish Research council through the grant 2016-04416. 
 The third author wants to thank department of Mathematics of Stockholm 
University for its hospitality in May 2018.
He also was partially supported by an NSF grant DMS 0932078, administered by 
the Mathematical  Sciences  Research  Institute  while  the  author  was  in  
residence  at  MSRI during the complimentary program 2018/19.
During both visits substantial progress on the project was made.

We also would like to thank Alex Suciu for enlightening discussions about our fundamental groups and for pointing out a mistake in earlier version of \ref{extorsion}.

\section{Computing $\pi_1(\bar{\cP}_d^{\Theta})$ and $\pi_1(\cP_d^{\bfc\Theta})$}
\label{sec:fundamental}

\subsection{Homotopy type of $\cP_d^\Theta$ and the fundamental group $\pi_1(\bar{\cP}_d^{\Theta})$}

The following simple statement gives us a start on the homotopy type of the 
polynomial spaces under consideration.
In the proof, we use the map $q : \cP_d \times [0,+\infty) \rightarrow \cP_d$ which sends 
each pair $(P(x),\lambda)$, where 
$P(x) = x^d + a_{d-1} x^{d-1} + \cdots 
+ a_0$ and $\lambda \in [0,+\infty)$, to the polynomial $x^d + a_{d-1} \, \lambda \,x^{d-1} + \cdots + a_0 \, \lambda^{d}$.
Hence, this transformation amounts to the multiplication of all roots of $P(x)$ by $\lambda$. 

\begin{lemma}\label{lm2} 
  For any poset $\Theta \subseteq \bfOm_{\langle d]}$ that  
  contains $(d)$, the space  
  $\cP^{\Theta}_d\subseteq \cP_d$ is contractible.   
  In particular, for any closed poset $\Theta  \subseteq 
  \bfOm_{\langle d]}$, the space $\cP^{\Theta}_d$ is 
  contractible.
\end{lemma}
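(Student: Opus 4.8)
The plan is to exhibit an explicit deformation retraction of $\cP_d^\Theta$ onto the single point $\{x^d\}$, using the scaling map $q$ described just before the lemma. First I would observe that for $P(x) = x^d + a_{d-1}x^{d-1} + \cdots + a_0$ and $\lambda \in [0,1]$, the polynomial $q(P,\lambda) = x^d + a_{d-1}\lambda x^{d-1} + \cdots + a_0\lambda^d$ is precisely the monic polynomial whose roots are $\lambda \cdot r$ as $r$ ranges over the roots of $P$ (with multiplicity), since if $P(x) = \prod (x - r_i)$ then $\prod(x - \lambda r_i) = \lambda^d \prod(x/\lambda - r_i) = \lambda^d P(x/\lambda)$, which has the stated coefficients. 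In particular, scaling all roots by $\lambda > 0$ is a bijection on roots that preserves coincidences among them, hence preserves the real root multiplicity pattern $\om_P$; and at $\lambda = 0$ every root becomes $0$, collapsing $P$ to $x^d$.

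The key point is that the homotopy $H : \cP_d^\Theta \times [0,1] \to \cP_d$, $H(P,\lambda) = q(P, 1-\lambda)$ (so $H(\cdot,0) = \mathrm{id}$ and $H(\cdot,1) \equiv x^d$), actually stays inside $\cP_d^\Theta$. For $\lambda \in [0,1)$ this is because $q(P,1-\lambda)$ has the same multiplicity pattern as $P$, so if $P \in \mathring{\sR}_d^\om$ for some $\om \in \Theta$ then $q(P,1-\lambda) \in \mathring{\sR}_d^\om \subseteq \cP_d^\Theta$ as well. At the endpoint $\lambda = 1$ we land on $x^d$, whose real root multiplicity pattern is $(d)$; the hypothesis that $\Theta$ contains $(d)$ guarantees $x^d \in \mathring{\sR}_d^{(d)} \subseteq \cP_d^\Theta$, so the whole track of the homotopy lies in $\cP_d^\Theta$. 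Continuity of $H$ is immediate since $q$ is a polynomial map in the coefficients $a_i$ and in $\lambda$.

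This shows $\cP_d^\Theta$ deformation retracts (indeed, the homotopy fixes $x^d$ throughout, since the root pattern $(d)$ is fixed under scaling) onto the contractible point $\{x^d\}$, hence is contractible. For the last sentence of the statement: if $\Theta \subseteq \bfOm_{\langle d]}$ is closed, then by \ref{cor:cw-structure} (equivalently, by the reformulation that closedness means $\om' \prec \om$, $\om \in \Theta$ forces $\om' \in \Theta$) and \ref{lm1}, the minimal composition $(d)$ — which is $\prec \om$ for every $\om$, being obtainable from any $\om$ by repeated merges — must lie in $\Theta$ whenever $\Theta$ is nonempty; more precisely $\Theta$ contains $(d)$ because $(d)$ is the bottom element of the poset and any nonempty closed $\Theta$ must contain it. So the closed case follows from the first assertion.

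I do not expect a genuine obstacle here; the only things to be careful about are (i) verifying that the coefficient formula $q(P,\lambda) = \lambda^d P(x/\lambda)$ correctly encodes "scale the roots by $\lambda$", which is a one-line computation, and (ii) making sure the endpoint $\lambda = 1$ (where $q$ degenerates) lands on $x^d$ and that this requires $(d) \in \Theta$ — this is exactly where the hypothesis is used, so it should be flagged explicitly. Everything else (continuity, the retraction being well-defined on the subspace) is routine.
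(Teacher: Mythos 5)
Your argument is essentially the paper's own proof: both use the scaling map $q$ (multiply all roots by $\lambda$), observe that the multiplicity pattern is preserved for $\lambda>0$ and that the endpoint is $x^d\in\mathring{\sR}_d^{(d)}$, so the hypothesis $(d)\in\Theta$ is exactly what keeps the homotopy inside $\cP_d^\Theta$, giving a contraction to $x^d$. One tiny quibble in your closing remark: obtaining $(d)$ from an arbitrary $\om$ with $|\om|<d$ requires insertion operations as well as merges, but your main point --- that $(d)$ is the bottom element of $\bfOm_{\langle d]}$ and hence lies in every nonempty closed $\Theta$ --- is correct and is precisely what makes the ``in particular'' clause follow.
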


\begin{proof} 
  For $P(x) \in \cP^\Theta_d$ and $\lambda \geq 0$, the roots of
  $q(P(x),\lambda)$ are the roots of $P(x)$, being multiplied by $\lambda$.  
  In particular, we have $q(P(x),0) = x^d \in \cP_d^{(d)}$.
  Thus by $(d) \in \Theta$ we have $q(P(x),\lambda) \in \cP_d^\Theta$ for 
  $P(x) \in \cP_d^\Theta$. 
  Then the restriction of $q$ to $[0,1] \times \cP^{\Theta}_d$ is a 
  well-defined homotopy between the identity map and the
  constant map that sends $\cP^{\Theta}_d$ to $x^d$.
  The assertion now follows.
\end{proof}

In contrast to $\cP_d^{\Theta}$, its one-point compactification 
$\bar{\cP}_d^{\Theta}$ often has non-trivial topology for closed posets 
$\Theta$. A simple example of such a situation is 
$\bar{\cP}_d^{\bfOm_{\langle d]}} = \bar{\cP}_d \cong S^d$. Other 
examples, including the case treated in \ref{th:Arnold3} and \ref{th:Arnold4}, 
show that $\cP_d^{\bfc\Theta}$ can have non-trivial topology as well.

Besides the map $q$, the following map $p$ has been frequently used in the 
literature on the topology of spaces of univariate polynomials. 
The map $p : \cP_d \rightarrow \cP_d$ sends 
$P(x) = x^d + a_{d-1}x^{d-1} + \cdots + a_0 \in \R[x]$ to 
$P(x-\frac{a_{d-1}}{d})$. The map $p$ preserves the stratification 
$\{\mathring{\sR}^\omega_d\}_\om$ and is a fibration with the fiber $\R$. 
Thus, for a closed poset $\Theta \subseteq \bfOm_{\langle d]}$, the 
restriction $p|_{\cP^\Theta_d} : \cP^\Theta_d \rightarrow \cP^\Theta_d$  
is also a fibration with fiber $\R$. Its image $\cP^\Theta_{d,0}$ consists of 
all polynomials in $\cP^\Theta_{d}$ with the vanishing coefficient at $x^{d-1}$, 
i.e. with vanishing root sum.  Therefore, we get a homeomorphism  
$\bar{\cP}^\Theta_d \cong \tilde\Sigma \bar{\cP}^\Theta_{d,0}$. 
Here $\tilde\Sigma\,X$ denotes the reduced  suspension of a space $X$.

Since $q(P(x),\lambda)$ amounts to multiplying the roots of $P(x)$ by
$\lambda$, their sum is multiplied by $\lambda$ as well. In particular, $q$ preserves 
$\cP^\Theta_{d,0}$. 
Using the map $$a_0 + \cdots +a_{d-1}x^{d-1}+x^d \, \mapsto \, (a_0,\ldots, a_{d-1})$$ we 
identify $\cP_d$ with Euclidean $d$-space with respect to its metric structure.
Clearly, for $\lambda \geq 0$ and fixed $0 \neq P(x) \in \cP_d$, the norm
$\|q(P(x),\lambda)\|$ is strictly monotone in $\lambda$. 
Since $\|q(P(x),0)\|=0$, there is a unique $\lambda_P > 0$ for which 
$\|q(P(x),\lambda_P)\|=1$. 
Let $S^{d-1} \subset \cP_d$ be the unit sphere.
Therefore, 
on $\cP^\Theta_{d,0} \setminus \{0\}$ the map $P(x) \mapsto q(P(x),\lambda_P)$ 
is a deformation retraction to the closed subspace  
$S^{d-1} \cap \cP^{\Theta}_{d,0}$ of $S^{d-1}$. Thus, we get a homeomorphism 
$\bar{\cP}_{d,0}^\Theta \cong \Sigma (S^{d-1} \cap \cP^{\Theta}_{d,0})$.
Note, that  we consider $\Sigma\,\emptyset$ as the discrete two-point 
space. This analysis implies the following claim. 

\begin{theorem} 
  \label{pro:pitheta}
  For  any closed poset  $\Theta \subseteq \bfOm_{\langle d]}$, we get 
   $\pi_{1}(\bar{\cP}_d^\Theta) = 0$, unless $\Theta = \{ (d) \}$. 
  If $\Theta = \{(d)\}$ then $\bar{\cP}_d^\Theta \cong S^1$. 
\end{theorem}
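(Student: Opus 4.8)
The plan is to leverage the two homeomorphisms established in the discussion immediately preceding the statement, namely $\bar{\cP}^\Theta_d \cong \tilde\Sigma\,\bar{\cP}^\Theta_{d,0}$ and $\bar{\cP}_{d,0}^\Theta \cong \Sigma(S^{d-1}\cap\cP^\Theta_{d,0})$, together with the elementary fact that a (reduced or unreduced) suspension of a nonempty space is simply connected. First I would combine the two displayed homeomorphisms to conclude $\bar{\cP}_d^\Theta \cong \tilde\Sigma\,\Sigma(S^{d-1}\cap\cP^\Theta_{d,0})$, so that $\bar{\cP}_d^\Theta$ is a double suspension; as soon as the space being suspended is nonempty, the single unreduced suspension $\Sigma(S^{d-1}\cap\cP^\Theta_{d,0})$ is already path-connected, and one further suspension makes $\pi_1$ vanish by van Kampen (cover the suspension by the two contractible cones, whose intersection is path-connected). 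So the only way $\pi_1(\bar{\cP}_d^\Theta)$ can fail to be trivial is when $S^{d-1}\cap\cP^\Theta_{d,0} = \emptyset$, in which case, per the convention recorded just before the statement, $\Sigma\,\emptyset$ is the two-point space, $\tilde\Sigma$ of that is $S^1$, and $\bar{\cP}_d^\Theta \cong S^1$.

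Next I would identify exactly when $S^{d-1}\cap\cP^\Theta_{d,0}=\emptyset$, i.e.\ when $\cP^\Theta_{d,0}=\{0\}$. By \ref{lm1} the stratum $\mathring{\sR}^\omega_d$ has dimension $d-|\omega|'$, and the root-sum-zero slice $\cP^\Theta_{d,0}$ meets it in a cell of dimension $d-1-|\omega|'$ (or is disjoint from it when that number is negative). For $\omega=(d)$ we have $|\omega|'=d-1$, so $\mathring{\sR}^{(d)}_d$ is a line and $\mathring{\sR}^{(d)}_d\cap\cP_{d,0}$ is the single point $\{x^d\}=\{0\}$; thus if $\Theta=\{(d)\}$ then indeed $\cP^\Theta_{d,0}=\{0\}$ and $S^{d-1}\cap\cP^\Theta_{d,0}=\emptyset$. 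Conversely, if $\Theta$ is closed and $\Theta\neq\{(d)\}$, then since $\Theta$ is closed it contains $(d)$ (as $(d)$ is $\prec$ every composition in $\bfOm_{\langle d]}$), so it contains some $\omega\neq(d)$; I would then argue that for such $\omega$, $\sR^\omega_d\cap\cP_{d,0}$ already contains more than one point — e.g.\ applying a merge or insertion chain from $\omega$ down toward $(1,1,\dots,1)$ or just noting $\sR^\omega_d$ has dimension $\geq 2$ so its zero-root-sum slice has dimension $\geq 1$ — hence is nonempty after intersecting with the unit sphere. This gives the dichotomy in the statement.

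The main obstacle I anticipate is the converse direction: verifying cleanly that $\Theta\neq\{(d)\}$ (with $\Theta$ closed) forces $S^{d-1}\cap\cP^\Theta_{d,0}\neq\emptyset$. One has to be slightly careful because a priori $\Theta$ could consist of $(d)$ together with compositions $\omega$ for which the slice $\cP^\Theta_{d,0}\cap\mathring{\sR}^\omega_d$ is a single point or empty; the clean way around this is to observe that closedness of $\Theta$ means $\Theta$ is a down-set, so containing any $\omega\neq(d)$ forces $\Theta$ to contain an entire chain $\omega \succ \cdots \succ (d)$, and somewhere along that chain one reaches a composition $\omega'$ with $|\omega'|'\leq d-2$ (indeed $(d)$ is the unique composition with $|\cdot|'=d-1$ in $\bfOm_{\langle d]}$), whose zero-root-sum slice is then a cell of dimension $\geq 1$, hence meets $S^{d-1}$. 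Everything else — the suspension/van Kampen argument and the $\Theta=\{(d)\}$ computation — is routine given the preceding setup.
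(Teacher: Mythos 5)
Your proposal is correct and follows essentially the same route as the paper: both rely on the homeomorphism $\bar{\cP}_d^\Theta \cong \tilde\Sigma\,\Sigma(S^{d-1}\cap\cP^\Theta_{d,0})$ to get simple connectivity unless the link is empty, and identify emptiness with $\Theta=\{(d)\}$. Your dimension count via \ref{lm1} merely fills in the step the paper asserts without detail, and your computation of the exceptional case via $\tilde\Sigma\Sigma\emptyset\cong S^1$ matches the paper's direct observation that $\mathring{\sR}^{(d)}_d\cong\R$ compactifies to $S^1$.
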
 
\begin{proof}
  By the arguments preceding the theorem, we have $\bar{\cP}_d^\Theta \cong
  \tilde\Sigma \Sigma (S^{d-1} \cap \cP^{\Theta}_{d,0})$. 
  Therefore, 
  $\bar{\cP}_d^\Theta$ is simply connected, unless 
  $S^{d-1} \cap P^{\Theta}_{d,0}$ is
  empty. But this can only happen if $\Theta = \{(d)\}$. It is easily seen that
	$\mathring{\sR}^{(d)}_d = \{ (x-\alpha)^d~|~\alpha\in \R\}
  \cong \R$. Hence its one-point compactification is $S^1$. 
  \end{proof}

 Note, that the argument employed in the proof of 
 \ref{pro:pitheta} also implies the following isomorphism in homology
 $$\bar H_i(S^{d-1} \cap P^{\Theta}_{d,0};\, \Z) \approx 
 \bar H_{i+2}(\bar{\cP}_d^\Theta;\, \Z)$$ for all $i \geq 0.$

\subsection{The fundamental group of the complement of the codimension two skeleton of the space $\mathcal P_d$}

In this section, we describe the fundamental group 
$\pi_1(\cP_d^{\bfc\paOm{\langle d]}{\geq 2}})$. Recall, that 
we collect in $\paOm{\langle d]}{\geq 2}$ all $\om \in \bfOm_{\langle d]}$ such 
that $|\om|' \geq 2$. Thus  $\cP_d^{\bfc\paOm{\langle d]}{\geq 2}}$ 
is the complement of the codimension $2$ skeleton of $\cP_d$ in our celluation.
\smallskip

The outline  of the arguments below is as follows. 
The space $\cP_d^{\bfc\Theta}$ is an open $d$-dimensional manifold without 
boundary, stratified into cells of dimensions $d$ and $d-1$. Moreover, for 
any pair of top-dimensional cells, there exists at most one $(d-1)$-dimensional cell which separates them. Any such manifold is homotopy equivalent to a graph 
whose vertices are top-dimensional cells and whose edges connect pairs of 
adjacent top-dimensional cells. \smallskip

We associate a graph $\Gr_d$ with the space 
$\cP_d^{\bfc\paOm{\langle d ]}{\geq 2}}$, subdivided into open $d$-cells by the $(d-1)$-cells. 
The set of vertices of $\Gr_d$ is the union of the sets
$\paOm{\langle d]}{=1}$ and $\paOm{\langle d]}{=0}$.
We connect vertices $\om \in \paOm{\langle d]}{=1}$ and 
$\om' \in \paOm{\langle d]}{=0}$  by an edge $\{\om,\om'\}$, 
if the $(d-1)$-cell $\mathring{\sR}_d^{\om}$ 
lies in the boundary of the closure 
$\sR_d^{\omega'}$ of $\mathring{\sR}_d^{\omega'}$.
In particular, the edges of $\Gr_d$ correspond to single insertion and merging 
operations, applied  to compositions from $\paOm{\langle d]}{=0}$.
As usual, we identify the graph $\Gr_d$ 
with the $1$-dimensional simplicial complex, defined by its vertices and 
edges (see \ref{fig3} for the example of $\Gr_6$).  

We embed the graph 
$\Gr_d$ in $\cP_d^{\mathbf {c}\paOm{\langle d ]}{\geq 2}}$ by mapping the 
vertex of $\Gr_d$, labeled by $\om \in \paOm{\langle d]}{=1}$, to a 
preferred point $w_\om$ in the $(d-1)$-cell $\mathring{\sR}_d^{\omega}$ and 
the vertex of $\Gr_d$, labeled by $\om \in \paOm{\langle d]}{=0}$, to a 
preferred point $e_\om$ in the $d$-cell $\mathring{\sR}_d^{\omega}$. 
Then we identify each edge $\{\om, \om'\}$ of $\Gr_d$, where
$\om \in \paOm{\langle d]}{=1}$, $\om \in \paOm{\langle d]}{=0}$,
with a smooth path $[w_\om, e_{\om'}]$ such that the semi-open segment 
$(w_\om, e_{\om'}] \subset \mathring{\sR}_d^{\omega'}$. In addition, we
can choose the paths so that $[w_{\om_1}, e_{\om'}] \cap 
[w_{\om_2}, e_{\om'}] = e_{\om'}$ for any pair $\om_1, \om_2 \prec \om'$
and $\om_1 \neq \om_2$. Moreover, for each $\om \in \paOm{\langle d]}{=1}$, 
we may arrange for the two paths, $[w_\om, e_{\om'_1}]$ and 
$[w_\om, e_{\om'_2}]$, to share the tangent vector at their common end $w_\om$, 
so that the path $[e_{\om'_1}, e_{\om'_2}]$ is transversal\footnote{The transversality is needed to insure the stability under perturbations of the map $\mathcal E$, defined below, with respect to the hypersurfaces $\{\mathring{\sR}_d^{\omega}\}$.} to the 
hypersurface $\mathring{\sR}_d^{\omega}$ at $w_\om$. This construction 
produces an embedding $\mathcal E: \Gr_d \to 
\cP_d^{\mathbf {c}\paOm{\langle d ]}{\geq 2}}$. In what follows, we do 
not distinguish between $\Gr_d$ and its image 
$\mathcal E(\Gr_d) \subset \cP_d^{\mathbf {c}\paOm{\langle d ]}{\geq 2}}$.

\begin{lemma}
  \label{lem:graph}
  The graph $\Gr_d$ 
  is 
  homotopy equivalent to a wedge of
  $\frac{d(d-2)}{4}$ circles if $d$ is even, and of
  $\frac{(d-1)^2}{4}$ circles if $d$ is odd. 
\end{lemma}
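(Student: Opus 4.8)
The plan is to compute the homotopy type of $\Gr_d$ by the standard fact that a connected graph is homotopy equivalent to a wedge of $1 - \chi(\Gr_d)$ circles, where $\chi(\Gr_d) = |V(\Gr_d)| - |E(\Gr_d)|$ is the Euler characteristic; so it suffices to establish connectivity and to count vertices and edges. First I would count the vertices. The vertex set is $\paOm{\langle d]}{=1} \cup \paOm{\langle d]}{=0}$. The compositions $\om$ with $|\om|' = 0$ are exactly the all-ones compositions $(1,\dots,1)$ of length $\ell$ with $\ell \le d$ and $\ell \equiv d \bmod 2$ (by \ref{lm1}(a)), which gives $\lfloor d/2 \rfloor + 1$ such vertices. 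The compositions with $|\om|' = 1$ are those that differ from an all-ones string by replacing exactly one block by a single entry $\geq 2$ or, equivalently, that have exactly one component $\om_i \geq 2$ and all other components equal to $1$; enumerating these by the value $m = \om_i \in \{2,\dots\}$ and the number of surrounding ones, and imposing $|\om| \le d$, $|\om| \equiv d \bmod 2$, is an elementary lattice-point count. I would carry out these two counts carefully and split into the even and odd cases for $d$ at the end.

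Next I would count the edges. Each edge $\{\om, \om'\}$ joins some $\om$ with $|\om|' = 1$ to some $\om'$ with $|\om|' = 0$, with $\mathring{\sR}_d^{\om} \subset \partial \sR_d^{\om'}$, i.e. $\om \prec \om'$ via a single merge or insertion. Since $|\om'|' = 0 < 1 = |\om|'$ and merges preserve $|\sim|'$ while insertions raise it by $2$, the only way to pass from $\om'$ down to $\om$ lowering $|\sim|'$ by exactly $1$ is... actually one must be careful: going \emph{down} the order $\succ$ from $\om$ to $\om'$, we need $\om' \prec \om$. Here $|\om'|' = 0$ and $|\om|' = 1$ is impossible to reach by merges (preserve $|\sim|'$) or insertions (raise it); the resolution is that in the cell structure the covering relations from a codimension-one cell to a codimension-zero cell come from the geometric degeneration described after \ref{lm1}: a pair of complex-conjugate roots coalesces, which is the insertion $\sI_j$ possibly followed by $\sM_j$. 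So I would instead describe the edges directly: from each $\om'= (1^\ell)$ in $\paOm{\langle d]}{=0}$, the adjacent $(d-1)$-cells are obtained by the geometric boundary operations raising codimension by one, namely creating one double root (an $\sI_j$-type degeneration landing in a composition with a single $2$) or merging a created conjugate pair onto an existing root (giving a single $3$). Counting, for each all-ones vertex $\om' = (1^\ell)$, the number of such neighbours is a simple function of $\ell$ — roughly $\ell$ choices of where to insert a $2$, plus $\ell$ choices of which existing root to bump to a $3$, minus boundary/parity corrections — and summing over the admissible $\ell$ gives $|E(\Gr_d)|$. The main bookkeeping obstacle is getting these incidences exactly right, in particular which $\om \in \paOm{\langle d]}{=1}$ are actually codimension-one faces of which $\om' \in \paOm{\langle d]}{=0}$, using \ref{lm1} and the geometric description of $\sR_d^{\om'} \setminus \mathring\sR_d^{\om'}$; I expect this case analysis (value of the large entry $= 2$ vs $= 3$, interior vs. end position, parity constraints on $\ell$) to be where essentially all the work lies.

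Then I would verify that $\Gr_d$ is connected. This should follow because every all-ones composition is $\succ$-comparable down to $(d)$ through a chain of merges, but more directly: any two vertices can be joined by a path through $\paOm{\langle d]}{=0}$-vertices, since from $(1^\ell)$ one can reach $(1^{\ell-2})$ by first inserting a $2$ (an edge to a $|\cdot|'=1$ vertex) and then merging it down to nothing (another edge back to a $|\cdot|'=0$ vertex), so all the all-ones vertices lie in one component, and by construction every $|\cdot|'=1$ vertex is attached to at least one all-ones vertex. With connectivity in hand, $\Gr_d \simeq \bigvee_{r} S^1$ with $r = |E(\Gr_d)| - |V(\Gr_d)| + 1$. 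Finally I would substitute the vertex and edge counts, simplify, and check that $r = \frac{d(d-2)}{4}$ for $d$ even and $r = \frac{(d-1)^2}{4}$ for $d$ odd; a sanity check against small cases ($d = 6$, matching \ref{fig3}, and a small odd $d$) would confirm the formula. The only genuine mathematical content beyond routine enumeration is the precise incidence structure between codimension-zero and codimension-one cells, which is exactly what \ref{lm1} together with the geometric discussion after it supplies.
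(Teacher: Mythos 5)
Your skeleton is the same as the paper's (connectivity plus an Euler--characteristic count, with the number of circles equal to $1-\chi(\Gr_d)$), and your treatment of $\paOm{\langle d]}{=0}$ and of connectivity is fine; but the proposal goes wrong exactly at the point you yourself defer all the work to, namely the identification of the codimension-one strata and their incidences with the top cells. In this paper $|\om|'$ is the codimension of $\mathring{\sR}^\om_d$, i.e.\ $|\om|'=\sum_i(\om_i-1)$; it is neither the number of parts of size $\geq 2$ nor $|\om|=\sum_i\om_i$. Hence $\paOm{\langle d]}{=1}$ consists precisely of the compositions with exactly one entry $2$ and all remaining entries $1$: your enumeration of the $|\cdot|'=1$ vertices ``by the value $m=\om_i\in\{2,\dots\}$'' would wrongly admit compositions such as $(1,3,1)$, which have $|\cdot|'=2$ and are not vertices of $\Gr_d$ at all, so already the vertex count would come out too large. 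Likewise, a merge and an insertion each raise $|\cdot|'$ by exactly $1$ (what merges preserve and insertions raise by $2$ is $|\om|$, the sum); so the ``paradox'' you set up and then resolve does not exist, and your resolution introduces a wrong picture of the edges.

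Concretely, the codimension-one faces of the top cell labelled by $k$ ones are exactly the $k-1$ walls $(\underbrace{1,\dots,1}_{s},2,\underbrace{1,\dots,1}_{k-2-s})$, $0\le s\le k-2$, coming from a single merge (two adjacent simple real roots colliding), together with, provided $k+2\le d$, the $k+1$ walls $(\underbrace{1,\dots,1}_{s},2,\underbrace{1,\dots,1}_{k-s})$, $0\le s\le k$, coming from a single insertion (a complex-conjugate pair landing as a real double root). Your description omits the merge-type edges altogether and instead counts ``bumping an existing root to a $3$''; but that degeneration is $\sM_j\circ\sI_j$, lands in a codimension-two stratum, and contributes no edge of $\Gr_d$ (the case distinction ``large entry $=2$ vs $=3$'' never occurs among codimension-one strata). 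Since your entire edge count rests on this mis-stated case analysis, the argument as proposed does not establish the formula. With the incidences corrected, the computation is exactly the paper's: for $d$ even there are $\frac d2+1$ top-cell vertices, $\bigl(\frac d2\bigr)^2$ wall vertices, and $\frac{d^2}{4}+\frac{d^2}{4}$ edges, whence $1-\chi(\Gr_d)=\frac{d(d-2)}{4}$; the odd case is analogous and yields $\frac{(d-1)^2}{4}$.
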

\begin{proof}
  A simple calculation shows that 
  $|\paOm{\langle d]}{=0}| = \lfloor \frac{d}{2} \rfloor+1$, and 
  $$|\paOm{\langle d]}{=1}| = \sum_{k=1}^{\lfloor \frac{d}{2} \rfloor}
  (2k-1) = \Big\lfloor\frac{d}{2} \Big\rfloor^2.$$
  For each $k \in  \{2,\ldots, d\}$ with the same parity as $d$, the vertex
  $(\underbrace{1,\ldots,1}_{k}) \in \paOm{\langle d]}{=0}$ is contained in
   the $k-1$ edges that lead to the vertices 
  $(\underbrace{1,\ldots,1}_{s}, 2,\underbrace{1,\ldots,1}_{k-2-s}) 
  \in \paOm{\langle d]}{=1}$, where $0 \leq s \leq k-2 \leq d-2$,
	and in the $k+1$ edges that lead to the vertices 
	$(\underbrace{1,\ldots,1}_{s},2,\underbrace{1,\ldots,1}_{k-s}) \in \paOm{\langle d]}{=1}$, where $0 \leq s \leq k \leq d-1$.
  For $d$ even, each case yields $\frac{d^2}{4}$ edges. 
  It is easily seen that the graph $\Gr_d$  is connected and hence it is 
	homotopy 
  equivalent to a wedge of circles. Now a simple 
  calculation of the Euler characteristic $\chi(\Gr_d)$ yields 
  $1-(\frac{d}{2} +1 + \frac{d^2}{4})+ 2\frac{d^2}{4} = \frac{d(d-2)}{4}$
  circles. The calculation for odd $d$ is analogous.
\end{proof}

Our next result is inspired by Arnold's \ref{th:Arnold1}. 
In \ref{fig1a}, for $d =6$, we illustrate it by exhibiting the 
cell structure in $\cP_6^{\bfc \paOm{\langle 6]}{\geq 2}}$ 
and its graph $\Gr_6$. 

\begin{theorem}\label{lm2b} 
  The space $\cP_d^{\bfc \paOm{\langle d]}{\geq 2}}$ is 
  homotopy equivalent to a 
  wedge of $\frac{d(d-2)}{4}$ circles for $d$ even, and to a 
  wedge of $\frac{(d-1)^2}{4}$ circles for $d$ odd.
      
  In particular, the fundamental group
  $\pi_1(\cP_d^{\bfc \paOm{\langle d]}{\geq 2}})$ is the 
  free group on $\frac{d(d-2)}{4}$ generators for even $d$ and 
  on $\frac{(d-1)^2}{4}$ generators for odd $d$, 
  and $\cP_d^{\bfc \paOm{\langle d]}{\geq 2}}$ is the corresponding 
  $K(\pi, 1)$-space.
\end{theorem}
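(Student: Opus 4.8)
The plan is to show that the space $\cP_d^{\bfc \paOm{\langle d]}{\geq 2}}$ deformation retracts onto the embedded graph $\Gr_d$, and then invoke \ref{lem:graph} to identify the homotopy type. Once the homotopy equivalence with a wedge of circles is established, the statement about $\pi_1$ being free of the stated rank is immediate from van Kampen (or just from the fact that $\pi_1$ of a wedge of $n$ circles is the free group $F_n$), and the $K(\pi,1)$ assertion follows because a wedge of circles is aspherical (its universal cover is a tree, hence contractible).

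The heart of the argument is the deformation retraction. First I would set up the local picture: near any point of a $(d-1)$-cell $\mathring{\sR}_d^{\omega}$ with $\omega \in \paOm{\langle d]}{=1}$, the space $\cP_d^{\bfc \paOm{\langle d]}{\geq 2}}$ looks like $\R^{d-1} \times \R$, with the $(d-1)$-cell sitting as $\R^{d-1} \times \{0\}$ and the two adjacent top cells as the two half-spaces; this is exactly the content of the geometric description following \ref{lm1}, since the only degenerations in codimension one are a single merge or a single insertion, and codimension-two strata have been deleted. Consequently $\cP_d^{\bfc \paOm{\langle d]}{\geq 2}}$ is a $d$-manifold stratified by cells of dimension $d$ and $d-1$ such that each $(d-1)$-cell is a two-sided separating wall between (at most, and here exactly) two top cells. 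The standard move is then: each open top cell $\mathring{\sR}_d^{\omega'}$, $\omega' \in \paOm{\langle d]}{=0}$, is an open $d$-ball and can be retracted onto a small neighborhood of the point $e_{\omega'}$ together with the collar directions pointing into each adjacent wall; each $(d-1)$-cell is an open $(d-1)$-ball that can be retracted onto the point $w_\omega$; and these retractions can be chosen compatibly along the walls so that they glue to a global deformation retraction of $\cP_d^{\bfc \paOm{\langle d]}{\geq 2}}$ onto $\mathcal E(\Gr_d)$. The embedding $\mathcal E$ was engineered precisely so that the arcs $[w_\omega, e_{\omega'}]$ hit the walls transversally and meet only at the chosen vertices, which is what makes the gluing consistent.

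I expect the main obstacle to be making the gluing of the local retractions genuinely rigorous rather than merely plausible. The clean way is probably to build an explicit handle/CW decomposition: thicken $\Gr_d$ to a regular neighborhood $N$ inside $\cP_d^{\bfc \paOm{\langle d]}{\geq 2}}$ (a $d$-dimensional handlebody, one $0$-handle per vertex, one $1$-handle per edge) and then argue that the complement $\cP_d^{\bfc \paOm{\langle d]}{\geq 2}} \setminus N$ has no cells left to contribute — i.e., that the closures of the top cells, minus their contributions to $N$, are collapsible rel boundary. Here one uses that $\mathring{\sR}_d^{\omega'}$ is a \emph{convex} open cell in $\cP_d \cong \R^d$ (by \ref{lm1}, being a single stratum of the standard polynomial stratification, it is the image of an orthant under an affine change of coordinates), so the straight-line homotopy toward $e_{\omega'}$ works and only needs to be reconciled with the collar structure along the finitely many walls in its boundary. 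An alternative, perhaps cleaner for write-up, is to cite the general principle: an open manifold that is the union of open cells of codimension $0$ and $1$, with the cells of codimension $1$ separating pairs of top cells, deformation retracts onto the dual graph; this is the statement already flagged in the outline preceding \ref{lem:graph} (``Any such manifold is homotopy equivalent to a graph \dots''), and the remaining work is just to verify its hypotheses in our case, namely that $\cP_d^{\bfc \paOm{\langle d]}{\geq 2}}$ is indeed a manifold (no codimension-$\geq 2$ strata survive, so it is), that the walls are two-sided (the transversality built into $\mathcal E$), and that the dual graph is exactly $\Gr_d$ (by construction). Then \ref{lem:graph} finishes the count, and asphericity of a graph finishes the $K(\pi,1)$ claim.
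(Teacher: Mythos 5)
Your high-level strategy is the same as the paper's: identify the homotopy type of $\cP_d^{\bfc \paOm{\langle d]}{\geq 2}}$ with that of the embedded graph $\Gr_d$, invoke \ref{lem:graph} for the count, and get the $K(\pi,1)$ statement from asphericity of graphs. The difference lies entirely in how the reduction to $\Gr_d$ is justified, and that is where your proposal has a genuine gap. The paper does not build a deformation retraction at all: it proves that the embedding $\mathcal E : \Gr_d \to \cP_d^{\bfc\paOm{\langle d]}{\geq 2}}$ is a homotopy equivalence via the Nerve Lemma, covering the space by the open sets $X_\omega$ (a codimension-one wall together with its two adjacent $d$-cells), checking that all nonempty intersections are empty or contractible, applying \cite[Corollary 4G.3]{Ha}, matching this nerve with the nerve of the covering of $\Gr_d$ by the edge-stars $Y_\omega = X_\omega \cap \mathcal E(\Gr_d)$ (via \cite[Theorem 10.6]{Bj}), and then using the naturality in the proof of the nerve lemma to conclude that $\mathcal E$ itself is a homotopy equivalence. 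In your write-up this entire step is replaced by the assertion that local retractions of the cells can be glued; you flag the gluing as the main obstacle yourself, and neither of your two proposed patches closes it.

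Concretely: (i) the convexity claim is false. \ref{lm1} only says that $\mathring{\sR}_d^{\omega}$ is an open topological cell; it is not an affine image of an orthant in coefficient space. Already for $d=2$ the top cell $\mathring{\sR}_2^{(1,1)} = \{x^2+bx+c \,:\, b^2 > 4c\}$ is not convex: the polynomials with $(b,c)=(2,0.9)$ and $(-2,0.9)$ lie in it, but their midpoint $(0,0.9)$ lies in $\mathring{\sR}_2^{()}$. Hence the straight-line homotopy toward $e_{\omega'}$ can leave the cell (and for larger $d$ can even cross strata of $\paOm{\langle d]}{\geq 2}$), so the collapsibility argument built on it does not stand. (ii) The ``general principle'' you propose to cite --- that an open manifold stratified by cells of codimension $0$ and $1$, with separating walls, deformation retracts onto its dual graph --- is precisely the heuristic sentence in the paper's outline preceding \ref{lem:graph}, not a quotable theorem; making it rigorous is the content of the proof, which the paper supplies through the nerve argument. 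To repair your proposal you would either have to run that nerve argument (as the paper does) or give an honest global construction, e.g.\ genuine collar structures along each wall (the paper's appeal to \cite[Lemma 2.4]{Ka} enters here) together with a regular-neighborhood or Morse-theoretic retraction inside each top cell that does not rely on convexity.
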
 

\begin{proof}  
  As an open subset of $\R^d$, the space 
  $\cP_d^{\mathbf{c}\paOm{\langle d]}{\geq 2}}$
  is paracompact. Now consider a finite open cover 
  $\mathcal X := \{X_\omega\}_{\omega \in \paOm{\langle d]}{=1}}$ of the space 
  $\cP_d^{\mathbf{c}\paOm{\langle d]}{\geq 2}}$, where 
  $X_\omega$ is the union of the $(d-1)$-cell 
  $\mathring\sR^\omega_d$ with the two adjacent $d$-cells that contain 
  $\mathring{\sR}^\omega_d$ in their boundary. 

  Each $X_\omega$ is open in $\cP_d^{\bfc \paOm{\langle d]}{\geq 2}}$. 
  Indeed, any point $x \in X_\omega$ either  (1) lies  in one of the two $d$-cells
  and thus has an open neighborhood in 
  $\cP_d^{\bfc \paOm{\langle d]}{\geq 2}}$ (contained in that 
  $d$-cell), or (2) $x \in \mathring{\sR}_d^\omega$, in which case it has on 
  open neighborhood $X_\omega$ in 
  $\cP_d^{\bfc \paOm{\langle d]}{\geq 2}}$.  

  By \cite[Lemma 2.4]{Ka} the attaching maps $\phi: D^d \to \cP_d$ of the 
  $d$-cells $D^d$ are injective on the $\phi$-preimage of each open 
  $(d-1)$ cell in $\cP_d$.
  This implies that $X_\omega$ retracts to the $\mathring{\sR}_d^\omega$,
  which, in turn, is contractible.
  For $i \geq 2$ and for pairwise distinct compositions 
  $\omega_1,\ldots, \omega_i \in \paOm{\langle d]}{=1}$, 
  the intersection $\bigcap_{j=1}^i X_{\omega_i}$ is either 
  empty, or is one of the open $d$-cells $\mathring{\sR}_d^\om$ for some
  $\om \in \paOm{\langle d]}{=0}$. It follows that, for $i \geq 1$ and for 
  compositions $\omega_1,\ldots, \omega_i \in \paOm{\langle d]}{=1}$, 
  the intersection $\bigcap_{j=1}^i X_{\omega_i}$ is either empty or 
  contractible.

  The preceding arguments show that the assumptions of 
  \cite[Corollary 4G.3]{Ha} are satisfied for the open
  covering $\mathcal X$ of $\cP_d^{\bfc \paOm{\langle d]}{\geq 2}}$.
  Hence $\cP_d^{\bfc \paOm{\langle d]}{\geq 2}}$ is
  homotopy equivalent to the nerve $N_\mathcal X$ of the covering $\mathcal X$. 
  We can identify  $N_\mathcal X$ with the simplicial complex, whose 
	simplices are the non-empty subsets $A$ of $\paOm{\langle d]}{=1}$
  such that $\bigcap_{\om \in A} X_\omega$ is non-empty. 
So the maximal simplices of the nerve $N_\mathcal X$ are in bijection with the elements of 
	$\paOm{\langle d]}{=0}$. 
	
	The maximal simplex, corresponding to 
	$\om' \in \paOm{\langle d]}{=0}$, contains all $\om \in \paOm{\langle d]}{= 1}$
  for which $\mathring{\sR}_d^{\om'} \subseteq X_\omega$.
  The intersection of the two maximal simplices, corresponding to
	$\omega',\omega'' \in \paOm{\langle d]}{= 0}$, is labeled by  all
	$\om \in \paOm{\langle d]}{=1}$ for which there are edges from 
  $\omega'$ to $\om$ and from $\omega''$ to $\om$ in $\Gr_d$.     

  The graph $\Gr_d$ can be covered by $1$-dimensional
  subcomplexes $Y_\omega$ , $\omega \in \paOm{\langle d]}{=1}$,
  where $Y_\omega$ 
  is the union of the two edges in $\Gr_d$, containing $\omega$. 
  It is easy to check that the nerve of this covering 
  $\mathcal Y := \{Y_\om\}_{\omega \in \paOm{\langle d]}{=1}}$ is
  again $N_\mathcal X$. 
  In fact, under the embedding 
  $\mathcal E: \Gr_d \to \cP_d^{\mathbf {c}\paOm{\langle d ]}{\geq 2}}$, 
  one has $Y_\om = X_\om \cap \mathcal E(\Gr_d)$.

  By \cite[Theorem 10.6]{Bj}, the nerve $N_\mathcal Y$ and the graph 
  $\Gr_d$ are homotopy equivalent.

  Moreover, by the proof of \cite[Corollary 4G.3]{Ha}, the following claim is 
  valid. Consider an embedding $Y \hookrightarrow X$ of a paracompact space 
  $Y$ into a paracompact space $X$ and a locally finite open covering 
  $\mathcal X = \{X_\a\}_\a$ of $X$. Put  
  $\mathcal Y = \{Y_\a := X_\a \cap Y\}_\a$. If, for any nonempty intersection 
  $\cap_i X_{\a_i}$, the intersection $\cap_i Y_{\a_i} \neq \emptyset$, 
  and both intersections are contractible, then the nerves $N_\mathcal X$ and 
  $N_\mathcal Y$ are naturally isomorphic as simplicial complexes, and 
  $Y \hookrightarrow X$ is a homotopy equivalence.
  
  We conclude that the embedding 
  $\mathcal E: \Gr_d \to \cP_d^{\bfc\paOm{\langle d ]}{\geq 2}}$ is a 
  homotopy equivalence. The result now follows from \ref{lem:graph}. 
\end{proof}

\begin{figure}
  \begin{center}
	  \includegraphics[width=0.6\textwidth]{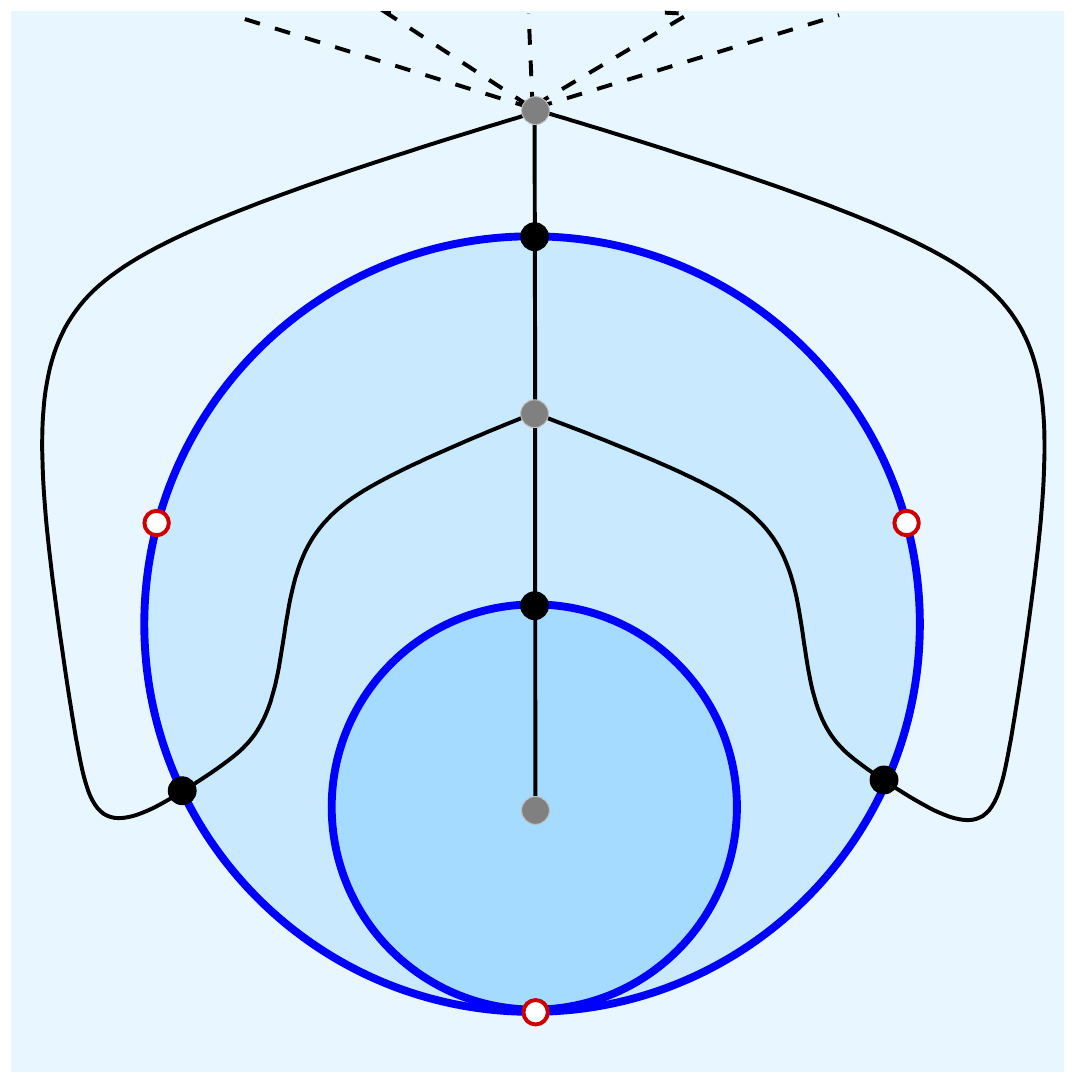}%
  \end{center}
  \setlength{\unitlength}{4144sp}%
  \begingroup\makeatletter\ifx\SetFigFont\undefined%
    \gdef\SetFigFont#1#2#3#4#5{%
    \reset@font\fontsize{#1}{#2pt}%
    \fontfamily{#3}\fontseries{#4}\fontshape{#5}%
    \selectfont}%
  \fi\endgroup%
  \begin{picture}(8193,8373)(1777,-7795)
	  \put(5500,1800){\makebox(0,0)[lb]{\smash{{\SetFigFont{12}{12}{\rmdefault}{\mddefault}{\updefault}{\color[rgb]{0.4,0.4,0.4}$\mathbf{()}$}%
    }}}}
	  \put(5500,3400){\makebox(0,0)[lb]{\smash{{\SetFigFont{12}{12}{\rmdefault}{\mddefault}{\updefault}{\color[rgb]{0.4,0.4,0.4}$\mathbf{(11)}$}%
    }}}}
	  \put(5750,4550){\makebox(0,0)[lb]{\smash{{\SetFigFont{12}{12}{\rmdefault}{\mddefault}{\updefault}{\color[rgb]{0.4,0.4,0.4}$\mathbf{(1111)}$}%
    }}}}
	  \put(5245,0732){\makebox(0,0)[lb]{\smash{{\SetFigFont{10}{10}{\rmdefault}{\mddefault}{\updefault}{\color[rgb]{0,0,0}$\mathbf{(22)}$}%
    }}}}
	  \put(5500,2700){\makebox(0,0)[lb]{\smash{{\SetFigFont{11}{11}{\rmdefault}{\mddefault}{\updefault}{\color[rgb]{0,0,0}$\mathbf{(2)}$}%
    }}}}
	  \put(6750,1500){\makebox(0,0)[lb]{\smash{{\SetFigFont{11}{11}{\rmdefault}{\mddefault}{\updefault}{\color[rgb]{0,0,0}$\mathbf{(211)}$}%
    }}}}
	  \put(3600,1500){\makebox(0,0)[lb]{\smash{{\SetFigFont{11}{11}{\rmdefault}{\mddefault}{\updefault}{\color[rgb]{0,0,0}$\mathbf{(112)}$}%
    }}}}
	  \put(3900,2700){\makebox(0,0)[lb]{\smash{{\SetFigFont{10}{10}{\rmdefault}{\mddefault}{\updefault}{\color[rgb]{0,0,0}$\mathbf{(13)}$}%
    }}}}
	  \put(6570,2700){\makebox(0,0)[lb]{\smash{{\SetFigFont{10}{10}{\rmdefault}{\mddefault}{\updefault}{\color[rgb]{0,0,0}$\mathbf{(31)}$}%
    }}}}
	  \put(5500,4150){\makebox(0,0)[lb]{\smash{{\SetFigFont{11}{11}{\rmdefault}{\mddefault}{\updefault}{\color[rgb]{0,0,0}$\mathbf{(121)}$}%
    }}}}
	  \put(5150,5000){\makebox(0,0)[lb]{\smash{{\SetFigFont{7}{7}{\rmdefault}{\mddefault}{\updefault}{\color[rgb]{0,0,0}$\mathbf{(11211)}$}%
    }}}}
	  \put(5800,5000){\makebox(0,0)[lb]{\smash{{\SetFigFont{7}{7}{\rmdefault}{\mddefault}{\updefault}{\color[rgb]{0,0,0}$\mathbf{(11121)}$}%
    }}}}
	  \put(6400,5000){\makebox(0,0)[lb]{\smash{{\SetFigFont{7}{7}{\rmdefault}{\mddefault}{\updefault}{\color[rgb]{0,0,0}$\mathbf{(11112)}$}%
    }}}}
	  \put(4500,5000){\makebox(0,0)[lb]{\smash{{\SetFigFont{7}{7}{\rmdefault}{\mddefault}{\updefault}{\color[rgb]{0,0,0}$\mathbf{(12111)}$}%
    }}}}
	  \put(3900,5000){\makebox(0,0)[lb]{\smash{{\SetFigFont{7}{7}{\rmdefault}{\mddefault}{\updefault}{\color[rgb]{0,0,0}$\mathbf{(21111)}$}%
    }}}}
  \end{picture}%
  ~\vskip-19cm
  \caption{A slice through the celluation of $\cP_6^{\bfc \paOm{\langle 6]}{\geq 2}}$ together with the graph $\Gr_6$, dual to the celluation (shown by black curves). Patterns $(22), (13), (31)$ label the strata of codimension $2$, while the arcs, labeled by $(2), (211), (121), (112)$, represent the strata of codimension $1$.}
    \label{fig1a}
\end{figure}

\begin{center}
\begin{figure}
  \begin{picture}(0,0)%
	  \includegraphics[width=\textwidth]{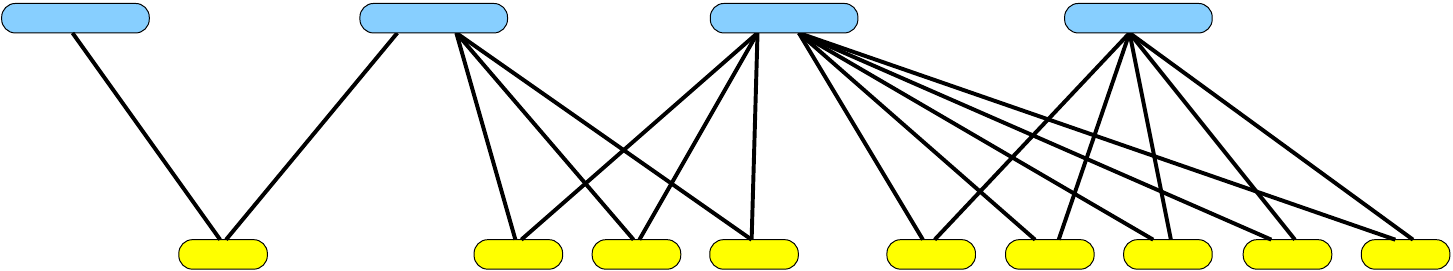}%
  \end{picture}%
   \setlength{\unitlength}{4144sp}%
  \begingroup\makeatletter\ifx\SetFigFont\undefined%
  \gdef\SetFigFont#1#2#3#4#5{%
  \reset@font\fontsize{#1}{#2pt}%
  \fontfamily{#3}\fontseries{#4}\fontshape{#5}%
  \selectfont}%
  \fi\endgroup%
  \begin{picture}(10599,2109)(664,-4161)
	  \put(1000,-2920){\rotatebox{0.0}{\makebox(0,0)[lb]{\smash{{\SetFigFont{7}{7}{\rmdefault}{\mddefault}{\updefault}{\color[rgb]{0,0,0}$\mathbf{()}$}%
  }}}}}
	  \put(2730,-2920){\rotatebox{0.0}{\makebox(0,0)[lb]{\smash{{\SetFigFont{7}{7}{\rmdefault}{\mddefault}{\updefault}{\color[rgb]{0,0,0}$\mathbf{(11)}$}%
  }}}}}
	  \put(4400,-2920){\rotatebox{0.0}{\makebox(0,0)[lb]{\smash{{\SetFigFont{7}{7}{\rmdefault}{\mddefault}{\updefault}{\color[rgb]{0,0,0}$\mathbf{(1111)}$}%
  }}}}}
	  \put(6130,-2920){\rotatebox{0.0}{\makebox(0,0)[lb]{\smash{{\SetFigFont{7}{7}{\rmdefault}{\mddefault}{\updefault}{\color[rgb]{0,0,0}$\mathbf{(\!111111\!)}$}%
  }}}}}

	  \put(1700,-4115){\rotatebox{0.0}{\makebox(0,0)[lb]{\smash{{\SetFigFont{7}{7}{\rmdefault}{\mddefault}{\updefault}{\color[rgb]{0,0,0}$\mathbf{(2)}$}%
  }}}}}
	  \put(3110,-4115){\rotatebox{0.0}{\makebox(0,0)[lb]{\smash{{\SetFigFont{7}{7}{\rmdefault}{\mddefault}{\updefault}{\color[rgb]{0,0,0}$\mathbf{(121)}$}%
  }}}}}
	  \put(3700,-4115){\rotatebox{0.0}{\makebox(0,0)[lb]{\smash{{\SetFigFont{7}{7}{\rmdefault}{\mddefault}{\updefault}{\color[rgb]{0,0,0}$\mathbf{(211)}$}%
  }}}}}
	  \put(4290,-4115){\rotatebox{0.0}{\makebox(0,0)[lb]{\smash{{\SetFigFont{7}{7}{\rmdefault}{\mddefault}{\updefault}{\color[rgb]{0,0,0}$\mathbf{(112)}$}%
  }}}}}
	  \put(5130,-4115){\rotatebox{0.0}{\makebox(0,0)[lb]{\smash{{\SetFigFont{7}{7}{\rmdefault}{\mddefault}{\updefault}{\color[rgb]{0,0,0}$\mathbf{(\!11211\!)}$}%
  }}}}}
	  \put(5730,-4115){\rotatebox{0.0}{\makebox(0,0)[lb]{\smash{{\SetFigFont{7}{7}{\rmdefault}{\mddefault}{\updefault}{\color[rgb]{0,0,0}$\mathbf{(\!12111\!)}$}%
  }}}}}
	  \put(6320,-4115){\rotatebox{0.0}{\makebox(0,0)[lb]{\smash{{\SetFigFont{7}{7}{\rmdefault}{\mddefault}{\updefault}{\color[rgb]{0,0,0}$\mathbf{(\!11121\!)}$}%
  }}}}}
	  \put(6925,-4115){\rotatebox{0.0}{\makebox(0,0)[lb]{\smash{{\SetFigFont{7}{7}{\rmdefault}{\mddefault}{\updefault}{\color[rgb]{0,0,0}$\mathbf{(\!21111\!)}$}%
  }}}}}
	  \put(7520,-4115){\rotatebox{0.0}{\makebox(0,0)[lb]{\smash{{\SetFigFont{7}{7}{\rmdefault}{\mddefault}{\updefault}{\color[rgb]{0,0,0}$\mathbf{(\!11112\!)}$}%
  }}}}}
  \end{picture}%
  \caption{The graph $\Gr_6$ as part of the poset $\bfOm_{\langle 6]}$}
	\label{fig2a}
\end{figure}
\end{center}

\begin{center}
\begin{figure}
  \begin{picture}(0,0)%
	  \includegraphics[width=\textwidth]{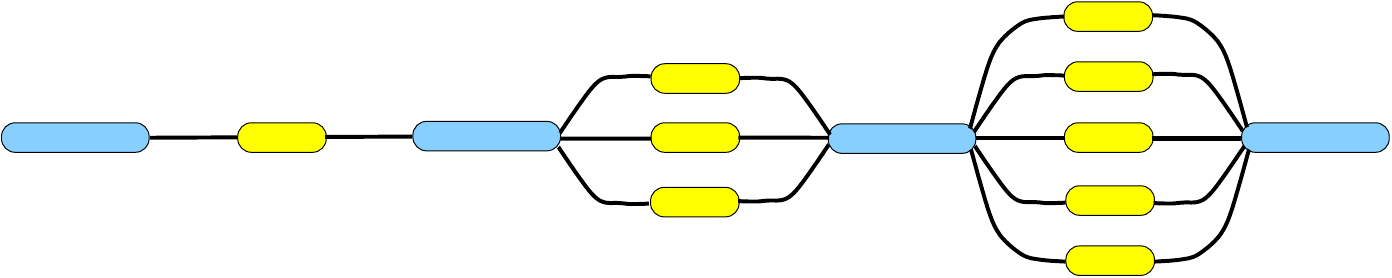}%
  \end{picture}%
   \setlength{\unitlength}{4144sp}%
  \begingroup\makeatletter\ifx\SetFigFont\undefined%
  \gdef\SetFigFont#1#2#3#4#5{%
  \reset@font\fontsize{#1}{#2pt}%
  \fontfamily{#3}\fontseries{#4}\fontshape{#5}%
  \selectfont}%
  \fi\endgroup%
  \begin{picture}(10599,2109)(664,-4161)
	  \put(1000,-3460){\rotatebox{0.0}{\makebox(0,0)[lb]{\smash{{\SetFigFont{7}{7}{\rmdefault}{\mddefault}{\updefault}{\color[rgb]{0,0,0}$\mathbf{()}$}%
  }}}}}
	  \put(2060,-3460){\rotatebox{0.0}{\makebox(0,0)[lb]{\smash{{\SetFigFont{7}{7}{\rmdefault}{\mddefault}{\updefault}{\color[rgb]{0,0,0}$\mathbf{(2)}$}%
  }}}}}
	  \put(3060,-3460){\rotatebox{0.0}{\makebox(0,0)[lb]{\smash{{\SetFigFont{7}{7}{\rmdefault}{\mddefault}{\updefault}{\color[rgb]{0,0,0}$\mathbf{(11)}$}%
  }}}}}
	  \put(4150,-3460){\rotatebox{0.0}{\makebox(0,0)[lb]{\smash{{\SetFigFont{7}{7}{\rmdefault}{\mddefault}{\updefault}{\color[rgb]{0,0,0}$\mathbf{(121)}$}%
  }}}}}
	  \put(4150,-3150){\rotatebox{0.0}{\makebox(0,0)[lb]{\smash{{\SetFigFont{7}{7}{\rmdefault}{\mddefault}{\updefault}{\color[rgb]{0,0,0}$\mathbf{(211)}$}%
  }}}}}
	  \put(4150,-3800){\rotatebox{0.0}{\makebox(0,0)[lb]{\smash{{\SetFigFont{7}{7}{\rmdefault}{\mddefault}{\updefault}{\color[rgb]{0,0,0}$\mathbf{(112)}$}%
  }}}}}
	  \put(5160,-3460){\rotatebox{0.0}{\makebox(0,0)[lb]{\smash{{\SetFigFont{7}{7}{\rmdefault}{\mddefault}{\updefault}{\color[rgb]{0,0,0}$\mathbf{(1111)}$}%
  }}}}}
	  \put(6270,-3460){\rotatebox{0.0}{\makebox(0,0)[lb]{\smash{{\SetFigFont{7}{7}{\rmdefault}{\mddefault}{\updefault}{\color[rgb]{0,0,0}$\mathbf{(\!11211\!)}$}%
  }}}}}
	  \put(6270,-3140){\rotatebox{0.0}{\makebox(0,0)[lb]{\smash{{\SetFigFont{7}{7}{\rmdefault}{\mddefault}{\updefault}{\color[rgb]{0,0,0}$\mathbf{(\!12111\!)}$}%
  }}}}}
	  \put(6270,-3790){\rotatebox{0.0}{\makebox(0,0)[lb]{\smash{{\SetFigFont{7}{7}{\rmdefault}{\mddefault}{\updefault}{\color[rgb]{0,0,0}$\mathbf{(\!11121\!)}$}%
  }}}}}
	  \put(6270,-2830){\rotatebox{0.0}{\makebox(0,0)[lb]{\smash{{\SetFigFont{7}{7}{\rmdefault}{\mddefault}{\updefault}{\color[rgb]{0,0,0}$\mathbf{(\!21111\!)}$}%
  }}}}}
	  \put(6270,-4110){\rotatebox{0.0}{\makebox(0,0)[lb]{\smash{{\SetFigFont{7}{7}{\rmdefault}{\mddefault}{\updefault}{\color[rgb]{0,0,0}$\mathbf{(\!11112\!)}$}%
  }}}}}
	  \put(7310,-3460){\rotatebox{0.0}{\makebox(0,0)[lb]{\smash{{\SetFigFont{7}{7}{\rmdefault}{\mddefault}{\updefault}{\color[rgb]{0,0,0}$\mathbf{(\!111111\!)}$}%
  }}}}}
  \end{picture}%
  \caption{The graph $\Gr_6$, drawn as it is embedded in $\cP^{\bfc\paOm{\langle 6]}{\leq 2}}_6$}
	\label{fig2}
\end{figure}
\end{center}

Instead of $\Gr_d$ we can also consider the graph $\Gr_d'$ in which any 
pair of edges $e,e'$, forming a path that joins two different nodes  labeled 
by elements from $\paOm{\langle d]}{=0}$, is replaced by a single edge labeled by the unique $\omega \in \paOm{\langle d]}{=1}$ in the intersection of $e$ and $e'$.
Thus $\Gr_d$ is the graph subdivision of $\Gr_d'$ and in particular, we have
$\pi_1(\Gr_d) \cong \pi_1(\Gr_d')$ when both are considered as $1$-dimensional
simplicial complexes.
We will use the graph $\Gr_d'$ in a second slightly different and perhaps more 
natural approach to the calculation of 
$\pi_1(\cP_d^{\bfc\paOm{\langle d]}{\geq 2}})$.\smallskip
 
For that we introduce the alphabet $\mathsf A$ with 
letters
$\omega_{ij} = (\underbrace{1,\dots,1}_{i},2,\underbrace{1,\ldots, 1}_j)$, 
where $0 \leq i,j$, $i+j \leq d-2$ and $i+j \equiv d \mod 2$. 

\smallskip
(See illustrations in \ref{fig1a}, \ref{fig2a} and \ref{fig2}).  

\medskip
We consider now the free group, 
generated by the letters from $\mathsf A$. The letters 
are in obvious bijection with the edges of the graph $\Gr_d'$. For a letter 
$\omega \in \mathsf A$,  we use 
$\omega^+= \omega$ or its inverse $\omega^-= \omega^{-1}$ to capture 
the orientations of these edges 
and to represent elements of the free group
generated by $\mathsf A$ by words. 
We write $\mathsf A^{\pm}$ for the set of letters $\omega^\pm$
where $\omega \in \mathsf A$. 
We use the convention that 
the sign ''+'' corresponds to the orientation from the vertex with less 
ones towards the vertex with more ones, and the sign ''-'' corresponds to 
the opposite orientation. 

We choose $()$ as the basepoint of $\Gr_d'$ if $d$ is even, and 
$(1)$ as the basepoint if $d$ is odd.
In particular, we can consider $\pi_1(\Gr_d')$ as a subgroup of the free
group over $\mathsf A$. 

Next, we introduce a class of words which will be used to define canonical 
representatives of elements from $\pi_1(\Gr_d')$.

\begin{definition}\label{admissible} 
  For $d\geq 1$, we say that a word $w$ in the alphabet $\mathsf A^\pm$ is 
  {\it admissible} if it is either empty or satisfies the following two conditions:

  \begin{itemize}
    \item[(a$_e$)] if $d$ is even, then 
       $w$ starts with the letter $(2)^+$ and ends with the letter 
       $(2)^-$;

    \item[(a$_o$)] if $d$ is odd, then $w$ starts with either the letter $(12)^+$ or with the letter $(21)^+$ and ends either with  $(12)^-$ or with $(21)^-$. 

    \item[(b)] any pair $(\om_1^\pm, \om_2^\pm)$ of two consecutive letters 
       has the property that the number of $1$'s in $\om_1$ and $\om_2$ 
       either coincide or differs by two. In the former case, the signs of 
       the letters are different, and in the latter case, the signs 
       should be as follows. If $\om_1$ has less ones than $\om_2$, then we 
       only allow the pair $(\om_1^+, \om_2^+)$;  if $\om_1$ has 
       more ones than $\om_2$, then we only allow the pair 
       $(\om_1^-, \om_2^-)$.
  \end{itemize}
\end{definition}

Clearly, admissible words are in bijection with based loops in the graph  
$\Gr_d'$.
Here are two examples of admissible words representing based loops, the first one for even $d \geq 6$ and 
second one for odd $d \geq 7$: 
$$w_1=(2)^+\,(112)^+ \, (121)^- \, (211)^+ \, (11112)^+ \,(12111)^- \,(121)^- \, (2)^-,$$ 
$$w_2= (21)^+ \, (1112)^+ \, (112111)^+ \, (111211)^- \, (1211)^- \, (12)^-.$$

We observe that a word is admissible if and only if its reduction (in the sense
of combinatorial group theory) is admissible. \smallskip

Let $\mathcal G_d$ be the set of reduced admissible words over $\mathsf A^{\pm}$. 
To each $\omega_{ij}^{+} \in \mathsf A^\pm$, we associate the word 
$\gamma_{ij}$ given by
\begin{itemize}
	\item $$\gamma_{ij} = \omega_{0,0}^+\,\omega_{0,2}^+\,\cdots \, \omega_{0,i+j-2}^+\, \omega_{ij}^+\, 
                          \omega_{0,i+j}^-\, \cdots \,\omega_{0,2}^-\,  \omega_{0,0}^-$$ 
if $d$ is even and
		  \item $$\gamma_{ij} = \omega_{0,1}^+\,\omega_{0,3}^+\, \cdots \,\omega_{0,i+j-2}^+ \,\omega_{ij}^+\,  
                          \omega_{0,i+j}^-\, \cdots \, \omega_{0,3}^-\,  \omega_{0,1}^-$$ 

if $d$ is odd.
\end{itemize}

All $\gamma_{ij}$ are admissible, and if $i \neq 0$ then $\gamma_{ij}$ is reduced. We notice that each $\gamma_{ij}$ represents a single \emph{loop} in $\Gr_d'$.

The following is then a standard fact about graphs considered as $1$-dimensional CW-complexes.
 
\begin{lemma}\label{lm:group} 
  The set $\mathcal G_d$ is a subgroup of the free group over the alphabet 
  $\mathsf A$, isomorphic to $\pi_1(\Gr_d')$. 
  In particular, $\mathcal G_d$ is a free group itself.
  The $\gamma_{ij}$ for $0 \leq i,j$, $i \neq 0$ and $i+j \leq d-2$ form
  a minimal generating set of $\mathcal G_d$.  
\end{lemma}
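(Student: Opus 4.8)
The plan is to recognize this as the standard description of the fundamental group of a connected graph via a spanning tree, and to make the correspondence with the explicit words $\gamma_{ij}$ concrete. First I would fix the basepoint $v_0$ (the vertex $()$ for even $d$, the vertex $(1)$ for odd $d$) and identify a spanning tree $T$ of $\Gr_d'$: namely the subtree consisting of all vertices together with the edges $\omega_{0,k}$, i.e. the edges incident to the "all ones" vertices that have the single $2$ in the first slot, together with the chain of $\omega_{0,0},\omega_{0,2},\dots$ (resp.\ $\omega_{0,1},\omega_{0,3},\dots$) joining the basepoint up through the vertices $(\underbrace{1,\dots,1}{k})$. One checks that $T$ is a tree: it is connected (each "all ones" vertex is reached from $v_0$ by the chain of $\omega_{0,*}$ edges, and the remaining vertices of $\Gr_d'$ are exactly the $\omega{ij}$-labelled ones which are leaves attached to "all ones" vertices) and has the right number of vertices and edges (using the count from \ref{lem:graph}). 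The edges \emph{not} in $T$ are precisely the $\omega_{ij}$ with $i \neq 0$, and there are $\frac{d(d-2)}{4}$ (resp.\ $\frac{(d-1)^2}{4}$) of them, matching \ref{lem:graph}.

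Next I would invoke the standard fact: for a connected graph $\Gr'$ with spanning tree $T$ and basepoint $v_0$, the fundamental group $\pi_1(\Gr',v_0)$ is free, with a basis given by the loops $\ell_e$, one for each edge $e \notin T$, where $\ell_e$ is the path in $T$ from $v_0$ to the initial vertex of $e$, followed by $e$, followed by the path in $T$ from the terminal vertex of $e$ back to $v_0$. I would then simply observe that, under the identification of edges with letters of $\mathsf A$ and the sign conventions fixed above, the loop $\ell_{\omega_{ij}}$ associated to the non-tree edge $\omega_{ij}$ ($i \neq 0$) is exactly the word $\gamma_{ij}$: the ascending chain $\omega_{0,0}^+\omega_{0,2}^+\cdots\omega_{0,i+j-2}^+$ (resp.\ starting from $\omega_{0,1}^+$) is the $T$-path from $v_0$ up to the vertex $(\underbrace{1,\dots,1}{i+j})$, which is one endpoint of the edge $\omega{ij}$; then $\omega_{ij}^+$ traverses that edge to the vertex $(\underbrace{1,\dots,1}_{i+j+2})$; and the descending chain $\omega_{0,i+j}^-\cdots\omega_{0,2}^-\omega_{0,0}^-$ is the $T$-path back to $v_0$. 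Admissibility and reducedness (for $i \neq 0$) of $\gamma_{ij}$ were already noted before the lemma statement, and the sign conventions in \ref{admissible} are exactly those recording "toward more ones'' $= +$. Finally, since $\mathcal G_d$ was defined as the set of reduced admissible words, which are in bijection with based loops in $\Gr_d'$, and the group operation on $\pi_1(\Gr_d')$ corresponds to concatenation-then-reduction, $\mathcal G_d$ is literally a model for $\pi_1(\Gr_d')$ as a subgroup of the free group over $\mathsf A$; freeness then follows from the Nielsen--Schreier theorem, or more directly from the spanning tree description.

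I do not expect a genuine obstacle here — this is a textbook computation. The one point requiring a little care is the verification that $T$ as described is indeed a spanning tree: one must check that \emph{every} vertex of $\Gr_d'$ is either an "all ones'' vertex on the chain or a vertex $\omega_{ij}$ that is a leaf of $T$, and that the $\omega_{0,*}$ edges really do form a simple path without creating cycles. This is immediate from the explicit description of $\Gr_d'$ (its vertices are $\paOm{\langle d]}{=0} \cup \paOm{\langle d]}{=1}$, its edges are the $\omega_{ij}$'s, and the adjacencies are the single insertion/merge relations), but it is the step where the combinatorics of the poset is actually used. A secondary bookkeeping point is matching the \emph{orientations}: one should confirm that with the stated sign convention the word read off from $\ell_{\omega_{ij}}$ has exactly the signs appearing in $\gamma_{ij}$ — ascending part all $+$, the edge $\omega_{ij}$ itself $+$, descending part all $-$ — which is exactly what \ref{admissible}(b) and the paragraph fixing $\omega^\pm$ prescribe. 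Having done this, the claim "the $\gamma_{ij}$ with $i\neq 0$ form a minimal generating set'' is precisely the standard spanning-tree basis, and minimality follows since a free group of rank $r$ cannot be generated by fewer than $r$ elements.
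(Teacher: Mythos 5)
Your argument is correct and is exactly the standard spanning-tree computation that the paper itself invokes without proof (the lemma is stated there as ``a standard fact about graphs considered as $1$-dimensional CW-complexes''): the path formed by the edges $\omega_{0,*}$ through all the all-ones vertices is a spanning tree of $\Gr_d'$, the non-tree edges are precisely the $\omega_{ij}$ with $i\neq 0$, and the associated tree-conjugated loops are literally the words $\gamma_{ij}$. One small slip to fix in the write-up: in $\Gr_d'$ the $\omega_{ij}$ label \emph{edges}, not vertices (they are vertices only in the subdivision $\Gr_d$), so the parenthetical claim that ``the remaining vertices of $\Gr_d'$ are the $\omega_{ij}$-labelled ones, leaves of $T$'' should be dropped --- your tree is simply a path containing every vertex of $\Gr_d'$, and the rest of the argument goes through unchanged.
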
 

Note each $\gamma_{0,j}$ is $1$ in $\mathcal G_d$. But for our geometric 
picture and for the sake of a less technical presentation, we will continue 
working with the $\gamma_{0,j}$.

Now we would like to relate this representation of $\pi_1(\Gr_d')$ to 
$\pi_1(\cP_d^{\bfc\paOm{\langle d]}{\geq 2}})$. 
We use the notations from the proof of \ref{lm2b}. 

For $\omega_{ij} \in \mathsf A$,  
consider the codimension one 
``wall'' $\mathring\sR^{\omega_{ij}}_d$.
The walls divide $\cP_d^{\bfc\paOm{\langle d]}{\geq 2}}$ into a 
set of open 
$d$-cells $\{\mathring\sR^\omega_d\}$ for
$\om = (\underbrace{1,\ldots,1}_{i}) \in \paOm{\langle d]}{=0}$, 
where $0 \leq i \leq d$ is even when $d$ is even and odd when $d$ is odd.

We orient each wall $\mathring\sR^{\omega_{ij}}_d$ in such a way that crossing 
it in the preferred direction increases the number of simple real 
roots by $2$. We consider this as a ``$+$''-crossing and the  crossing in the
opposite direction as a ``$-$''-crossing. 
This notation goes along with our convention for orienting the edges in 
$\Gr_d'$. 

Consider an oriented loop 
$\gamma: S^1 \to \cP_d^{\bfc\paOm{\langle d]}{\geq 2}}$,
based in $\mathring\sR^{()}_d$ if $d$ is even and in 
$\mathring\sR^{(1)}_d$ if $d$ is odd.
By the general position arguments, we may assume that $\gamma$ is smooth and 
transversal to each wall. In particular, since $S^1$ is 
compact, the (transversal) intersection of $\gamma(S^1)$ with each wall 
is a finite set. As we move along $\gamma$, we record each 
transversal crossing 
$\gamma \cap \mathring{\sR}_d^{\omega_{ij}}$ and the direction of the 
crossing by the corresponding letter in $\mathsf A^{\pm}$. 

\medskip

\begin{lemma}\label{lm:words} 
  The homotopy classes of based loops $\gamma \subset 
  \cP_d^{\bfc\paOm{\langle d]}{\geq 2}}$ and the homotopy classes of 
  based loops in $\Gr_d'$ are in 
  one-to-one correspondence with the reduced admissible words 
  over the alphabet $\mathsf A^\pm$. 
  In addition, concatenation of loops corresponds to the concatenation 
  and reduction of words.
  
  In particular, 
  $\pi_1(\cP_d^{\bfc\paOm{\langle d]}{\geq 2}}) \simeq \mathcal G_d
  \simeq \pi_1(\Gr_d')$ is a free group.
\end{lemma}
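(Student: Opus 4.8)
The statement bundles together three things: (i) a bijection between homotopy classes of based loops in $\cP_d^{\bfc\paOm{\langle d]}{\geq 2}}$ and reduced admissible words over $\mathsf A^\pm$, (ii) the analogous bijection for based loops in $\Gr_d'$, and (iii) compatibility of concatenation with word reduction, yielding the group isomorphism $\pi_1(\cP_d^{\bfc\paOm{\langle d]}{\geq 2}}) \simeq \mathcal G_d \simeq \pi_1(\Gr_d')$. The cleanest route is to deduce everything from \ref{lm2b} (which already gives $\mathcal E : \Gr_d \hookrightarrow \cP_d^{\bfc\paOm{\langle d]}{\geq 2}}$ as a homotopy equivalence) together with \ref{lm:group} (which identifies $\pi_1(\Gr_d') = \mathcal G_d$), leaving only the \emph{combinatorial dictionary}: that the word recorded by the crossing sequence of a transversal loop $\gamma$ is exactly an admissible word, and that reduced admissible words are in bijection with homotopy classes.

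\smallskip

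First I would set up the wall-crossing language precisely. Since $\cP_d^{\bfc\paOm{\langle d]}{\geq 2}}$ is an open $d$-manifold stratified into $d$-cells $\mathring\sR^{(1^i)}_d$ (indexed by $i$ of the right parity) separated by the oriented walls $\mathring\sR^{\omega_{ij}}_d$, a smooth loop $\gamma$ transversal to all walls, based in a $d$-cell, determines a finite cyclic sequence of signed letters in $\mathsf A^\pm$; cutting at the basepoint gives a word $w(\gamma)$. The key local observation is: two consecutive walls crossed by $\gamma$ bound a common $d$-cell, so if $\gamma$ passes from cell $\mathring\sR^{(1^i)}_d$ through wall $\omega$ into cell $\mathring\sR^{(1^k)}_d$ and then through wall $\omega'$, then $\omega$ and $\omega'$ both have $\mathring\sR^{(1^k)}_d$ in their boundary. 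By the cell-adjacency description from \ref{lm1} (a wall $\omega_{ab}$ is adjacent to $(1^{a+b})$ via insertion and to $(1^{a+b+2})$ via merge), this forces the number of $1$'s in $\omega$ and $\omega'$ to differ by $0$ or $2$ with exactly the sign pattern prescribed in clause (b) of \ref{admissible}, and the endpoint clauses (a$_e$)/(a$_o$) just record which walls border the basepoint cell $\mathring\sR^{()}_d$ resp.\ $\mathring\sR^{(1)}_d$. Conversely, every admissible word is realized by such a loop, since consecutive admissibility is precisely the condition that the corresponding edges of $\Gr_d'$ compose into a path. This establishes that $w(\gamma)$ is admissible and that $\gamma \mapsto w(\gamma)$ matches the embedding $\mathcal E$: a loop in $\Gr_d' \cong \mathcal E(\Gr_d)$ reads off its edge-word, which is its admissible word.

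\smallskip

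Next I would handle homotopy invariance and reduction. A generic homotopy between transversal loops crosses walls in a $1$-parameter family; the only codimension-one events are (a) a pair of adjacent transversal intersections with the \emph{same} wall being created or cancelled (a tangency/Morse birth-death), which inserts or deletes a cancelling pair $\omega^+\omega^-$ or $\omega^-\omega^+$ — i.e.\ a free reduction of the word — and (b) the loop sweeping across a codimension-$2$ stratum; but such strata have been removed in $\cP_d^{\bfc\paOm{\langle d]}{\geq 2}}$, so (b) does not occur and no ``braid-type'' relations are introduced. Hence homotopy classes of transversal based loops biject with free-equivalence classes of admissible words, i.e.\ with reduced admissible words (using the already-noted fact that a word is admissible iff its reduction is), and concatenation of loops corresponds to concatenation-then-reduction of words. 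Since $\mathcal E$ is a homotopy equivalence by \ref{lm2b}, the same bijection holds for based loops in $\Gr_d'$, and all three sets — $\pi_1(\cP_d^{\bfc\paOm{\langle d]}{\geq 2}})$, $\mathcal G_d$, $\pi_1(\Gr_d')$ — are identified as groups; freeness then follows from \ref{lm:group} (or directly since $\Gr_d'$ is a graph).

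\smallskip

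The main obstacle is step two: rigorously controlling the codimension-one degenerations of a generic homotopy of loops and confirming that no relation beyond free reduction appears. This is where it matters that we have deleted \emph{all} of codimension $\geq 2$ — the absence of triple-point / two-double-root strata in the complement is exactly what prevents the braid relations that show up in the analogous complex story. I would make this precise by a transversality/general-position argument for the evaluation map $S^1 \times [0,1] \to \cP_d^{\bfc\paOm{\langle d]}{\geq 2}}$ against the stratification, using that the top stratum of the wall-complex is a disjoint union of smooth hypersurfaces so that generic $2$-parameter families meet them in a $1$-manifold with only fold singularities over the homotopy parameter. Everything else is bookkeeping with the merge/insert combinatorics already laid out.
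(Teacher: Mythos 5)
Your proposal is correct and takes essentially the same route as the paper's (sketched) proof: record transversal wall crossings to get admissible words, check that a generic homotopy only creates or cancels adjacent pairs $\omega^{+}\omega^{-}$ because all codimension-two strata have been removed, and identify the result with $\pi_1(\Gr_d') \cong \mathcal G_d$ via \ref{lm2b} and \ref{lm:group}. Your general-position analysis in fact makes explicit what the paper only asserts; the one point to keep straight is that injectivity (distinct reduced admissible words give non-homotopic loops) is deduced from the graph side through the homotopy equivalence $\mathcal E$ and \ref{lm:group}, not from the wall-crossing analysis alone, which by itself only yields well-definedness and surjectivity --- exactly as your opening plan indicates.
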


\begin{proof}[Sketch of Proof:]
  For $\pi_1(\Gr_d')$ the assertion already follows from \ref{lm:group} and
  the arguments preceding the lemma. 

  Our choice of basepoint in $\cP_d^{\bfc\paOm{\langle d]}{\geq 2}}$ 
  corresponds  exactly to the choice of the base vertices $()$ for $d$ even 
  and $(1)$ for $d$ odd for admissible words in $\Gr_d'$. The above arguments 
  and the conditions for 
  admissibility of words over the alphabet $\mathsf A^\pm$ show that
  based loops in 
  $\cP_d^{\bfc\paOm{\langle d]}{\geq 2}}$ 
  are represented by admissible words. Reduction is easily seen 
  to be a homotopy equivalence. Moreover,  loops corresponding to different 
  reduced admissible words are not homotopy equivalent. 
  Since the concatenation of admissible words is admissible, the remaining
  assertion now follows.
\end{proof}

\begin{remark}
  Let $X$ be a compact path-connected Hausdorff space and $C(X)$ the space of continuous functions on $X$. 
  Let $P(u, x) = u^d + a_{d-1} u^{d-1} + \ldots + a_1 u + a_0$ be a polynomial in $u$ with coefficients $a_i \in C(X)$. 
  Assume, that for each $x \in X$, $P(u, x) \in \mathcal P_d^{\bfc\paOm{\langle d]}{\geq 2}}$; that is 
  $P(u, x)$ has only simple real roots and no more than a single root of multiplicity $2$. 
  
 Thus, every such $P(u, x)$ generates a continuous map $\a_P: X \to \mathcal P_d^{\bfc\paOm{\langle d]}{\geq 2}}$. Since 
  a subgroup of a free group is free or trivial, it follows that when $\pi_1(X)$ does not admit an epimorphism with the 
  free target group, then the induced map $$(\a_P)_\ast: \pi_1(X) \to \pi_1(\mathcal P_d^{\bfc\paOm{\langle d]}{\geq 2}})$$
  must be trivial. For example, this is the case for any $X$ with a finite fundamental group. 
  
  Here is one implication of the triviality of $(\a_P)_\ast$: for any loop $\delta: S^1 \to X$, the loop $(\a_P)_\ast(\delta)$ hits 
  the non-singular part of the discriminant variety $\mathcal D_d \subset \mathcal P_d$ so that the admissible word in the alphabet $\mathsf A^\pm$, 
  generated by $(\a_P)_\ast(\delta)$, can be reduced to the identity $1$. 
  \smallskip 
\end{remark}

\subsection{The fundamental group $\pi_1(\cP_d^{\bfc\Theta})$ for any closed poset $\Theta$ with the locus $\cP_d^{\Theta} \subset \cP_d$ of codimension $ \geq 2$}
 
This section describes how the fundamental group changes if we add to the space 
$\cP_d^{\bfc\paOm{\langle d]}{\geq 2}}$ some strata 
of codimension $2$. 
Each of these strata will create a relation in the fundamental group of 
the new space under construction. We provide an explicit description of these  
relations and show that, in many cases, the resulting quotient is still a \emph{free} group.
\smallskip 

We start with the following simple statement. 

\begin{lemma}\label{lm2bN} 
  Let $\Theta \subset \bfOm_{\langle d]}$ be a closed poset such that 
  $\Theta \subset \paOm{\langle d]}{\geq k}$. Then 
  the homotopy groups $\pi_i(\cP_d^{\bfc\Theta})$ vanish for all $i < k-1$. 
  In particular,  $\pi_1(\cP_d^{\bfc\Theta})$ vanishes,  provided 
  $\Theta \subset \paOm{\langle d]}{\geq 3}$. As a special case, we have 
  $\pi_1(\cP_d^{\bfc\paOm{\langle d]}{\geq 3}}) = 0$.
\end{lemma}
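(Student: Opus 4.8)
The plan is to exploit the CW-structure on the one-point compactification $\bar{\cP}_d^\Theta$ provided by \ref{cor:cw-structure}(ii): since $\Theta \subset \paOm{\langle d]}{\geq k}$, every cell $\mathring{\sR}_d^\omega$ with $\omega \in \Theta$ has codimension $|\omega|' \geq k$ in $\cP_d$, hence dimension $\leq d-k$. Together with the unique $0$-cell at infinity, this shows that $\bar{\cP}_d^\Theta$ is a CW-complex with no cells in dimensions $1, 2, \ldots, k-1$; it has the $0$-cell and then cells only in dimensions $\geq k$ (besides possibly some lower ones only if $\Theta$ contains compositions of small $|\omega|'$, but the hypothesis $|\omega|' \geq k$ rules that out, except we should note the $1$-cell $\mathring{\sR}_d^{(d)}$ has $|\omega|' = 0$, so if $(d) \in \Theta$ that contributes a $1$-cell — I would handle this by observing $(d)$ need not be relevant, or more carefully, recall that closedness forces $(d) \in \Theta$, so $\bar{\cP}_d^\Theta$ does contain a $1$-cell; this is the point to be careful about).

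Let me reorganize around the correct observation. Since $\Theta$ is closed it contains $(d)$, so $\bar{\cP}_d^\Theta$ genuinely has a $1$-cell. The cleaner route is to go through the homology/homotopy relation established just before \ref{pro:pitheta}: we have the homeomorphism $\bar{\cP}_d^\Theta \cong \tilde\Sigma\,\Sigma(S^{d-1} \cap \cP^\Theta_{d,0})$, so $\bar{\cP}_d^\Theta$ is a double suspension and in particular $2$-connected when $S^{d-1}\cap\cP^\Theta_{d,0}$ is nonempty (which holds as soon as $\Theta \neq \{(d)\}$, automatic here when $k \geq 2$ unless $d$ itself is small). First I would record that $S^{d-1} \cap \cP^\Theta_{d,0}$ inherits a CW-structure whose cells are the $\mathring{\sR}_d^\omega \cap S^{d-1} \cap \cP^\Theta_{d,0}$, of dimension $|\omega|'$ lower than the cell in $\bar{\cP}_d^\Theta$ — concretely, passing to $\cP_{d,0}$ drops dimension by $1$ (removing the $a_{d-1}$ direction via the fibration $p$) and intersecting with the sphere drops it by another $1$, so the cell indexed by $\omega$ has dimension $d - 1 - |\omega|' - 2 + \cdots$; I would just track that $S^{d-1}\cap\cP^\Theta_{d,0}$ is homotopy equivalent to a complex whose lowest-dimensional cells have dimension $(d-1) - \max_{\omega \in \Theta, \omega \neq (d)}(\text{something})$. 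Rather than belabor indices, the real content is: the codimension-$\geq k$ hypothesis means the top cell of $\cP_d^{\bfc\Theta}$-relevant data forces high connectivity.

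Actually the most direct and robust argument, which I would adopt, is the Alexander-duality / general-position one applied to $\cP_d^{\bfc\Theta}$ directly: $\cP_d^{\bfc\Theta}$ is obtained from the contractible $\cP_d$ by deleting $\cP_d^\Theta$, which is a subcomplex all of whose cells (other than the $1$-cell $\sR_d^{(d)}\cong\R$, which is closed and contributes nothing to low-dimensional obstructions after we note it is unknotted) have codimension $\geq k$. A loop or $i$-sphere ($i < k-1$) in $\cP_d^{\bfc\Theta}$, being nullhomotopic in $\cP_d \cong \R^d$, bounds a disk; by transversality (using the stratification and that each stratum has codimension $\geq k > i+1$) this disk can be pushed off $\cP_d^\Theta$, giving a nullhomotopy in $\cP_d^{\bfc\Theta}$. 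For $i = 1$ I would spell out: a based loop bounds a map of $D^2$; perturb it to be transverse to each codimension-$\geq k$ stratum; since $2 < k$, transversality forces the image of $D^2$ to miss all strata of $\cP_d^\Theta$ except possibly $\sR_d^{(d)}$ (codimension $d-1 \geq k$ when $d$ large, but for small $d$ we invoke $\pi_1$ of the complement of a line in $\R^d$ which is trivial for $d \geq 3$, and the case $k-1 \leq 1$ is vacuous). Hence $\pi_i(\cP_d^{\bfc\Theta}) = 0$ for $i < k-1$; specializing $k = 3$ gives $\pi_1 = 0$, and $\Theta = \paOm{\langle d]}{\geq 3}$ is the special case.

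The main obstacle I anticipate is the bookkeeping around the single low-codimension cell $\sR_d^{(d)}$ (forced into every closed $\Theta$): one must check it does not obstruct the transversality argument — either by noting its complement is simply connected for the relevant $d$, or by absorbing it through the suspension description $\bar{\cP}_d^\Theta \cong \tilde\Sigma\Sigma(\cdots)$ which automatically kills it. Beyond that, the proof is a routine transversality/dimension-count argument and I would keep it short.
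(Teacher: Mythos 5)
Your final argument --- an $i$-sphere with $i<k-1$ bounds a disk in the contractible $\cP_d$, and transversality pushes that disk off all strata of $\cP_d^{\Theta}$, each of codimension $\geq k > i+1$ --- is exactly the paper's proof, which simply records that $\mathrm{codim}(\cP_d^{\Theta},\cP_d)\geq k$ and invokes general position. Your detour about the cell $\mathring{\sR}_d^{(d)}$ is unnecessary (and contains two slips: $|(d)|' = d-1$, not $0$, and the complement of a line in $\R^3$ is \emph{not} simply connected), because if $(d)\in\Theta$ then the hypothesis $\Theta\subset\paOm{\langle d]}{\geq k}$ already forces $d-1\geq k$, so this stratum is handled by the same dimension count and needs no special treatment.
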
 

\begin{proof}   
  We observe that if $\Theta \subset \paOm{\langle d]}{\geq k}$, then 
  $\textup{codim}(\bar{\cP}_d^{\Theta}, \bar{\cP}_d) \geq k$. Therefore, 
  by the general position argument, 
  $\pi_i(\cP_d^{\bfc\Theta}) = 0$ for all $i < k-1$. In particular, 
  $\pi_1(\cP_d^{\bfc\Theta}) = 0$,  provided that 
  $\Theta \subset \paOm{\langle d]}{\geq 3}$.
\end{proof} 
  
\begin{figure}

  \begin{picture}(0,0)%
    \hskip1.0cm\includegraphics[width=0.9\textwidth]{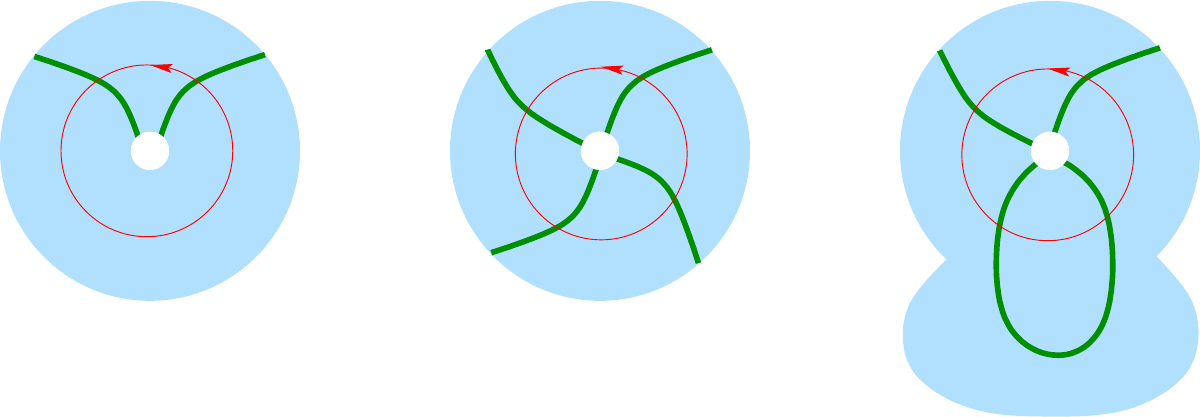}%
  \end{picture}%
  \setlength{\unitlength}{3947sp}%
  \begingroup\makeatletter\ifx\SetFigFont\undefined%
    \gdef\SetFigFont#1#2#3#4#5{%
    \reset@font\fontsize{#1}{#2pt}%
  \fontfamily{#3}\fontseries{#4}\fontshape{#5}%
  \selectfont}%
  \fi\endgroup%
  \begin{picture}(9600,3326)(1801,-3688)
    \put(8100,-3550){\makebox(0,0)[lb]{\smash{{\SetFigFont{6}{6}{\rmdefault}{\mddefault}{\updefault}{\color{blue}${\bf (1\,1\,1\,1)}$}%
}}}}
    \put(8200,-2900){\makebox(0,0)[lb]{\smash{{\SetFigFont{6}{6}{\rmdefault}{\mddefault}{\updefault}{\color{blue}${\bf (1\,1)}$}%
}}}}
    \put(8000,-1500){\makebox(0,0)[lb]{\smash{{\SetFigFont{6}{6}{\rmdefault}{\mddefault}{\updefault}{\color{blue}${\bf (1\,1\,1\,1\,1\,1)}$}%
}}}}
    \put(8650,-3200){\makebox(0,0)[lb]{\smash{{\SetFigFont{6}{6}{\rmdefault}{\mddefault}{\updefault}{\color{forestgreen}${\bf (1\,2\,1)}$}%
}}}}
    \put(8080,-2200){\makebox(0,0)[lb]{\smash{{\SetFigFont{6}{6}{\rmdefault}{\mddefault}{\updefault}{\color[rgb]{0,0,0}${\bf (1\,2\,2\,1)}$}%
}}}}
    \put(8900,-1550){\makebox(0,0)[lb]{\smash{{\SetFigFont{6}{6}{\rmdefault}{\mddefault}{\updefault}{\color{forestgreen}${\bf (1\,1\,1\,2\,1)}$}%
}}}}
    \put(7200,-1550){\makebox(0,0)[lb]{\smash{{\SetFigFont{6}{6}{\rmdefault}{\mddefault}{\updefault}{\color{forestgreen}${\bf (1\,1\,1\,2\,1)}$}%
}}}}
    \put(6200,-2900){\makebox(0,0)[lb]{\smash{{\SetFigFont{6}{6}{\rmdefault}{\mddefault}{\updefault}{\color{forestgreen}${\bf (2\,1\,1)}$}%
}}}}
    \put(4750,-2800){\makebox(0,0)[lb]{\smash{{\SetFigFont{6}{6}{\rmdefault}{\mddefault}{\updefault}{\color{forestgreen}${\bf (1\,2\,1)}$}%
}}}}
    \put(6200,-1550){\makebox(0,0)[lb]{\smash{{\SetFigFont{6}{6}{\rmdefault}{\mddefault}{\updefault}{\color{forestgreen}${\bf (2\,1\,1\,1\,1)}$}%
}}}}
    \put(4750,-1550){\makebox(0,0)[lb]{\smash{{\SetFigFont{6}{6}{\rmdefault}{\mddefault}{\updefault}{\color{forestgreen}${\bf (1\,1\,1\,2\,1)}$}%
}}}}
    \put(5500,-2200){\makebox(0,0)[lb]{\smash{{\SetFigFont{6}{6}{\rmdefault}{\mddefault}{\updefault}{\color[rgb]{0,0,0}${\bf (2\,1\,2\,1)}$}%
}}}}
    \put(5550,-1500){\makebox(0,0)[lb]{\smash{{\SetFigFont{6}{6}{\rmdefault}{\mddefault}{\updefault}{\color{blue}${\bf (1\,1)}$}%
}}}}
    \put(5400,-2900){\makebox(0,0)[lb]{\smash{{\SetFigFont{6}{6}{\rmdefault}{\mddefault}{\updefault}{\color{blue}${\bf (1\,1\,1\,1\,1\,1)}$}%
}}}}
    \put(5000,-2400){\makebox(0,0)[lb]{\smash{{\SetFigFont{6}{6}{\rmdefault}{\mddefault}{\updefault}{\color{blue}${\bf (1\,1\,1\,1)}$}%
}}}}
    \put(6020,-2100){\makebox(0,0)[lb]{\smash{{\SetFigFont{6}{6}{\rmdefault}{\mddefault}{\updefault}{\color{blue}${\bf (1\,1\,1\,1)}$}%
}}}}
    \put(2900,-2200){\makebox(0,0)[lb]{\smash{{\SetFigFont{6}{6}{\rmdefault}{\mddefault}{\updefault}{\color[rgb]{0,0,0}${\bf (1\,3\,1\,1)}$}%
}}}}
    \put(2200,-1550){\makebox(0,0)[lb]{\smash{{\SetFigFont{6}{6}{\rmdefault}{\mddefault}{\updefault}{\color{forestgreen}${\bf (1\,2\,1\,1\,1)}$}%
}}}}
    \put(3550,-1550){\makebox(0,0)[lb]{\smash{{\SetFigFont{6}{6}{\rmdefault}{\mddefault}{\updefault}{\color{forestgreen}${\bf (1\,1\,2\,1\,1)}$}%
}}}}
    \put(2900,-1500){\makebox(0,0)[lb]{\smash{{\SetFigFont{6}{6}{\rmdefault}{\mddefault}{\updefault}{\color{blue}${\bf (1\,1\,1\,1)}$}%
}}}}
    \put(2800,-2850){\makebox(0,0)[lb]{\smash{{\SetFigFont{6}{6}{\rmdefault}{\mddefault}{\updefault}{\color{blue}${\bf (1\,1\,1\,1\,1\,1)}$}%
}}}}
  \end{picture}%
  \caption{The normal disks to the strata $\mathcal P_6^{(1311)}$ (left), $\mathcal P_6^{(2121)}$ (middle), and $\mathcal P_6^{(1221)}$ (right) and the loops bounding them  (in red).} 
\label{fig3}
\end{figure}

Further, we observe that, by the Alexander duality and the Hurewicz Theorem, 
for any closed $\Theta\subset \bfOm_{\langle d]}$, 
a minimal generating set of $\pi_1(\cP_d^{\bfc\Theta})$ contains at least 
$\rank(\bar H_{d-2}(\bar{\cP}_d^{\Theta}; \Z))$ elements.

Given a closed poset $\Theta \subseteq \paOm{\langle d]}{\geq 2}$, we consider 
two disjoint sets: 
\begin{eqnarray}\label{Om=2}
\Theta_{=2} = \paOm{\langle d]}{=2} \cap \Theta,  \quad \bfc \Theta_{=2} = \paOm{\langle d]}{=2} \setminus \Theta.
\end{eqnarray}
By definition, $\Theta_{=2}$ and $\bfc \Theta_{=2}$  both consist of $\omega$'s which  have some parts $1$ and either a \emph{single} entry $3$, or \emph{two} $2$s.

Now consider a loop $\gamma$ in $\cP_d^{\bfc\Theta}$. It bounds a $2$-disk 
$D$ in $\cP_d$. By a general position argument, we may assume that 
$D$ avoids all strata $\sR_d^\omega$ with $|\omega|' \geq 3$ and that
if $\gamma$ intersects a stratum $\mathring{\sR}_d^\omega$ for some 
$|\omega|' = 2$, then this intersection is transversal. 

Consider a fixed $\omega$ with $|\omega|' = 2$ such that 
$\gamma$ 
intersects $\mathring{\sR}_d^\omega$. For 
$x \in \mathring{\sR}_d^\omega$, consider a small $2$-disk $x \in D_x \subseteq
\cP_d^{\bfc\paOm{\langle d]}{\geq 3}}$ 
transversal to $\mathring{\sR}_d^\omega$ which intersects only
cells $\mathring{\sR}_d^ {\omega'}$ for $\omega' \succeq \omega$. 
Let  $\kappa_x := \d D_x \subset \cP_d^{\bfc\paOm{\langle d]}{\geq 2}}$ be 
the loop bounding $D_x$ (see \ref{fig3}). 
Since $\mathring{\sR}_d^\omega$ is contractible, for each 
$x \in D \cap \mathring{\sR}_d^\omega$, the homotopy class of $\kappa_x$ 
depends  only on $\omega$ (up to inversion). 
Hence we can speak of $\kappa_\omega$. 
 
We choose a path $\beta \subset \cP_d^{\bfc\paOm{\langle d]}{\geq 2}}$ that connects our base point in $\mathring{\sR}_d^{()}$ or $\mathring{\sR}_d^{(1)}$ 
with $\kappa_\omega$. By a general position argument we can again assume that 
$\beta$ intersects all codimension $1$ strata transversally. 
Let $\kappa_\omega^\bullet := \beta^{-1}\circ \kappa_\omega \circ \beta$ be the loop that starts at the base point, follows a path $\beta$, then 
traverses $\kappa_\omega$ once, and returns to the base point following 
$\beta^{-1}$.
By recording the intersections of $\kappa_\omega^\bullet$ with the codimension $1$ strata, indexed by the $\omega_{ij}$, together with the direction of the intersections 
we obtain an admissible word in $\mathsf A^\pm$. 
By \ref{lm2bN}, the homotopy class of $\kappa_\omega^\bullet$ in $\cP_d^{\bfc\paOm{\langle d]}{\geq 2}}$ depends only on this word. 
In particular, we can express this word as a product of words $\gamma_{ij}$
or their inverses for the $\omega_{ij}$ intersected by $\kappa_\omega$. 

Evidently, if $\omega \in \bfc\Theta_{=2}$, the loop $\kappa_\omega$ and thus 
the loop $\kappa^\bullet_\omega$ are contractible in $\cP_d^{\bfc\Theta}$.  
The following lemma gives a precise analysis of the contractible 
$\kappa_\omega^\bullet$ and shows that the triviality of the corresponding 
words provides a presentation of  $\pi_1(\cP_d^{\bfc\Theta})$ 
generalizing \ref{lm2b}.

\begin{proposition}\label{lem2.14} 
  For any closed poset $\Theta \subset \paOm{\langle d]}{\geq 2}$,
  the fundamental group $\pi_1(\cP_d^{\bfc\Theta})$ is a quotient of the free group 
  $\mathcal G_d$ 
  by the normal subgroup, generated by the following relations, one relation for every $\om \in \bfc\Theta_{=2}$.

  \begin{itemize}
    \item[\bf(3)] For each $\om = (\underbrace{1\cdots 1}_i 3 \underbrace{1\cdots1}_j ) \in \bfc \Theta_{=2}$
       the corresponding relation is 
       $$\gamma_{i,j+1} \, \gamma_{i+1,j}^{-1} = 1$$ 

     \item[\bf(22)]  For each $\om = (\underbrace{1\cdots1}_i 2 \underbrace{1\cdots 1}_j 2 \underbrace{1 \cdots 1}_\ell ) \in \bfc\Theta_{=2}$ 
       the corresponding 
       relation is $$\gamma_{i+j,\ell}\, \gamma_{i+j+2,\ell}\, \gamma_{i,j+\ell+2}^{-1} \,\gamma_{i,j+\ell}^{-1} = 1.$$  
  \end{itemize}
\end{proposition}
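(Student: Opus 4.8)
The plan is to use the cell structure on $\cP_d$ and a Seifert--van Kampen argument that adds the codimension-$2$ strata of $\bfc\Theta_{=2}$ to $\cP_d^{\bfc\paOm{\langle d]}{\geq 2}}$ one at a time. Concretely, let $\Theta \subset \paOm{\langle d]}{\geq 2}$ be closed, enumerate $\bfc\Theta_{=2} = \{\om^{(1)},\ldots,\om^{(m)}\}$, and set
$$
Z_0 = \cP_d^{\bfc\paOm{\langle d]}{\geq 2}}, \qquad
Z_k = Z_{k-1} \cup \mathring{\sR}_d^{\om^{(k)}} \text{ (together with the higher strata forced to be added by closedness above } \om^{(k)}\text{)}.
$$
Since $\Theta$ is closed, $\bfc\Theta = \paOm{\langle d]}\setminus\Theta$ is an up-set for $\prec$, so adding the cell $\mathring{\sR}_d^{\om^{(k)}}$ (whose codimension is $2$) obliges us also to add all cells $\mathring{\sR}_d^{\om'}$ with $\om' \succ \om^{(k)}$; by \ref{lm2bN} (applied to the up-set sitting strictly above $\om^{(k)}$, which lies in $\paOm{\langle d]}{\geq 3}$) this added set is $\pi_1$-trivial in a neighborhood, so homotopically the $k$-th step really does just attach a single $2$-cell along $\kappa_{\om^{(k)}}$. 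One checks that $Z_m$ is a deformation retract of $\cP_d^{\bfc\Theta}$: the strata in $\Theta$ of codimension $\geq 3$ that are \emph{not} already in $Z_m$ cannot be reached, because $\cP_d^{\bfc\Theta}$ removes exactly $\cP_d^\Theta$ and what remains is $Z_0$ together with the codimension-$2$ walls $\mathring{\sR}_d^{\om}$, $\om \in \bfc\Theta_{=2}$, plus the already-added higher strata; a general position/collar argument shows these higher strata can be pushed off.

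Granting this, Seifert--van Kampen gives
$$
\pi_1(Z_k) \cong \pi_1(Z_{k-1}) \big/ \langle\langle\, [\kappa_{\om^{(k)}}^\bullet] \,\rangle\rangle,
$$
because attaching a $2$-cell along a loop kills the conjugacy class of that loop. Iterating and using $\pi_1(Z_0) \cong \mathcal G_d$ from \ref{lm:words}, we get that $\pi_1(\cP_d^{\bfc\Theta})$ is $\mathcal G_d$ modulo the normal closure of $\{[\kappa_\om^\bullet] : \om \in \bfc\Theta_{=2}\}$. It remains to identify, for each $\om \in \bfc\Theta_{=2}$, the word in $\mathsf A^\pm$ recorded by $\kappa_\om^\bullet$ and to simplify it, via \ref{lm:words}, to a product of the $\gamma_{ij}$'s. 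This is the computational heart of the argument: one must examine the $2$-disk $D_x$ transversal to $\mathring{\sR}_d^\om$ and see which codimension-$1$ walls $\mathring{\sR}_d^{\omega_{ij}}$ it crosses and in which order. For $\om = (1^i\,3\,1^j)$ the link of the stratum is an interval: a polynomial with a triple root can be perturbed to have a double root plus a simple root on either side, corresponding to the two patterns $(1^i\,2\,1^{j+1})$ and $(1^{i+1}\,2\,1^j)$; the boundary loop $\kappa_\om$ crosses precisely the walls $\mathring{\sR}_d^{\omega_{i,j+1}}$ and $\mathring{\sR}_d^{\omega_{i+1,j}}$ once each, with opposite net direction, giving $\gamma_{i,j+1}\,\gamma_{i+1,j}^{-1}$. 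For $\om = (1^i\,2\,1^j\,2\,1^\ell)$ the link is a circle (two independent double roots, each of which can be resolved into two simple roots independently), and $\kappa_\om$ bounds a square whose four sides cross the four walls obtained by resolving one double root while keeping or not keeping the other, namely $\omega_{i+j,\ell}$, $\omega_{i+j+2,\ell}$, $\omega_{i,j+\ell+2}$, $\omega_{i,j+\ell}$; reading the boundary of the square gives the stated relator $\gamma_{i+j,\ell}\,\gamma_{i+j+2,\ell}\,\gamma_{i,j+\ell+2}^{-1}\,\gamma_{i,j+\ell}^{-1}$.

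I expect the main obstacle to be precisely this last identification of the $\kappa_\om^\bullet$ words: one needs a careful local model of $\cP_d$ near a point of $\mathring{\sR}_d^\om$ for $\om$ of type $(1^i\,3\,1^j)$ or $(1^i\,2\,1^j\,2\,1^\ell)$, an explicit transverse slice, and a bookkeeping of the orientation conventions so that the signs in the relators come out consistently with the "$+$ = two new simple roots" convention fixed before \ref{lm:words}. The local model is standard (versal deformation of $u^3$, resp. $u^2 \oplus u^2$, inside the space of polynomials), and the combinatorics of which $\omega_{ij}$ appear is forced by which merge/insertion operations connect $\om$ to codimension-$1$ patterns; but making the "square" for the $(22)$ case genuinely bound $D_x$ and verifying that the conjugating path $\beta$ contributes the $\gamma_{0,\ast}$-tails exactly as in the definition of $\gamma_{ij}$ (so that $\kappa_\om^\bullet$ becomes literally the displayed product of $\gamma$'s, not merely conjugate to it) requires some care. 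Everything else — the van Kampen induction, the retraction $\cP_d^{\bfc\Theta} \simeq Z_m$, and the reduction of words via \ref{lm:words} — is routine given the earlier results.
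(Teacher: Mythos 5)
Your proposal is correct and follows essentially the same route as the paper: a Seifert--van Kampen argument showing that adjoining each codimension-$2$ cell $\mathring{\sR}_d^{\om}$, $\om \in \bfc\Theta_{=2}$, kills exactly the normal closure of the class of the meridian loop $\kappa_\om^\bullet$, combined with an explicit local analysis of how that loop crosses the codimension-$1$ walls (resolving a triple root for type (3), and the ``square'' of resolutions of two double roots for type (22)) to identify the relators; the strata of codimension $\geq 3$ are disposed of by general position, just as in the paper. Your extra care about conjugation by the access path $\beta$ is harmless, since the relations are imposed only up to normal closure.
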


Recall, that whenever a $\gamma_{0,j}$ appears in one of the relations that
it can be omitted by $\gamma_{0j} = 1$. 
Note also that, in case (22), when $j = 0$,  the relation 
$\gamma_{i+j,\ell}\, \gamma_{i+j+2,\ell}\, \gamma_{i,j+\ell+2}^{-1} \,
\gamma_{i,j+\ell}^{-1}$ is conjugate to the relation
$\gamma_{i+j+2,\ell}\,\gamma_{i,j+\ell+2}^{-1}$.  
 
\begin{proof}  
  First we show that the relations (3) and (22) lie in the kernel of the homomorphism
  \begin{align}
    \label{eq:inc} 
    \pi_1(\cP_d^{\bfc\paOm{\langle d]}{\leq 2}}) & \rightarrow \pi_1(\cP_d^{\bfc\Theta}) 
  \end{align}
  induced by the inclusion of spaces. 
  By the arguments preceding the lemma, it suffices to show that each relation
  corresponds to an admissible word representing $\kappa_\om^\bullet$ for some 
  $\om \in \paOm{\langle d]}{= 2}$. 

  \begin{itemize}
    \item[\bf(3)] 
        For each $\om = (\underbrace{1,\cdots , 1}_i, 3, \underbrace{1,\cdots, 1}_j ) \in \bfc \Theta_{=2}$,
        we notice that $\mathring{\sR}^\om_d$ lies only in the boundaries of
        the codimension one cells $\mathring{\sR}^{\omega_{i+1,j}}_d$ and 
        $\mathring{\sR}^{\omega_{i,j+1}}_d$, and of the codimension $0$ cells
         corresponding to $(\underbrace{1,\ldots, 1}_{i+j+3})$ and
        to $(\underbrace{1,\ldots, 1}_{i+j+1})$. 
        
        Let us traverse the loop $\kappa_\om$, see the left diagram in \ref{fig3}.   
        When $\kappa_\om$ enters the cell, labelled  by 
        $(\underbrace{1,\ldots, 1}_{i+j+3})$, from 
        a point on $\mathring{\sR}^{\omega_{i,j+1}}_d$ the 
        $(i+1)$\textsuperscript{st} largest real root  splits into $2$ distinct 
        real roots. On its way towards  $\mathring{\sR}^{\omega_{i+1,j}}_d$,  
        the smaller of these two roots approaches the 
        $(i+1)$\textsuperscript{st} largest real root and at the end  merges with 
        it. Then, when entering the cell corresponding to 
        $(\underbrace{1,\ldots, 1}_{i+j+1})$,
        the real double root splits into two conjugate complex roots
        with small imaginary part and with real part between the 
        $(i+1)$\textsuperscript{st} and $(i+2)$\textsuperscript{nd} largest real 
        roots.  
        While traversing $\kappa_\om$, the value of the real part 
        passes the $(i+1)$\textsuperscript{st} real root and then, 
        when the imaginary parts vanish, $\kappa_\om$ is back in 
        $\mathring{\sR}^\omega_{i,j+1}$.
        Thus $\kappa_\om^\bullet$ corresponds to the relation
        $\gamma_{i,j+1} \, \gamma_{i+1,j}^{-1}$.\smallskip
        
    \item[\bf(22)] For $\om = (\underbrace{1,\cdots, 1}_i, 2, \underbrace{1,\cdots , 1}_j , 2, \underbrace{1 \cdots 1}_\ell ) \in \bfc\Theta_{=2}$, the codimension $2$ cell 
        $\mathring{\sR}^\om_d$ lies only in the boundary of
        the codimension $1$ cells $\mathring{\sR}^{\omega_{i+2+j,\ell}}_d$,
        $\mathring{\sR}^{\omega_{i,j+\ell+2}}_d$,
        $\mathring{\sR}^{\omega_{i+j,\ell}}_d$ and
        $\mathring{\sR}^{\omega_{i,j+\ell}}_d$.
        (Note that the last two coincide when $j = 0$).  
        It also lies in the boundary of the codimension $0$-cells
        $(\underbrace{1,\ldots, 1}_k)$ for $k = i+j+\ell,\,i+j+\ell+2,\, i+j+\ell+4$.
        
        Again, let us traverse the loop $\kappa_\om$, see the middle and the right  diagrams in \ref{fig3}.  
        We start in $\mathring{\sR}_d^{\omega_{i+j+2,\ell}}$ and, when entering the cell  labelled by $(\underbrace{1,\ldots, 1}_{i+j+\ell+4})$,
        we split the $(i+j+2)$\textsuperscript{nd} root into two distinct real roots.
        Then, on the way through 
        this cell, the $(i+1)$\textsuperscript{st} and $(i+2)$\textsuperscript{nd}
        largest real roots approach each other and finally merge at  
        a point belonging to  the wall $\mathring{\sR}_d^{\omega_{i,j+\ell+2}}$.
        After passing $\mathring{\sR}_d^{\omega_{i,j+\ell+2}}$,
        the $(i+1)$\textsuperscript{st} largest real root splits into
        two complex conjugate roots. Next, we move towards 
        $\mathring{\sR}_d^{\omega_{i,j+\ell}}$ by bringing the $i$\textsuperscript{th} and $(i+1)$\textsuperscript{st} largest real roots together. 
        Then we split this double root into two complex conjugate roots
        inside the $d$-cell  labelled by $(\underbrace{1,\ldots,1}_{i+j+\ell})$; further 
        we move the real part of these roots to a value between the $(i+j)$\textsuperscript{th}  and the $(i+j+1)$\textsuperscript{st} real roots, while
        letting the imaginary part to approach $0$. This leads us to
        $\mathring{\sR}^{\omega_{i+j,\ell}}$, from where we follow the
        analogous route back to $\mathring{\sR}_d^{\omega_{i+j+2,\ell}}$.
        Note that along the way, we made certain choices that determine which codimension
        $1$ cell we want to approach next. The arguments, preceding the
        lemma, show that all such choices lead to homotopic paths/loops. 

        As a consequence, the loop $\kappa_\om^\bullet$ translates into the
        relation $$\gamma_{i+j,\ell}\, \gamma_{i+j+2,\ell}\,\gamma_{i,j+\ell+2}^{-1}\,
        \gamma_{i,j+\ell+2}^{-1} = 1.$$  

  \end{itemize}

  Next we show that the relations (3) and (22) generate the kernel of the homomorphism \ref{eq:inc}.
  For each $\omega$ with $|\omega|' = 2$, consider a closed regular 
  neighborhood $U_\omega$ of the cell $\mathring{\sR}_d^\omega$ in the space 
  $\cP_d^{\mathbf {c}\paOm{\langle d]}{\geq 2}}\, \cup\, 
  \mathring{\sR}_d^\omega$. 
  We may assume that, for distinct $\omega$'s, 
  these neighborhoods are disjoint. The space $U_\omega$ fibers over 
  $\mathring{\sR}_d^\omega$ with  fibers being  $2$-disks. Since the base 
  $\mathring{\sR}_d^\omega$ is contractible, the fibration 
  $p_\om: U_\omega \to \mathring{\sR}_d^\omega$ is trivial.

  Set $X := \cP_d^{\mathbf {c}\paOm{\langle d]}{\geq 2}}$. 
  Adding  to $X$ a cell 
  $\mathring{\sR}_d^\omega$, where $\om \in \bfc\Theta_{=2}$ produces a new space $Y$. Its homotopy type is 
  the result of attaching a $2$-handle to $X$ along its boundary.  The spaces 
  $X$, $U_\om$, and $X\cap U_\om$ are path-connected. Thus, by the 
  Seifert--van Kampen Theorem, 
  $$\pi_1(Y) \approx \pi_1(X)\ast_{\pi_1(X \cap\, U_\om)} \pi_1(U_\om),$$ which is 
  the free product $\ast$ of the groups $\pi_1(X)$ and $\pi_1(U_\om)$, 
  amalgamated over $\pi_1(X \cap U_\om)$. 
  Since $U_\om$ is homotopy equivalent to the cell $\mathring{\sR}_d^\omega$, 
  we get $\pi_1(U_\om) = 0$. At the same time, using the triviality of $p_\om$,
  $X \cap U_\om = U_\om \setminus \mathring{\sR}_d^\omega$ is 
  homotopy equivalent to a loop $\kappa_\om$, which bounds a small  $2$-disk 
  normal to the stratum $\mathring{\sR}_d^\omega$. Therefore, 
  $\pi_1(Y) \approx \pi_1(X)/[\kappa^\bullet_\om]$. 
  Thus $\pi_1(Y)$ arises from $\pi_1(X)$ by factoring modulo one of 
  the relation of type (3) or (22).

  Since the $U_\om$'s were chosen mutually disjoint, 
  it follows that we can repeat this argument for all
  $\om \in \bfc\Theta_{=2}$ and conclude 
  that $\pi_1(\cP_d^{\bfc\Theta})$ arises as a quotient of 
  $\pi_1(\cP_d^{\mathbf {c}\paOm{\langle d]}{\geq 2}})$ by
  the relations (3) and (22) for $\om \in \bfc\Theta_{=2}$.
\end{proof}

The next theorem shows that in many cases the relations from 
\ref{lem2.14} yield a presentation of a \emph{free} group.

\begin{theorem} \label{main}
  Let $\Theta \subset \paOm{\langle d]}{\geq 2}$ be a closed
  poset such that either 
  \begin{itemize}
    \item[(i)] $\Theta$ contains all $\omega \in \bfOm_{\langle d]}$ such
      that $\om = (\underbrace{1\cdots 1}_i 2 \underbrace{1\cdots1}_j 2 \underbrace{1\cdots 1}_\ell)$ for some $j > 0$;
      
      or 
    \smallskip  
    \item[(ii)] $\Theta$ is such that for $\omega \in \bfOm_{\langle d]}$ with
       $|\om|' = 2$ we have 
       $$\om \in \Theta\; \Leftrightarrow \; \om = (\underbrace{1\cdots 1}_i 3 \underbrace{1\cdots1}_j)$$ for some $i,j \geq 0$.   
  \end{itemize}

  Then the fundamental 
  group $\pi_1(\cP_d^{\bfc\Theta})$ is a free group in $\rank(\bar H_{d-2}(\bar{\cP}_d^{\Theta}; \Z))$\footnote{In \cite{KSW}, we will compute $\rank(\bar H_{d-2}(\bar{\cP}_d^{\Theta}; \Z))$ in pure combinatorial terms.} generators.
  In particular, in case (ii), we have $\pi_1(\cP_d^{\bfc\Theta}) = \Z$ for
  $d \geq 4$.
\end{theorem}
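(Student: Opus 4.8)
The plan is to start from the presentation furnished by \ref{lem2.14}, namely $\pi_1(\cP_d^{\bfc\Theta}) \cong \mathcal G_d / N$, where $\mathcal G_d$ is the free group on the generators $\gamma_{ij}$ and $N$ is the normal closure of the relators of types \textbf{(3)} and \textbf{(22)}, one for each $\om \in \bfc\Theta_{=2}$, together with the conventions $\gamma_{0,j} = 1$. I would prove the theorem in three stages: (A) the quotient $\mathcal G_d/N$ is free; (B) once it is known to be free, its rank is forced to equal $\rank\!\big(\bar H_{d-2}(\bar{\cP}_d^{\Theta};\Z)\big)$; and (C) in case (ii) this rank equals $1$ whenever $d \geq 4$.

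Stage (B) is the routine part and is insensitive to hypotheses (i), (ii). If $\pi_1(\cP_d^{\bfc\Theta})$ is free then, being a quotient of the finitely generated group $\mathcal G_d$, it is free of some finite rank $r$, so $H_1(\cP_d^{\bfc\Theta};\Z) \cong \Z^r$. By the universal coefficient theorem and path-connectedness, $H^1(\cP_d^{\bfc\Theta};\Z) \cong \operatorname{Hom}(H_1(\cP_d^{\bfc\Theta};\Z),\Z) \cong \Z^r$, and by the Alexander duality isomorphism $\bar H^1(\cP_d^{\bfc\Theta};\Z) \cong \bar H_{d-2}(\bar{\cP}_d^{\Theta};\Z)$ recorded in \ref{sec:intro} this gives $r = \rank\!\big(\bar H_{d-2}(\bar{\cP}_d^{\Theta};\Z)\big)$, consistent with the lower bound for a minimal generating set of $\pi_1(\cP_d^{\bfc\Theta})$ noted just before \ref{lem2.14}.

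For stage (A) in case (i): the hypothesis that $\Theta$ contains every $\om = (\underbrace{1\cdots1}_i 2 \underbrace{1\cdots1}_j 2 \underbrace{1\cdots1}_\ell)$ with $j>0$ forces $\bfc\Theta_{=2}$ to consist only of compositions of the forms $(\underbrace{1\cdots1}_i 3 \underbrace{1\cdots1}_j)$ and $(\underbrace{1\cdots1}_i 2\,2 \underbrace{1\cdots1}_\ell)$. Hence every defining relator is either $\gamma_{i,j+1}\gamma_{i+1,j}^{-1}$ (type \textbf{(3)}) or, by the remark following \ref{lem2.14}, conjugate to $\gamma_{i+2,\ell}\gamma_{i,\ell+2}^{-1}$ (type \textbf{(22)} with $j=0$); adding the relators $\gamma_{0,j}$, \emph{every relator is conjugate either to a product $\gamma_\alpha\gamma_\beta^{-1}$ of two distinct generators (with opposite signs) or to a single generator $\gamma_\alpha$.} I would then run Tietze transformations, using each relator to eliminate one of the generators occurring in it. The crucial observation is that substituting one generator for another preserves this special shape of the remaining relators, so a relator of the form $\gamma^{\pm 2}$ can never be produced; iterating until no relators remain exhibits $\pi_1(\cP_d^{\bfc\Theta})$ as the free group on the surviving generators — equivalently, a presentation all of whose relators have this shape is the fundamental group of a graph, exactly as in \ref{lm2b}.

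For stage (A) in case (ii): here $\bfc\Theta_{=2} = \paOm{\langle d]}{=2}\setminus\Theta = \{(\underbrace{1\cdots1}_i 2 \underbrace{1\cdots1}_j 2 \underbrace{1\cdots1}_\ell)\}$, so all relators are of type \textbf{(22)}. I would first use the subfamily with $j=0$: the relator $\gamma_{i+2,\ell}\gamma_{i,\ell+2}^{-1}$ is available precisely when $i+\ell \leq d-4$, which holds whenever $\gamma_{i+2,\ell}$ is a generator of $\mathcal G_d$; processing these in order of decreasing first index eliminates every $\gamma_{a,b}$ with $a$ even (replacing it eventually by $\gamma_{0,\cdot}=1$) and identifies every $\gamma_{a,b}$ with $a$ odd with the level representative $\gamma_{1,\,a+b-1}$. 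Substituting these identifications into the remaining ($j>0$) relators and splitting into cases by the parities of $i$ and $j$, one finds that such a relator becomes trivial unless $j$ is odd, in which case it reduces (up to inversion and conjugation) to $\gamma_{1,\,m-1}\gamma_{1,\,m+1}$ with $m=i+j+\ell$; the surviving relators, indexed by $m$ running with step $2$ from its least admissible value up to $d-4$, form a chain each of whose members eliminates a fresh generator $\gamma_{1,\cdot}$, and comparing the number of generators left after the $j=0$ reductions with the number of chain relators shows that exactly one generator survives. Hence $\pi_1(\cP_d^{\bfc\Theta}) \cong \Z$ for $d\geq 4$, which is stage (C).

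The step I expect to be the main obstacle is stage (A): arranging the Tietze eliminations so that the relators can be matched with distinct generators to be removed, with no hidden circularity forcing a relation such as $\gamma^2 = 1$. In case (i) this is transparent from the ``positive'' shape of the relators; in case (ii) the delicate points are to verify that, once the even-index generators are removed, the length-four relators genuinely collapse as described, and that the ensuing count — controlled by the admissibility constraint $i+j+\ell\leq d-4$ and by parities — leaves room for precisely one free generator. This bookkeeping, though not deep, is where care is required.
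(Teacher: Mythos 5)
Your proposal is correct and follows essentially the same route as the paper: it starts from the presentation in \ref{lem2.14}, notes that under hypothesis (i) (resp.\ (ii)) every surviving relator is, up to conjugation and inversion, a length-two word in distinct generators so that Tietze eliminations leave a free group, and then recovers the rank from $H_1$ together with the Alexander duality isomorphism $\bar H^1(\cP_d^{\bfc\Theta};\Z)\cong \bar H_{d-2}(\bar{\cP}_d^{\Theta};\Z)$. Your parity case analysis in case (ii), showing that the $j>0$ relators either trivialize or reproduce the chain relators $\gamma_{1,m-1}\gamma_{1,m+1}$, is exactly the reduction the paper carries out (somewhat more tersely) with the compositions $(1^i,2,2,1^\ell)$ and $(2,1,2,1^{d'-5})$.
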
 
\begin{proof} ~

  \begin{itemize}
    \item[(i)]
      By \ref{lem2.14} and the assumptions on $\Theta$, 
      the fundamental group 
      $\pi_1(\cP_d^{\mathbf c \Theta})$ is a quotient of the free group
      by arbitrary relations of type (3) and some specific of type (22). 

      Any relation of type (3) identifies some $\gamma_{i,j+1}$ with 
      $\gamma_{i+1,j}$. 
      Any relation of type (22) that can appear identifies some 
      $\gamma_{i+j+2}$ with $\gamma_{i,j+\ell+2}$. 

      In either case the relation and one generator can be dropped
      without changing the isomorphism type of the group. 
      
      Note that the number of generators of the free group $\pi_1(\cP_d^{\bfc\Theta})$ is the rank of $H_1(\cP_d^{\bfc\Theta}; \Z)$. By the Alexander duality, it is equal the rank of $H^{d-2}(\bar{\cP}_d^{\Theta}; \Z)$.
      
      The assertion follows. 

\smallskip
      
    \item[(ii)]
      By \ref{lem2.14} and the assumptions on $\Theta$, 
      the fundamental group $\pi_1(\cP_d^{\mathbf c \Theta})$ is a quotient 
      of the free group by \emph{all} possible relations of type (22).

      First consider compositions of type 
      $(\underbrace{1,\ldots,1}_{i},2,2,\underbrace{1,\ldots,1}_{\ell})$
      of $d'=i+4+\ell \leq d$ where $d$ and $d'$ are of the same parity. 
      The corresponding relation $\gamma_{i,\ell}\,\gamma_{i+2,\ell}\,
      \gamma_{i,\ell+2}^{-1}\,\gamma_{i,\ell}^{-1} \hfill \break =1$ identifies 
      $\gamma_{i+2,\ell}$ and $\gamma_{i,\ell+2}$. As a consequence this 
       relation, one generator can be dropped. By iterating this
      procedure, we identify all $\gamma_{i+2,\ell}$ and $\gamma_{i,\ell+2}$ 
      for $i=0,\ldots, d'-4$ and even $d-d' \geq 0$. 
      As a consequence, for a fixed $d'$, we have identified 
      $\gamma_{i,\ell}$ and $\gamma_{i',\ell'}$, where $i+\ell = d'-2 = i'+\ell'$
      if both $i$ and $i'$ are even or both $i$ and $i'$ are odd.
      After this identification, the group is generated by 
      $\gamma_{1,d'-3}$ for $3 \leq d' \leq d$.  
 
      Let $d' \geq 5$ and
      consider $\omega = (2,1,2,\underbrace{1\cdots1}_{d'-5})$. Then 
      the relation 
      $$\gamma_{1,d'-5}\, \gamma_{3,d'-5}\,\gamma_{0,d'-2}^{-1}\,\gamma_{0,d'-4}^{-1} =1$$
      implies 
      $\gamma_{1,d'-5}\, \gamma_{0,d'-4}^{-1} = \gamma_{3,d'-5}\, \gamma_{0,d'-2}^{-1}$, which identifies  
      $\gamma_{1,d'-5}$ and $\gamma_{3,d'-5}$ and, by transitivity,
      $\gamma_{1,d'-5}$ and $\gamma_{1,d'-3}$.
      Making this identification and removing the corresponding relations 
      leaves a set of trivial relations on the unique remaining generator
      $\gamma_{1,1}$ for $d$ even or $\gamma_{1,2}$ for $d$ odd.
      Thus we have arrived at a group with one generator and no relation.
      The assertion now follows. 
  \end{itemize}
\end{proof}

As an immediate consequence, we obtain the following result about certain 
natural spaces of real polynomials. 

\begin{corollary} \label{cormain}
  The fundamental group of the following spaces is a free group.
  \begin{itemize} 
    \item[(i)] The space of all degree $d$ polynomials with no
        two distinct roots of multiplicity $\geq 2$.
    \item[(ii)] The space of all degree $d$ polynomials with no
        root of multiplicity $\geq 3$. In this case, the
        fundamental group is $\Z$.  
  \end{itemize} 
\end{corollary}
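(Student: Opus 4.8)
The plan is to obtain both statements as direct applications of \ref{main}: in each case one writes the space as $\cP_d^{\bfc\Theta}$ for a suitably chosen closed poset $\Theta \subset \paOm{\langle d]}{\geq 2}$, and then checks that $\Theta$ falls under hypothesis (i), respectively (ii), of that theorem.

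Start with part (ii), which is the cleaner one. Take $\Theta$ to be the set of all $\omega \in \bfOm_{\langle d]}$ having at least one part $\geq 3$; then $\bfc\Theta$ consists of the compositions with all parts $\leq 2$, so $\cP_d^{\bfc\Theta}$ is exactly the space of degree-$d$ real polynomials with no real root of multiplicity $\geq 3$. Three routine checks remain. First, $\Theta$ is closed, i.e. down-closed under $\prec$ (cf.\ \ref{cor:cw-structure}): a merge $\sM_j$ only enlarges the part it touches (replacing $\omega_j,\omega_{j+1}$ by their sum), and an insert $\sI_j$ leaves all old parts intact and merely appends a new part equal to $2$, so a part $\geq 3$ always survives the passage to a smaller composition. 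Second, a single part $\geq 3$ already contributes $\geq 2$ to the codimension $|\omega|'$, hence $\Theta \subseteq \paOm{\langle d]}{\geq 2}$. Third, the codimension-two compositions are precisely those of the form $(1^i 3 1^j)$ and $(1^i 2 1^j 2 1^\ell)$, and exactly the first family has a part $\geq 3$, i.e.\ lies in $\Theta$; this is condition (ii) of \ref{main}. Therefore $\pi_1(\cP_d^{\bfc\Theta}) = \Z$ for $d \geq 4$, as asserted (for $d \leq 2$ there are no multiplicity-$\geq 3$ roots at all and the space is contractible, while $d = 3$ is handled directly, the space being the complement of an unknotted line in $\R^3$).

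For part (i) the only delicate point is the choice of $\Theta$: the literal ``bad'' set --- all compositions with two distinct parts $\geq 2$ --- is \emph{not} closed, since a single merge sends $(1^i 2 2 1^j)$ to $(1^i 4 1^j)$, which has only one non-simple part. One must therefore pass to the closed poset $\Theta$ it generates, namely the smallest down-set containing every $\omega$ with at least two parts $\geq 2$. Unwinding the merge/insert relations one finds $\Theta = \bfOm_{\langle d]} \setminus \{(1^i),\,(1^i 2 1^j),\,(1^i 3 1^j)\}$, so that $\cP_d^{\bfc\Theta}$ is the space of degree-$d$ polynomials with at most one non-simple real root, that root having multiplicity $\leq 3$ --- which is what ``no two distinct real roots of multiplicity $\geq 2$'' amounts to once the (limiting) multiplicity-$\geq 4$ locus has been removed together with the bad set. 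Every element of this $\Theta$ has either two parts $\geq 2$ (codimension $\geq 2$) or a single part $\geq 4$ (codimension $\geq 3$), so $\Theta \subseteq \paOm{\langle d]}{\geq 2}$; and $\Theta$ visibly contains every composition $(1^i 2 1^j 2 1^\ell)$, in particular those with $j > 0$, so hypothesis (i) of \ref{main} is met and $\pi_1(\cP_d^{\bfc\Theta})$ is a free group.

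The proof is, as the phrase ``immediate consequence'' indicates, essentially bookkeeping once \ref{main} is in hand; the one place where care is genuinely needed --- and the step I expect a reader to scrutinize --- is the identification of the correct closed poset in part (i): recognizing that the naive forbidden set fails to be closed, and pinning down its down-closure explicitly so that the topological description lines up with the combinatorial hypothesis of \ref{main}.
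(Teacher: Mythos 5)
Your proposal is correct and follows the paper's own route: the paper proves this corollary simply by invoking \ref{main}, and you have merely made the implicit bookkeeping explicit (choice of the closed posets $\Theta$, closedness under merges and inserts, and verification of hypotheses (i) and (ii)). Your careful point about part (i) --- that the naive forbidden set is not closed and must be replaced by its down-closure, so that the space is really ``at most one multiple real root, of multiplicity at most $3$'' --- is a legitimate reading of the paper's informal wording and is exactly the interpretation needed for \ref{main} to apply.
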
 
\begin{proof}
   The assertions follow immediately from \ref{main}.
\end{proof}

\smallskip 
The following illuminating example shows however that, in general,  $\cP_d^{\mathbf{c}\Theta}$ is not  free.

\begin{example}\label{extorsion}
  Let $d = 6$ and consider the smallest closed $\Theta \subseteq \bfOm_{\langle 6]}$ containing 
  \begin{eqnarray}\label{counterexample} 
  \qquad \{(3,1), (1,3), (1,3,1,1),(1,1,3,1), (2,2,1,1),(1,2,2,1),(1,1,2,2),(2,1,1,2)\}.
  \end{eqnarray}
  
  By \ref{lem2.14}, we need to consider the group freely generated by
  $$\gamma_{1,1},\gamma_{2,0}, \gamma_{1,3},\gamma_{2,2},\gamma_{3,1},\gamma_{4,0}$$  as well as by the dummy generators $\gamma_{0,2} = \gamma_{0,4} = 1$. 
  We then have to impose the relations of type (3) for the elements
  $$
    (3,1,1,1),\,(1,1,1,3)$$
  and relations of type (22) for 
  $$(2,2),\, (2,1,2,1),\,(1,2,1,2).$$
  \ref{fig6} sketches the $2$-complex representing these relations.
  The type (3) relations identify $\gamma_{0,4}$ and $\gamma_{1,3}$ as well
  as $\gamma_{3,1}$ and $\gamma_{4,0}$. 
  The type (22) relation, corresponding to $(2,2)$, then identifies
  $\gamma_{0,2}$ and $\gamma_{2,0}$.
  After these identifications, the type (22) relations that correspond to 
  $(2,1,2,1)$ and $(1,2,1,2)$ can be transformed as follows:
  $$\gamma_{1,1}\, \gamma_{3,1}\, \gamma_{0,4}^{-1}\, \gamma_{0,2}^{-1} 
     \longrightarrow \gamma_{1,1}\, \gamma_{3,1},$$
  $$\gamma_{2,0}\, \gamma_{4,0}\, \gamma_{1,3}^{-1}\, \gamma_{1,1}^{-1} 
     \longrightarrow \gamma_{3,1}\, \gamma_{1,1}^{-1}.$$
  Thus we are left with the generators $\gamma_{1,1}$, $\gamma_{2,2}$ and $\gamma_{3,1}$,
  subject to $\gamma_{1,1}\, \gamma_{3,1} = 1 = \gamma_{3,1}\, \gamma_{1,1}^{-1}$. 
  After eliminating $\gamma_{3,1}$, we are left with the generators $\gamma_{1,1}$, $\gamma_{2,2}$
  subject to $\gamma_{1,1}^2=1$ which is a presentation of $\Z/2\Z$. 
  Hence, we have shown that $\pi_1(\cP_6^{\mathbf{c}\Theta}) \approx \Z \ast \Z/2\Z$ for $\Theta$ introduced in \ref{counterexample}.
  If one wants to get rid of the free factor $\Z$ as well, one can throw $(1,1,3,1)$ into $\Theta$ which yields an
  identification of $\gamma_{2,2}$ and $\gamma_{3,1} = 1$ and leaves the other relation untouched.
\end{example}

\begin{figure}
  \begin{picture}(0,0)%
     \includegraphics[width=0.7\textwidth]{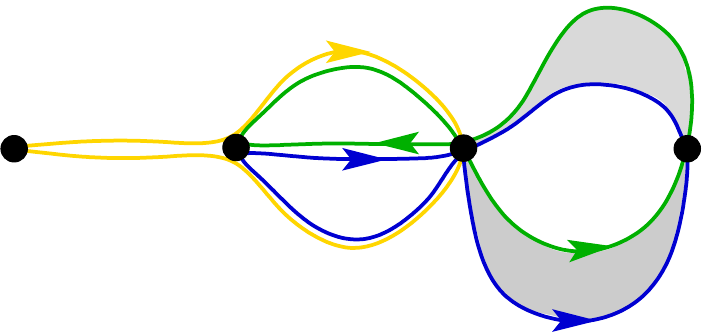}%
  \end{picture}%
  \setlength{\unitlength}{3947sp}%
  \begingroup\makeatletter\ifx\SetFigFont\undefined%
    \gdef\SetFigFont#1#2#3#4#5{%
    \reset@font\fontsize{#1}{#2pt}%
    \fontfamily{#3}\fontseries{#4}\fontshape{#5}%
    \selectfont}%
  \fi\endgroup%
  \begin{picture}(5769,2627)(1687,-3616)
    \put(5950,-1590){\makebox(0,0)[lb]{\smash{{\SetFigFont{10}{12}{\rmdefault}{\mddefault}{\updefault}{$\bf (3\,1\,1\,1)$}%}
     }}}}
    \put(5700,-3300){\makebox(0,0)[lb]{\smash{{\SetFigFont{10}{12}{\rmdefault}{\mddefault}{\updefault}{$\bf (1\,1\,1\,3)$}%}
     }}}}
    \put(2500,-2550){\makebox(0,0)[lb]{\smash{{\SetFigFont{10}{12}{\rmdefault}{\mddefault}{\updefault}{$\bf (2\,2)$}%}
     }}}}
    \put(4050,-1400){\makebox(0,0)[lb]{\smash{{\SetFigFont{10}{12}{\rmdefault}{\mddefault}{\updefault}{$\bf (2\,1\,1)$}%}
     }}}}
    \put(4050,-3200){\makebox(0,0)[lb]{\smash{{\SetFigFont{10}{12}{\rmdefault}{\mddefault}{\updefault}{$\bf (1\,1\,2)$}%}
     }}}}
    \put(4050,-2600){\makebox(0,0)[lb]{\smash{{\SetFigFont{10}{12}{\rmdefault}{\mddefault}{\updefault}{$\bf (1\,2\,1)$}%}
     }}}}
    \put(6600,-1200){\makebox(0,0)[lb]{\smash{{\SetFigFont{10}{12}{\rmdefault}{\mddefault}{\updefault}{$\bf (2\,1\,1\,1\,1)$}%}
     }}}}
    \put(5850,-2050){\makebox(0,0)[lb]{\smash{{\SetFigFont{10}{12}{\rmdefault}{\mddefault}{\updefault}{$\bf (1\,2\,1\,1\,1)$}%}
     }}}}
    \put(5800,-2750){\makebox(0,0)[lb]{\smash{{\SetFigFont{10}{12}{\rmdefault}{\mddefault}{\updefault}{$\bf (1\,1\,1\,2\,1)$}%}
     }}}}
    \put(6800,-3100){\makebox(0,0)[lb]{\smash{{\SetFigFont{10}{12}{\rmdefault}{\mddefault}{\updefault}{$\bf (1\,1\,1\,1\,2)$}%}
     }}}}
   \end{picture}%
  \caption{The assembly instructions for the $2$-dimensional $CW$-complex that realizes the fundamental group $\pi_1(\mathcal P_6^{\mathbf c\Theta})$ for $\Theta$ in \ref{counterexample}. The $2$-cells that realize the relations of type (3) are shaded. The three loops, to which the $2$-disks that realize relations of type (22) are attached, are shown in three colors (gold for $(2,2)$, green for $(2,1,2,1)$, blue for $(2,1,1,2)$).}
  \label{fig6}
\end{figure}

Using the same $\Theta$ as in \ref{counterexample} (\ref{extorsion}), but for $d = 8$, one can check 
that $\pi_1(\mathcal P_8^{\mathbf c\Theta})$ is a free group. 
Guided by this example, one is tempted to speculate that, for any 
closed $\Theta$ and $d$ large enough, 
$\pi_1(\cP_d^{\bfc\Theta})$ is free.

We will see that this is, in general,  not the case, but first we will show how
$\pi_1(\cP_d^{\bfc\Theta})$ stabilizes for $d$ large. This stabilization of the fundamental group follows almost immediately from the next lemma,  a
consequence of \ref{lem2.14}.

 \begin{lemma}
  \label{lemasym}
  Let $\Theta \subset \paOm{\langle d]}{\geq 2}$ be a closed poset
  and $\Theta' = \Theta \cap \bfOm_{\langle d-2]}$.
  Assume that no $\om$ with $|\om| \in \{d,d-2\}$ resides in 
  $\Theta_{=2}$.  
  Then $\pi_1(\cP_d^{\bfc\Theta}) \cong \pi_1(\cP_{d-2}^{\bfc\Theta'})$.
\end{lemma}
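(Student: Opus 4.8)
The plan is to derive the isomorphism directly from the presentation furnished by \ref{lem2.14}. Recall that this presentation describes $\pi_1(\cP_d^{\bfc\Theta})$ as the free group $\mathcal G_d$ on the generators $\gamma_{ij}$ (with $i \neq 0$, $i+j \leq d-2$, $i+j \equiv d \bmod 2$), modulo one relation for each $\om \in \bfc\Theta_{=2} = \paOm{\langle d]}{=2} \setminus \Theta$. So I would begin by comparing the generating sets of $\mathcal G_d$ and $\mathcal G_{d-2}$: the generators $\gamma_{ij}$ of $\mathcal G_d$ with $i + j \leq d-4$ (equivalently $i+j \equiv d-2 \bmod 2$ and $i+j \le (d-2)-2$) are exactly the generators of $\mathcal G_{d-2}$, while $\mathcal G_d$ additionally carries the ``top'' generators with $i+j \in \{d-3, d-2\}$.

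The heart of the argument is the hypothesis that no $\om$ with $|\om| \in \{d, d-2\}$ lies in $\Theta_{=2}$; equivalently, \emph{every} composition $\om \in \paOm{\langle d]}{=2}$ with $|\om| \in \{d, d-2\}$ lies in $\bfc\Theta_{=2}$, so the corresponding relations of type (3) and (22) are all present in the presentation of $\pi_1(\cP_d^{\bfc\Theta})$. I would argue that these ``top'' relations suffice to eliminate every generator $\gamma_{ij}$ with $i+j \in \{d-3, d-2\}$ in terms of the lower ones, using Tietze transformations exactly as in the proof of \ref{main}(i)--(ii): a type (3) relation $\gamma_{i,j+1}\gamma_{i+1,j}^{-1}=1$ with $i + (j+1) = d-2$ lets one drop $\gamma_{i,j+1}$ and one relation; a type (22) relation coming from $\om$ with $|\om|=d$ identifies a top generator with another generator (possibly top, possibly of weight $d-2$ once earlier eliminations are performed) and again removes one generator and one relation. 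A careful bookkeeping — done by induction on $i+j$ decreasing from $d-2$, or by first handling all $\om=(1^a\,3\,1^b)$ with $a+b+3=d$ via type (3), then the $(1^a\,2\,1^b\,2\,1^c)$ with $a+b+c+4 \in \{d-2,d\}$ via type (22) — shows that after removing all top generators and the top relations, what remains is precisely the free group on the generators $\gamma_{ij}$ with $i+j\le d-4$ modulo the relations indexed by $\om \in \bfc\Theta_{=2}$ with $|\om| \le d-4$. Since $\Theta' = \Theta \cap \bfOm_{\langle d-2]}$ and $\paOm{\langle d-2]}{=2} = \{\om \in \paOm{\langle d]}{=2} : |\om| \le d-2\}$, and since by hypothesis the weight-$(d-2)$ compositions contribute no relation-bearing elements to either side that survives (they are all in $\bfc\Theta_{=2}$ and their relations were used up in the reduction, while none sits in $\Theta'_{=2}$), the set $\bfc\Theta'_{=2}$ is exactly $\{\om \in \bfc\Theta_{=2} : |\om| \le d-4\}$. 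Hence the remaining presentation is literally the presentation of $\pi_1(\cP_{d-2}^{\bfc\Theta'})$ given by \ref{lem2.14}, and the isomorphism follows.

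I would expect the main obstacle to be the bookkeeping in the Tietze-reduction step: one must check that the chain of identifications produced by the type (3) and type (22) relations among the top generators is \emph{consistent} (never forces two originally-distinct lower generators to be identified, and never forces a nontrivial word in lower generators to be trivial) and \emph{complete} (every top generator is reached). The cleanest way I see to do this is to observe that the relations coming from weight-$d$ and weight-$(d-2)$ compositions form, in the graph-theoretic language of $\Gr_d$ versus $\Gr_{d-2}$, exactly the data of collapsing the ``outer layer'' of the graph $\Gr_d$ onto $\Gr_{d-2}$ (cf.\ \ref{fig2a} and \ref{fig2}), which is visibly a homotopy equivalence; translating that collapse into the algebra gives the required consistency for free. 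An alternative, perhaps more robust, route is purely geometric: exhibit a closed subspace of $\cP_d^{\bfc\Theta}$ homeomorphic (or homotopy equivalent) to $\cP_{d-2}^{\bfc\Theta'}$ — e.g.\ polynomials of the form $(x^2+1)\cdot Q(x)$ with $\deg Q = d-2$ — and show the inclusion is a $\pi_1$-isomorphism by a general-position / transversality argument, but I would only fall back on this if the algebraic bookkeeping turns out to be unwieldy.
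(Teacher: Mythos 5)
Your overall strategy coincides with the paper's (reduce the presentation of \ref{lem2.14} by Tietze moves, using the hypothesis that every codimension-two pattern of weight $d$ or $d-2$ lies in $\bfc\Theta_{=2}$), but the final matching step contains a genuine error. First, $\bfc\Theta'_{=2}$ is \emph{not} $\{\om\in\bfc\Theta_{=2}\,:\,|\om|\le d-4\}$: since no codimension-two $\om$ with $|\om|=d-2$ lies in $\Theta$, none lies in $\Theta'$, so every such $\om$ belongs to $\paOm{\langle d-2]}{=2}\setminus\Theta'=\bfc\Theta'_{=2}$ and contributes a relation to the \ref{lem2.14}-presentation of $\pi_1(\cP_{d-2}^{\bfc\Theta'})$; you have the indexing backwards when you argue from ``none sits in $\Theta'_{=2}$'' (relations are indexed by the complement). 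Second, on the $d$-side the weight-$(d-2)$ relations are \emph{not} ``used up'' leaving only relations with $|\om|\le d-4$: a type (22) relation for $\om=(\underbrace{1\cdots1}_i 2\underbrace{1\cdots1}_j 2\underbrace{1\cdots1}_\ell)$ with $|\om|=d-2$ involves two generators of weight $d-2$ and two of weight $d-4$, so after the weight-$(d-2)$ generators are eliminated it leaves the residual identification $\gamma_{i+j,\ell}=\gamma_{i,j+\ell}$ among weight-$(d-4)$ generators, which the hypothesis does not make trivial. A concrete test: take $\Theta=\paOm{\langle d]}{\geq 3}$ with $d\ge 6$, so $\Theta_{=2}=\emptyset$ and the hypothesis holds vacuously. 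Then $\pi_1(\cP_d^{\bfc\Theta})=0$ by \ref{lm2bN}, but your claimed reduced presentation (all $\gamma_{ij}$ with $i+j\le d-4$, modulo the relations with $|\om|\le d-4$) is a nontrivial group, since the weight-$(d-2)$ generators $\gamma_{i,d-4-i}$, $i\neq 0$, occur in no relation of weight $\le d-4$. (There is also a minor parity slip: no generators with $i+j=d-3$ exist.)

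The statement survives because the same residual weight-$(d-4)$ identifications arise, via the same weight-$(d-2)$ relations, when one reduces the $(d-2)$-side presentation --- but you compare against the \emph{unreduced} $(d-2)$-presentation, so as written the two sides do not match. The clean repair is the paper's argument: strip only the weight-$d$ layer. The weight-$d$ type (3) relations (all present by hypothesis) identify every $\gamma_{ij}$ with $i+j=d-2$ with $\gamma_{0,d-2}=1$; the hypothesis on weight $d-2$ is then used solely to guarantee that the weight-$(d-2)$ type (3) relations are present in \emph{both} presentations, forcing all weight-$(d-2)$ generators to equal $1$, so that the residual identities produced by the weight-$d$ type (22) relations are vacuous and can be discarded together with the weight-$d$ generators. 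What remains is then literally the \ref{lem2.14}-presentation of $\pi_1(\cP_{d-2}^{\bfc\Theta'})$, weight-$(d-2)$ relations included. Alternatively, you could push the reduction one layer further on \emph{both} sides and match the leftover weight-$(d-4)$ identifications explicitly, but that bookkeeping must actually be carried out rather than declared empty.
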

\begin{proof}
  By \ref{lem2.14} and the structure of $\Theta$,  all the relations of type 
  (3) and (22), that correspond to $\om \in \paOm{\langle d]}{=2}$ 
	with $|\om| \in\{d,d-2\}$ and define $\pi_1(\cP_{d}^{\bfc\Theta})$,
  are part of a presentation of $\pi_1(\cP_{d-2}^{\bfc\Theta'})$.
  Indeed, the generators $\gamma_{ij}$ for $i+j+2= d$ 
  and the relations of type (3) and (22), corresponding to 
  $\om \in \bfc \Theta_{=2}$ and $|\om| = d$,
  are the only difference to the presentations of 
  $\pi_1(\cP_{d-2}^{\bfc\Theta'})$ and $\pi_1(\cP_{d}^{\bfc\Theta})$.

  The relations of type (3) identify all $\gamma_{ij}$ for $i+j+2 \in \{d,d-2\}$.
  Since $\gamma_{0,d-2} = \gamma_{0,d-4} = 1$, this implies that 
  $\gamma_{ij} = 1$ for $i+j+2 \in \{d,d-2\}$. 
  The relations of type (22), corresponding to $\om \in \bfc\Theta_{=2}$
  with $|\om| = 2$, then turn into identifications of
  some $\gamma_{ij}$ for $i+j+2 = d-2$. But since these elements are already 
  identified with $1$, the relations of type (22) imply no independent identities, and
  hence can be removed.
  
  As a result, we have obtained the generators and relations for
  $\pi_1(\cP_{d-2}^{\bfc\Theta'})$, and the assertion follows.
\end{proof}

Now we can deduce a stabilization result for the fundamental group.

\begin{theorem} 
  \label{corasym}
  Let $\Theta \subseteq \paOm{\langle d]}{\geq 2}$ 
  be a closed poset. For $d' \geq d+2$ such that $d' = d \equiv 2$, 
  let $\Theta_{d'}$ be the 
  smallest closed poset in $\bfOm_{\langle {d'}]}$ containing
  $\Theta$. Then for $d' \geq d+2$, we have an isomorphism 
	$\pi_1(\cP_{d'}^{\bfc\Theta_{d'}}) \cong \pi_1(\cP_{d+2}^{\bfc\Theta_{d+2}})$.
\end{theorem}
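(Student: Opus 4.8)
The plan is to reduce \ref{corasym} to an inductive application of \ref{lemasym}. Fix $\Theta \subseteq \paOm{\langle d]}{\geq 2}$ closed, and for $d' \geq d+2$ with $d' \equiv d \bmod 2$ let $\Theta_{d'} \subseteq \bfOm_{\langle d']}$ be the smallest closed poset containing $\Theta$. The key observation is that, because $\Theta \subseteq \bfOm_{\langle d]}$ and closure under the merge and insert operations can only \emph{decrease} the value of $|\om|$ (merges preserve $|\om|$ while decreasing $|\om|'$, and inserts applied downward decrease $|\om|$; going down in the poset never increases $|\om|$), every $\om \in \Theta_{d'}$ satisfies $|\om| \leq d$. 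In particular, for $d' \geq d+2$ there is \emph{no} element $\om \in (\Theta_{d'})_{=2}$ with $|\om| \in \{d', d'-2\}$, since $d'-2 \geq d \geq |\om|$ would force $|\om| = d' - 2 = d$, and then $\om$ would be a codimension-$2$ composition of $d$; but one must check this cannot lie in $\Theta_{d'}$ unless it was forced, and in the generic step $d' > d+2$ even $|\om| = d'-2 > d$ is impossible outright. So for $d' \geq d+4$ the hypothesis of \ref{lemasym} is satisfied vacuously on the nose, and for $d' = d+2$ it is satisfied provided no codimension-$2$ composition of $d$ was already in $\Theta$ forcing trouble — which I will need to argue is automatic from the way $\Theta_{d+2}$ is generated. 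Let me instead phrase the induction so this edge case is absorbed.

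The cleaner route: first prove directly that $\Theta_{d'} \cap \bfOm_{\langle d'-2]} = \Theta_{d'-2}$ for all $d' \geq d+2$. One inclusion is clear since $\Theta_{d'-2}$ is closed in $\bfOm_{\langle d'-2]}$ and contains $\Theta$, while $\Theta_{d'} \cap \bfOm_{\langle d'-2]}$ is closed in $\bfOm_{\langle d'-2]}$ and contains $\Theta$, giving $\Theta_{d'-2} \subseteq \Theta_{d'} \cap \bfOm_{\langle d'-2]}$; the reverse inclusion follows because $\Theta_{d'-2}$, being closed in $\bfOm_{\langle d'-2]}$, generates a closed subset of $\bfOm_{\langle d']}$ (closure is monotone and the operations respect the ambient degree bound), and this closed subset contains $\Theta$, hence contains $\Theta_{d'}$, so intersecting back down gives $\Theta_{d'} \cap \bfOm_{\langle d'-2]} \subseteq \Theta_{d'-2}$. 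With this identity in hand, apply \ref{lemasym} with $d \rightsquigarrow d'$, $\Theta \rightsquigarrow \Theta_{d'}$, $\Theta' \rightsquigarrow \Theta_{d'-2}$: its hypothesis, that no $\om$ with $|\om| \in \{d', d'-2\}$ lies in $(\Theta_{d'})_{=2}$, holds because, as noted above, every element of $\Theta_{d'}$ has $|\om| \leq d < d'-2+? $ — more precisely $|\om| \leq d$, and for $d' \geq d+2$ one has $d'-2 \geq d \geq |\om|$, so the only possible violation is $|\om| = d'-2 = d$; and a codimension-$2$ composition of $d$ lies in $\Theta_{d'}$ only if it already lay in $\Theta$, in which case it also lies in $\Theta_{d'-2}$, so it is \emph{not} one of the ``new'' strata and does not obstruct the argument of \ref{lemasym} — I will need to check that \ref{lemasym}'s proof only uses the hypothesis to kill the genuinely new relations, which is exactly what its proof does.

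Granting the above, \ref{lemasym} yields $\pi_1(\cP_{d'}^{\bfc\Theta_{d'}}) \cong \pi_1(\cP_{d'-2}^{\bfc\Theta_{d'-2}})$ for every $d' \geq d+2$, and chaining these isomorphisms down from $d'$ to $d+2$ proves the theorem. The main obstacle I anticipate is the bookkeeping around the edge case $d' = d+2$, i.e.\ verifying that codimension-$2$ compositions of $d$ never sit in $(\Theta_{d+2})_{=2}$ in a way that breaks \ref{lemasym}; this requires checking that the smallest closed poset containing $\Theta$ in the larger degree does not acquire such elements beyond those already present in $\Theta$ — which follows from monotonicity of closure together with the fact that the merge and insert operations cannot produce a composition of size $d$ that was not already reachable within $\bfOm_{\langle d]}$. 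I would isolate this as a short lemma (``$\Theta_{d'}\cap\bfOm_{\langle d]} = \Theta$ and more generally $\Theta_{d'}\cap\bfOm_{\langle m]}=\Theta_m$'') and then the rest is a one-line induction.
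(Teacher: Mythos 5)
Your overall route is the paper's: chain applications of \ref{lemasym}, using the identity $\Theta_{d'}\cap\bfOm_{\langle d'-2]}=\Theta_{d'-2}$ (which you argue correctly, and which the paper asserts without detail). But the combinatorial claim on which you base the verification of the hypothesis of \ref{lemasym} is false, and it is backwards on both counts: in the order of \ref{def2.1}, passing to \emph{smaller} elements means applying merges and inserts, and a merge \emph{increases} the codimension $|\om|'$ by one (e.g.\ $(1,1)\succ(2)$), while an insert \emph{increases} $|\om|$ by two. Hence downward closure does not preserve the bound $|\om|\leq d$: the smallest closed poset $\Theta_{d'}\subseteq\bfOm_{\langle d']}$ containing $\Theta$ in general contains compositions with $|\om|$ as large as $d'$, since every insert keeping $|\om|\leq d'$ is now permitted. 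So your ``key observation'' that every $\om\in\Theta_{d'}$ satisfies $|\om|\leq d$ is wrong, and the ensuing verification does not stand as written. What is true, and is all that is needed, is the statement about codimension-two elements only: since each merge or insert strictly increases $|\om|'$, no element of $\Theta\subseteq\paOm{\langle d]}{\geq 2}$ can produce a new composition of codimension $2$ by a nonempty sequence of operations, so $(\Theta_{d'})_{=2}=\Theta_{=2}$, whose elements all have $|\om|\leq d\leq d'-4$ when $d'\geq d+4$. This is exactly how the paper justifies that no $\om$ with $|\om|\in\{d',d'-2\}$ lies in $(\Theta_{d'})_{=2}$, and it is the piece your argument must replace.

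A second, smaller point: the theorem only requires \ref{lemasym} for $d'\geq d+4$; the case $d'=d+2$ of the statement is the trivial identity, so the chain of isomorphisms stops at $d+2$ and the ``edge case'' you labor over --- descending from $d+2$ to $d$ when $\Theta_{=2}$ contains compositions of size $d$ --- never has to be handled. Moreover, your suggestion that \ref{lemasym} would still go through there because the offending codimension-two strata are ``not new'' is unjustified and in general false: if some $\om\in\Theta_{=2}$ has $|\om|\in\{d,d-2\}$, the relations that force $\gamma_{ij}=1$ for $i+j+2\in\{d,d-2\}$ are missing, and $\pi_1(\cP_{d}^{\bfc\Theta})$ need not agree with $\pi_1(\cP_{d+2}^{\bfc\Theta_{d+2}})$; the paper's \ref{extorsion} together with the remark following it ($\Z/2\Z$ for $d=6$ versus a free group for $d=8$ with the same generating set \ref{counterexample}) is a concrete instance. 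Once the codimension argument replaces the false claim and the induction is anchored at the trivial base case $d'=d+2$, the rest of your plan coincides with the paper's proof.
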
 
\begin{proof}
  We prove the assertion by induction on $d'$. 
  For $d' = d+2$, the assertion is trivial, since the relevant polynomial spaces are identical. For $d' \geq d+4$,  
  we have that $\Theta_{=2}$ contains no $\om$ such that 
  $|\om| \in \{d',d'-2\}$. This implies that
  $(\Theta_{d'})_{=2}$ contains no $\om$ such that 
	$|\om| \in \{d',d'-2\}$, and $\Theta_{d'-2} = \Theta_{d'} \cap \bfOm_{\langle d'-2]}$. 
  Hence by \ref{lemasym},
  $ \pi_1(\cP_{d'}^{\bfc\Theta_{d'}}) \cong \pi_1(\cP_{d'-2}^{\bfc\Theta_{d'-2}}) $.
  The assertion now follows from the induction hypothesis.
\end{proof}

Before we can provide a negative answer to 
the question whether 
$\pi_1(\cP_{d}^{\bfc\Theta})$ is free for large $d$, we need the following
simple lemma, which is an immediate consequence of \ref{lem2.14} and the
definition of the free product of finitely presented groups.

\begin{lemma}
  \label{lemafree}
  Let $\Theta \subset \paOm{\langle d]}{\geq 2}$
  be closed. Assume that for some $d' < d$ such that $d' \equiv d \mod 2$, 
  the poset $\Theta$ contains all
  $\om \in \paOm{\langle d]}{=2}$ with $|\om| = d'$. 

  Then $\pi_1(\cP_d^{\bfc\Theta})$ is the free product of
  $\pi_1(\cP_{d'-2}^{\bfc\Theta})$ and a group  presented by
  relations of type (3) and (22) for $\om \not\in \Theta$, such that 
  $|\om|> d'$ and $|\om|' = 2$.
\end{lemma}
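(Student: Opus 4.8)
The plan is to mimic the structure of the proof of \ref{lemasym}, but now tracking \emph{which} generators and relations survive when we remove from $\Theta$ all $\om$ with $|\om|' = 2$ and $|\om| \leq d'$. Write $\Theta'' := \Theta \cap \bfOm_{\langle d'-2]}$, which is automatically closed, and observe that by hypothesis $\Theta$ contains every $\om \in \paOm{\langle d]}{=2}$ with $|\om| = d'$, and (since $\Theta$ is closed and $\prec$-downward sets are determined by their codimension-two members together with all smaller strata) also every $\om \in \paOm{\langle d]}{=2}$ with $|\om| < d'$. Consequently $\bfc\Theta_{=2}$, the complement used in \ref{lem2.14}, consists only of $\om$ with $|\om| > d'$. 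The presentation of $\pi_1(\cP_d^{\bfc\Theta})$ from \ref{lem2.14} therefore has generators $\gamma_{ij}$ for all admissible $(i,j)$ with $i+j \leq d-2$, and relations of type (3) and (22) only for those $\om$ with $|\om| > d'$ and $|\om|' = 2$.

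Next I would separate the generators into two blocks according to the value of $i+j+2 = |\om_{ij}|$: the ``low'' block $L$ with $i+j+2 \leq d'-2$ and the ``high'' block $H$ with $i+j+2 \geq d'$ (there is no generator with $i+j+2 = d'-1$ or any odd value off the parity of $d'$, so these two blocks exhaust the generators, since $|\om_{ij}| \equiv d \equiv d' \bmod 2$). Every relation of type (3) or (22) appearing in our presentation comes from some $\om$ with $|\om| > d'$; inspecting the explicit forms $\gamma_{i,j+1}\gamma_{i+1,j}^{-1}$ and $\gamma_{i+j,\ell}\gamma_{i+j+2,\ell}\gamma_{i,j+\ell+2}^{-1}\gamma_{i,j+\ell}^{-1}$, each such relation only involves $\gamma$'s whose index sum plus two is at least $|\om|-2 \geq d'-1$, hence, by parity, at least $d'$ — i.e. it involves only generators from the high block $H$ (remembering that any $\gamma_{0,s}$ is the identity and may be deleted). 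Thus the presentation splits as a free product: the low block $L$ is free on its generators with no relations among them, and this free part is exactly the presentation of $\pi_1(\cP_{d'-2}^{\bfc\Theta})$ furnished by \ref{lem2.14} applied to the closed poset $\Theta'' = \Theta \cap \bfOm_{\langle d'-2]}$ (here I use that $\bfc(\Theta'')_{=2}$ over $\bfOm_{\langle d'-2]}$ is empty, again because $\Theta$ contains all codimension-two patterns of size $\leq d'-2$). The remaining factor is the group $G_{\text{high}}$ presented by the high-block generators subject to the relations of type (3) and (22) for $\om \notin \Theta$ with $|\om| > d'$, $|\om|' = 2$ — which is precisely the group asserted in the statement.

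To finish, I would assemble these observations: $\pi_1(\cP_d^{\bfc\Theta}) = F(L) * G_{\text{high}} \cong \pi_1(\cP_{d'-2}^{\bfc\Theta}) * G_{\text{high}}$, where the first isomorphism is the decomposition of a finitely presented group whose relators all lie in a sub-alphabet (the standard fact that $\langle A \sqcup B \mid R\rangle \cong \langle A\rangle * \langle B \mid R\rangle$ when $R \subseteq \langle B\rangle$), and the second is \ref{lem2.14} for $\Theta''$. This is exactly the claimed free-product description.

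I expect the main obstacle to be the bookkeeping in the middle step: verifying rigorously that \emph{every} relation coming from an $\om$ with $|\om| > d'$ is supported entirely on high-block generators. One must check the type (22) relation carefully, since it contains a factor $\gamma_{i+j,\ell}$ whose index sum $i+j+\ell$ is two less than the others; one needs $i+j+\ell+2 \geq d'$, equivalently $|\om| = i+j+\ell+4 \geq d'+2$, which holds because $|\om| > d'$ and $|\om| \equiv d' \bmod 2$ forces $|\om| \geq d'+2$. The case $j=0$, where two of the four generators coincide and the relation shortens (as noted after \ref{lem2.14}), should be handled as a remark rather than a separate argument. A secondary point to state cleanly is that the sub-poset $\Theta \cap \bfOm_{\langle d'-2]}$ is closed and has the property needed to apply \ref{lem2.14} with no surviving relations, which follows from the hypothesis together with \ref{cor:cw-structure}(i).
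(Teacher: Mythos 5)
Your overall strategy---splitting the generators $\gamma_{ij}$ of the presentation from \ref{lem2.14} into a low block ($i+j+2\le d'-2$) and a high block ($i+j+2\ge d'$), checking that no surviving relation mixes the blocks, and invoking the free product of presentations---is the intended route, and your parity bookkeeping for the type (22) relations with $|\om|>d'$ is correct. However, the step where you claim that closedness of $\Theta$ together with the hypothesis forces $\Theta$ to contain every $\om\in\paOm{\langle d]}{=2}$ with $|\om|<d'$ is false, and everything you derive from it breaks down. The order $\prec$ is generated by merges (which preserve $|\om|$ and raise $|\om|'$) and insertions (which raise $|\om|$ by $2$ and raise $|\om|'$), so any $\om''\prec\om$ satisfies $|\om''|\ge|\om|$ and $|\om''|'\ge|\om|'$; in particular, two codimension-two compositions of different sizes are never comparable, and downward closure plus ``all codimension-two $\om$ with $|\om|=d'$ lie in $\Theta$'' says nothing about codimension-two patterns of size $<d'$. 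Hence $\bfc\Theta_{=2}$ may well contain elements with $|\om|\le d'-2$, your assertion that the only relations are those with $|\om|>d'$ fails, and your identification of the low factor with the free group $F(L)$ (equivalently, the claim that $\bfc(\Theta'')_{=2}=\emptyset$) is untenable. It even contradicts the application made of the lemma immediately after its statement: there $d'=8$ and the first factor is $\pi_1(\cP_6^{\bfc\Theta})\cong\Z/2\Z$ from \ref{extorsion}, which is not free.

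The repair is to keep the low-block relations rather than argue them away. The relations attached to $\om\in\bfc\Theta_{=2}$ with $|\om|\le d'-2$ involve only generators $\gamma_{ab}$ with $a+b+2\le d'-2$, and together with those generators they are verbatim the presentation that \ref{lem2.14} gives for $\pi_1(\cP_{d'-2}^{\bfc(\Theta\cap\bfOm_{\langle d'-2]})})$; the relations attached to $\om$ with $|\om|\ge d'+2$ involve only high-block generators (your computation); and the sole role of the hypothesis is to guarantee that no relation with $|\om|=d'$ occurs---the type (22) relations of that size are exactly the ones that would involve generators from both blocks (index sums $d'-4$ and $d'-2$). With this corrected split, the standard fact $\langle A\sqcup B\mid R_A\sqcup R_B\rangle\cong\langle A\mid R_A\rangle\ast\langle B\mid R_B\rangle$ (with $R_A$ supported on $A$ and $R_B$ on $B$) yields the lemma, the first factor being $\pi_1(\cP_{d'-2}^{\bfc\Theta})$ with its own, possibly nontrivial, relations.
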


Let $\Theta \subset  \paOm{\langle 6]}{=2}$ be the poset from \ref{extorsion} (see \ref{counterexample}).
For $d \geq 10$, let  $\Theta_d$ be the smallest 
closed poset in $\bfOm_{\langle d]}$ containing $\Theta$ and
all $\om \in \paOm{\langle d]}{=2}$ with $|\om| = 8$.
For $d \geq 10$, we have that $\Theta_d$ satisfies the conditions of
\ref{lemafree} with $d' = 8$. 
Then, for $d \geq 10$, $\pi_1(\cP_d^{\bfc\Theta_d})$ is the free product of
$\pi_1(\cP_d^{\bfc\Theta_6}) \cong \Z/2\Z$ and some group.
In particular, $\pi_1(\cP_d^{\bfc\Theta_d})$ is not free \emph{stably}, as $d \to \infty$.
\smallskip
 
Next we formulate some consequences of our results about spaces of 
polynomials with restrictions on the multiplicities of their \emph{critical points}. 
For any closed poset $\Theta \subset \bfOm$ and any $d > 0$, we denote by 
$\cP_{d+1}^{\crit \; \bfc \Theta}$ the 
space of polynomials of degree $d+1$, whose derivatives belong to 
$\cP_d^{\bfc \Theta}$.
The homotopy statement of the following corollary to \ref{lm2b} and \ref{main}
is well known.

\begin{corollary}
  \label{cor2.13} 
  For any closed poset $\Theta \subset \bfOm$ and any $d > 0$, the space 
  $\cP_{d+1}^{\crit\; \bfc \Theta}$ 
  is homotopy equivalent to the space $\cP_d^{\bfc\Theta}$. 
  In particular, $\pi_1(\cP_{d+1}^{\crit\; \bfc \Theta})$ is free for
  \begin{itemize}
    \item[(i)] 
      $\Theta = \paOm{\langle d]}{\geq 2}$,
\item[(ii)] $\Theta$ contains all $\omega \in \bfOm_{\langle d]}$ such
      that $\om = (\underbrace{1\cdots 1}_i 2 \underbrace{1\cdots1}_j 2 \underbrace{1\cdots 1}_\ell)$ for some $j > 0$.
    \item[(iii)] $\Theta$ is such that for $\omega \in \bfOm_{\langle d]}$ with
       $|\om|' = 2$ we have
       $$\om \in \Theta \Leftrightarrow \om = (\underbrace{1\cdots 1}_i 3 \underbrace{1\cdots1}_j)$$ for some $i,j \geq 0$.
  \end{itemize}
  Moreover, in case {\rm (iii)}, we have $\pi_1(\cP_{d+1}^{\crit\; \bfc \Theta}) = \Z$. 
\end{corollary}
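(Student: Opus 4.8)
\textbf{Proof proposal for Corollary \ref{cor2.13}.}
The plan is to reduce everything to the already-established results \ref{lm2b} and \ref{main} via a single homotopy equivalence between the critical-point space and the root-multiplicity space. First I would make precise the map that does the work: differentiation. For a monic polynomial $Q(x) = x^{d+1} + b_d x^d + \cdots + b_0$ of degree $d+1$, its derivative $Q'(x) = (d+1)x^d + d b_d x^{d-1} + \cdots + b_1$ is, after dividing by the leading constant $d+1$, a monic polynomial of degree $d$; the critical points of $Q$ with their multiplicities are exactly the real roots of $Q'$ with their multiplicities. Thus sending $Q$ to $\tfrac{1}{d+1}Q'$ gives a well-defined map $\cP_{d+1}^{\crit\,\bfc\Theta} \to \cP_d^{\bfc\Theta}$, and it is a fiber bundle: the fiber over a fixed derivative is the affine line of possible values of the free constant $b_0$ (integration constant), so the fiber is $\R$ and contractible. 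A fibration with contractible fiber over a reasonable base is a homotopy equivalence (one can also see it directly as a deformation retraction scaling $b_0 \to 0$, which stays inside $\cP_{d+1}^{\crit\,\bfc\Theta}$ since it does not change the derivative).

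Granting the homotopy equivalence $\cP_{d+1}^{\crit\,\bfc\Theta} \simeq \cP_d^{\bfc\Theta}$, the corollary's assertions are immediate transcriptions of earlier results. For item (i), apply \ref{lm2b}: $\pi_1(\cP_d^{\bfc\paOm{\langle d]}{\geq 2}})$ is free (on $\tfrac{d(d-2)}{4}$ or $\tfrac{(d-1)^2}{4}$ generators), hence so is $\pi_1(\cP_{d+1}^{\crit\,\bfc\Theta})$. For items (ii) and (iii), apply the two cases of \ref{main}: in each case $\pi_1(\cP_d^{\bfc\Theta})$ is free, and in case (iii) it is in fact $\Z$ for $d \geq 4$, so the same holds for the critical-point space, giving the final sentence of the corollary. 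No new combinatorics is needed — the content is entirely the homotopy equivalence plus quotation of \ref{lm2b} and \ref{main}.

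The only genuine point requiring care — and the one I would expect to be the main obstacle, modest as it is — is verifying that the differentiation map really is a locally trivial fibration (or at least that the $b_0$-scaling is a well-defined deformation retraction) \emph{on the restricted domains}, i.e.\ that forbidding the patterns in $\Theta$ for the derivative is a condition depending only on the image point and not on $b_0$. This is clear because the real root multiplicity pattern of $Q'$ is unchanged under $b_0 \mapsto t\,b_0$, so the homotopy $H(Q,t) = x^{d+1} + b_d x^d + \cdots + b_1 x + t b_0$ stays in $\cP_{d+1}^{\crit\,\bfc\Theta}$ for all $t \in [0,1]$ and retracts it onto the slice $\{b_0 = 0\}$, which maps homeomorphically onto $\cP_d^{\bfc\Theta}$ under $Q \mapsto \tfrac{1}{d+1}Q'$ composed with the obvious identification of coefficients. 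With that in hand the proof is one line: ``The homotopy equivalence follows from the above; the remaining statements are restatements of \ref{lm2b} and \ref{main}.''
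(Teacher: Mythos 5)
Your proposal is correct and follows essentially the same route as the paper: the paper's proof simply observes that $\frac{d}{dx}:\cP_{d+1}^{\crit\;\bfc\Theta}\to\cP_d^{\bfc\Theta}$ is a trivial fibration with fiber $\R$ (your $b_0$-scaling retraction is exactly the justification of this) and then quotes \ref{lm2b} for (i) and \ref{main} for (ii)--(iii). Your extra care about the pattern condition depending only on the derivative is a sensible elaboration of the same argument, not a different approach.
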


\begin{proof} 
  The homotopy equivalence follows from the simple fact that 
  $\frac{d}{d x}: \cP_{d+1}^{\crit\; \bfc \Theta} \to \cP_d^{\bfc \Theta}$ 
  is a trivial fibration with fiber $\R$.
  Then (i) follows from \ref{lm2b} and (ii)-(iii) from \ref{main}.
\end{proof}

\ref{cor2.13} (iii) can be reformulated so that it can be seen as an analogue  
 of Arnold's \ref{th:Arnold1} for polynomials (instead of smooth functions).

\begin{corollary}
  \label{cor:arnA} 
  The fundamental group of the space of real monic polynomials of a fixed odd 
  degree $d > 1$ with no real critical points of multiplicity higher than $2$ 
  is isomorphic to the group $\Z$. 
\end{corollary}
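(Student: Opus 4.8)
The plan is to recognize the space in the statement as one of the spaces $\cP_d^{\crit\;\bfc\Theta}$ treated in \ref{cor2.13}(iii), and then to quote that corollary together with \ref{main}(ii); no new topology is needed beyond what is already established.

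First I would make explicit the dictionary between the critical points of a polynomial and the real roots of its derivative: a real point $x_0$ is a critical point of $P$ of multiplicity $m$ exactly when $P'$ has a zero of multiplicity $m$ at $x_0$. Hence a monic $P$ of odd degree $d$ has no real critical point of multiplicity exceeding $2$ if and only if $P'$ (normalised to be monic) has no real root of multiplicity $\geq 3$, i.e.\ $P' \in \cP_{d-1}^{\bfc\Theta_0}$, where $\Theta_0$ denotes the subposet of $\bfOm_{\langle d-1]}$ consisting of all compositions with at least one part $\geq 3$. Thus the space in the statement is precisely $\cP_d^{\crit\;\bfc\Theta_0}$. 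Next I would check that $\Theta_0$ fits the hypotheses of \ref{cor2.13}(iii) (equivalently of \ref{main}(ii)). It lies in $\paOm{\langle d-1]}{\geq 2}$, since a composition with a part $\geq 3$ satisfies $|\om|' = \sum_i(\om_i-1) \geq 2$. It is closed: the operations $\sM_j$ only enlarge parts and the operations $\sI_j$ only insert new parts equal to $2$, so a part $\geq 3$ can never be destroyed, and therefore $\om \in \Theta_0$ together with $\om' \prec \om$ forces $\om' \in \Theta_0$. Finally, among the compositions with $|\om|' = 2$, those in $\Theta_0$ are exactly the ones of the form $(\underbrace{1\cdots 1}_i\, 3\, \underbrace{1\cdots 1}_j)$, because $\sum_i(\om_i-1) = 2$ with some $\om_p \geq 3$ forces $\om_p = 3$ and all remaining parts equal to $1$, while conversely each such composition obviously lies in $\Theta_0$. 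So $\Theta_0$ is a closed poset of the type required in \ref{cor2.13}(iii).

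With these identifications in place, I would simply invoke \ref{cor2.13}(iii): the map $\frac{d}{dx}$ exhibits $\cP_d^{\crit\;\bfc\Theta_0}$ as a trivial $\R$-fibration over $\cP_{d-1}^{\bfc\Theta_0}$, and by \ref{main}(ii) the latter has fundamental group $\Z$ (here the hypothesis on $d$ is used only through the bound in \ref{main}(ii), applied to the even degree $d-1$). Consequently $\pi_1(\cP_d^{\crit\;\bfc\Theta_0}) \cong \Z$, which is the assertion. I expect the only part that is not entirely routine to be this initial bookkeeping: keeping straight the convention under which a critical point of multiplicity $m$ of $P$ corresponds to a root of $P'$ of multiplicity $m$, the passage $d \mapsto d-1$ (which turns an odd degree into an even one, harmless for \ref{main}(ii)), and the recognition of $\Theta_0$ as one of the posets covered by \ref{cor2.13}(iii); once all three are settled, the topological content is borrowed wholesale from the results proved above.
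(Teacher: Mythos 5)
Your proposal is correct and takes essentially the same route as the paper: there \ref{cor:arnA} is obtained simply as a reformulation of \ref{cor2.13}(iii) (hence of \ref{main}(ii) combined with the trivial $\R$-fibration $\frac{d}{dx}$), and your argument just spells out the bookkeeping the paper leaves implicit --- the dictionary between critical points of $P$ and roots of $P'$, and the verification that $\Theta_0$ is a closed subposet of $\paOm{\langle d-1]}{\geq 2}$ whose codimension-two members are exactly the patterns $(1\cdots 1\,3\,1\cdots 1)$. The only caveat (inherited from the paper's own statement rather than introduced by you) is that \ref{main}(ii) yields $\Z$ only for derivative degree $\geq 4$, i.e.\ $d \geq 5$; for $d=3$ the space is all of $\cP_3$ and is contractible, so your parenthetical remark that the hypothesis $d>1$ odd suffices via the bound in \ref{main}(ii) should be read with that restriction.
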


\section{$\pi_1(\cP_d^{\mathbf {c}\paOm{\langle d]}{\geq 2}})$ and 
cobordisms of plane curves with restricted vertical tangencies}  
\label{sec:cobordism} 

Results and constructions in this section are similar to the ones from  
Arnold's \ref{th:Arnold2}.  They  also provide a glimpse into  a new area  of research whose goal is 
to describe and compute new ``bordism'' and ``quasi-isotopy'' theories of traversing flows on manifolds 
with boundary (\cite{Ka4}, \cite{Ka5}). The flows under investigation have constrained tangency to 
the boundary patterns. Crudely, one might think of such theories as classical smooth bordisms groups 
$\mathcal B_\ast(\cP_d^{\bfc\Theta})$ of the spaces $\cP_d^{\bfc\Theta}$. 
%, \textcolor{red}{intimately related to} homotopy groups $\pi_\ast(\cP_d^{\mathbf {c}\Theta})$. 
The papers \cite{Ka4} and \cite{Ka5} depend crucially on the present article and its sequel \cite{KSW} and 
contain the multidimensional generalizations of some results from this section. 
They deal with immersions $\b: X \to \R \times Y$ of $n$-dimensional compact smooth manifolds $X$ into the 
product $\R \times Y$, where the compact $n$-dimensional manifold $Y$ is fixed, and the $\b$'s are considered up to bordisms 
in the realm of immersions. All the immersions involved are required to avoid a priory chosen tangency patterns 
$\Theta$ to the fibers of the projection map $\R \times Y \to Y$. In \cite{Ka5}, these considerations and computations 
are applied to study the, so called, {\it convex quasi-envelops of traversing flows} and their bordisms. 

\smallskip

\begin{figure}
  \centering
  \includegraphics[width=0.8\textwidth]{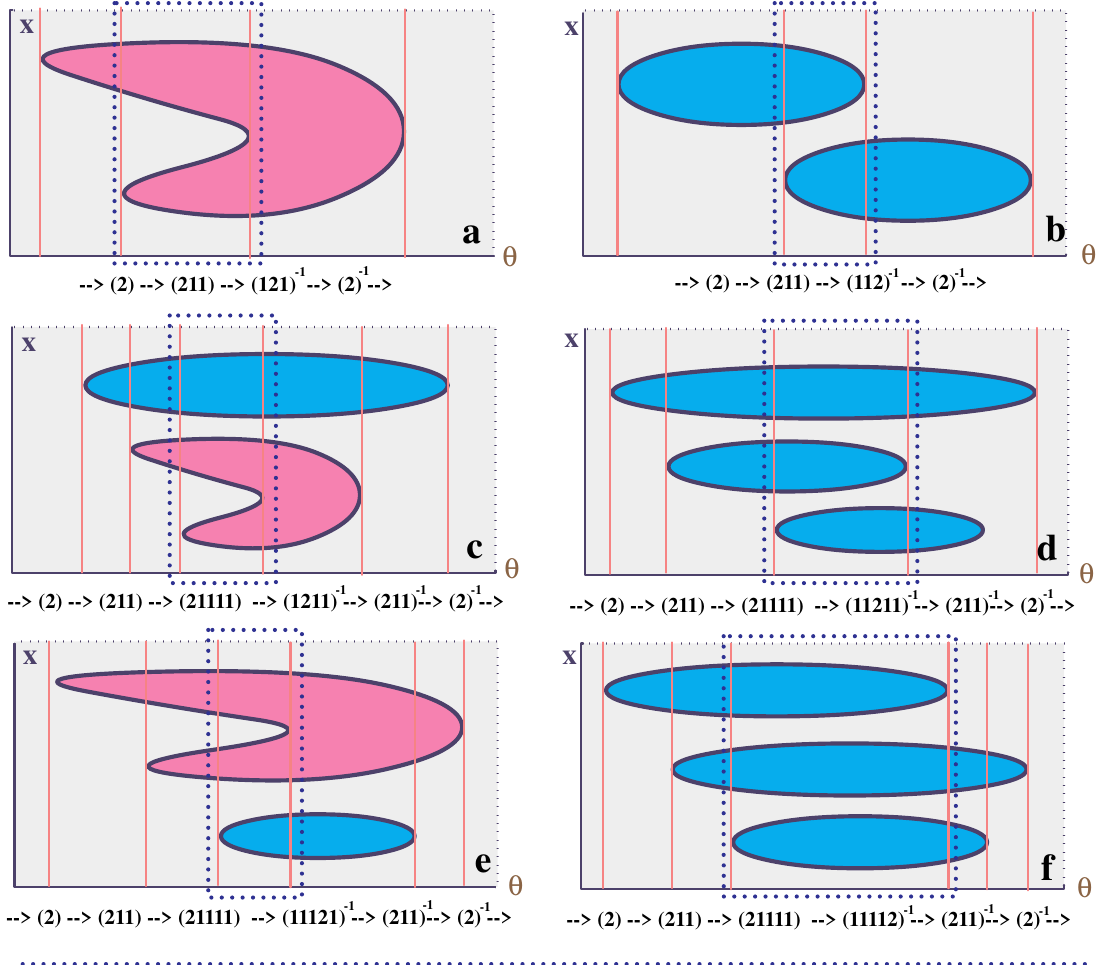}
   \caption{\small{A set $\{a, b, c, d, e, f\}$ of six generators, freely 
	generating the bordism group 
	$\mathcal B\big(\R \times S^1;\; \bfc\paOm{\langle 6]}{\geq 2}\big) 
	\approx \pi_1\big(\cP_6^{\bfc\paOm{\langle 6]}{\geq 2}}\big)$, is 
	shown as collections of regularly embedded curves in the cylinder with the coordinates 
	$(x, \psi) \in \R \times S^1$. Each collection of curves is generated 
	by a specific map 
	$\gamma: S^1 \to \cP_6^{\mathbf {c}\paOm{\langle 6]}{\geq 2}}$ as the 
	set of pairs $(\psi, x)$ with the property  $\gamma(\psi)(x) = 0$. 
	Each line $\{\psi = const\}$ is either transversal to the collection of curves, 
	or is quadratically tangent to it. No double tangent lines are 
	permitted. Each collection of curves is equipped with the circular 
	word (written under each of the six diagrams) that reflects the 
	transversal intersections of the loop $\gamma(S^1)$ with the 
	discriminant variety $\mathcal D_6 \subset \cP_6$.}} 
   \label{fig:kidney}
\end{figure}

The main result of this section is \ref{cobordism} whose proof is based on a number of technical results stated below.

\subsection{On classifiying maps to $\cP_d^{\bfc\Theta}$ }
Consider a compact smooth $n$-manifold $Y$ and an \emph{immersion} 
$\b: X \to \R \times Y$ of a smooth closed $n$-manifold $X$ into the interior 
of $\R \times Y$.  We denote by $\mathcal L$ the one-dimensional foliation, 
defined by the fibers $\mathcal L_y = \pi^{-1}(y)$ of the projection map $\pi: \R \times Y \to Y$. For 
each point $\sfx \in X$, we define 
$\mu_\b(\sfx)$ as the multiplicity of tangency between the 
$\sfx$-labeled branch $\b(X)_\sfx$ of $\b(X)$ -- the $\b$-image of the vicinity 
of $\sfx \in X$ -- and the leaf of $\mathcal L$ through 
$\b(\sfx)$. In general, the multiplicity $\mu_\b(\sfx)$ is either a natural number or $+\infty$.
However, in our settings, it is assumed to be finite. If the branch is transversal to the leaf, 
then $\mu_\b(\sfx) =1$.

We fix a natural number $d$ and assume that $\b$ is such that each leaf 
$\ell_\sfy$ of $\mathcal L$, $\sfy \in Y$, hits $\b(X)$ so that the 
following inequality holds:
\begin{eqnarray}\label{eq2.0}
  m_\b(\sfy) := \sum_{\{a \in \ell_\sfy \cap \b(X)\}} 
  \Big(\sum_{\{\sfx \in \b^{-1}(a)\}} \mu_\b(\sfx)\Big) \; \leq \; d.
\end{eqnarray}

We order the points $\{a_i\}$ of $\ell_\sfy \cap \b(X)$ by the 
values of their projections to $\R$ and introduce the combinatorial pattern 
$\om^\b(\sfy)$ of $\sfy \in Y$ as the sequence of multiplicities 
$\big\{\omega_i(\sfy) := \sum_{\{\sfx \in \b^{-1}(a_i)\}} \,
\mu_\b(\sfx)\big\}_i$. We denote by $D^\b(\sfy)$ the real divisor 
of the intersection $\ell_\sfy \cap \b(X)$, taken with 
multiplicities $\{\omega_i(\sfy)\}_i$.

\begin{proposition}\label{LIFT} 
  Let $Y$ be a smooth compact $n$-manifolds and $X$ a smooth closed $n$-manifold. 
  Then for any immersion $\b: X \to \R \times Y$, that satisfies \ref{eq2.0} together with the parity condition 
  $m_\b(\sfy) \equiv d \mod 2$ for all $y \in Y$, there exists a continuous map $\Phi_\b: Y \to \cP_d$ such that 
  $$\big\{(x, \sfy) \in  \R \times Y \big|\; \Phi_\b(\sfy)(x) = 0\big\} = 
	\b(X).$$

  If, for a given closed poset $\Theta \subset \bfOm_{\langle d]}$, 
  the immersion $\b$ is such that no $\omega^\b(\sfy)$ belongs to 
  $\Theta$, then $\Phi_\b$ maps $Y$ to $\cP_d^{\bfc\Theta}$.
\end{proposition}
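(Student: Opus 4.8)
\textbf{Proof proposal for Proposition \ref{LIFT}.}

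The plan is to construct $\Phi$ pointwise and then check continuity and the stratification claim. For each $\sfy \in Y$, the leaf $\mathcal L_\sfy \cong \R$ meets $\b(X)$ in finitely many points $a_1(\sfy), \ldots, a_k(\sfy)$, ordered by their $\R$-coordinate $t_1(\sfy) < \cdots < t_k(\sfy)$, and to each we have attached the multiplicity $\omega_i(\sfy) = \sum_{\sfx \in \b^{-1}(a_i)} \mu_\b(\sfx)$. By hypothesis \eqref{eq2.0} and the parity condition, $m := \sum_i \omega_i(\sfy) \leq d$ and $m \equiv d \bmod 2$. I would then \emph{define} $\Phi(\sfy)$ to be the monic real polynomial
$$\Phi(\sfy)(x) := \Big(\prod_{i=1}^{k} (x - t_i(\sfy))^{\omega_i(\sfy)}\Big) \cdot \big(x^2 + 1\big)^{(d-m)/2},$$
where the final factor absorbs the remaining degree through simple complex-conjugate pairs that never meet $\R$. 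By construction $\Phi(\sfy) \in \cP_d$, its real divisor is exactly $D^\b(\sfy)$, and its real root multiplicity pattern is $\om^\b(\sfy)$; in particular the zero locus $\{(x,\sfy) : \Phi(\sfy)(x) = 0\}$ in $\R \times Y$ is precisely $\b(X)$, which is the first assertion.

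The heart of the argument is continuity of $\sfy \mapsto \Phi(\sfy)$ as a map into $\cP_d \cong \R^d$ (the coefficient space). Locally in $Y$ this is a statement about how the roots $t_i(\sfy)$ and multiplicities $\omega_i(\sfy)$ vary. Since $\b$ is an immersion and the tangency multiplicities $\mu_\b$ are finite, near a fixed $\sfy_0$ the intersection $\mathcal L_\sfy \cap \b(X)$ is governed by a finite collection of local branches of $\b(X)$; each branch, in suitable coordinates, is the image of $s \mapsto (\varphi(s), \sfy(s))$ with $\varphi$ vanishing to finite order, so the contribution of that branch to the leaf-wise intersection is the zero set (with multiplicity) of a polynomial-like family depending continuously on $\sfy$. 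When several branches collide, or a branch develops or loses a tangency, individual roots $t_i(\sfy)$ may coalesce or split and individual multiplicities $\omega_i$ jump, but the \emph{product} $\prod_i (x - t_i(\sfy))^{\omega_i(\sfy)}$ — equivalently, the elementary symmetric functions of the real roots counted with multiplicity — varies continuously, because a root of multiplicity $p$ splitting into roots of multiplicities summing to $p$ (or a real double root emerging from a complex-conjugate pair, which then gets absorbed by adjusting the $(x^2+1)^{(d-m)/2}$ factor down by one) is a continuous operation on coefficients. I would make this precise by working in a neighborhood where the total local multiplicity $m_{\mathrm{loc}}(\sfy)$ contributed by the relevant branches is constant, writing the local factor of $\Phi(\sfy)$ as a monic degree-$m_{\mathrm{loc}}$ real polynomial whose coefficients are, by an implicit-function / resultant argument applied to the immersed branches, continuous in $\sfy$; the global $\Phi(\sfy)$ is then a product of such continuously varying local factors times the continuously (in fact locally constant, after matching multiplicities) varying simple-complex-pair factor.

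Finally, for the last sentence: by construction the real root multiplicity pattern of $\Phi(\sfy)$ is $\om^\b(\sfy)$, so if no $\om^\b(\sfy)$ lies in $\Theta$ then $\Phi(\sfy) \in \mathring{\sR}_d^{\om^\b(\sfy)}$ with $\om^\b(\sfy) \notin \Theta$, hence $\Phi(\sfy) \in \cP_d \setminus \cP_d^\Theta = \cP_d^{\bfc\Theta}$, as $\cP_d^\Theta = \bigcup_{\om \in \Theta} \mathring{\sR}_d^\om$. I expect the main obstacle to be the continuity verification at the ``bad'' values of $\sfy$ where the combinatorial type $\om^\b(\sfy)$ changes — one must argue carefully that although the pattern (and the number of real roots) is not locally constant, the monic polynomial with that real divisor, completed to degree $d$ by the fixed complex-pair construction, still depends continuously on $\sfy$; this is exactly the phenomenon that the merge and insertion operations $\sM_j, \sI_j$ from \ref{lm1} encode on the combinatorial side, and on the analytic side it reduces to the standard fact that the coefficients of a monic polynomial are continuous functions of its (multi-)set of roots.
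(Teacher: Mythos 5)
There is a genuine gap, and it sits exactly at the point you flag as the ``main obstacle'': your explicit formula
$\Phi(\sfy)(x)=\bigl(\prod_i (x-t_i(\sfy))^{\omega_i(\sfy)}\bigr)\cdot(x^2+1)^{(d-m)/2}$
is \emph{not} continuous at the parameter values where $m_\b(\sfy)$ jumps, and no amount of careful verification will make it so. Consider a fold-type quadratic tangency of a branch of $\b(X)$ with the leaves, modelled on $\{x^2=\psi\}$ with $Y$-coordinate $\psi$: for $\psi>0$ your local factor is $(x-\sqrt{\psi})(x+\sqrt{\psi})=x^2-\psi$, at $\psi=0$ it is $x^2$, but for $\psi<0$ the two real roots are gone, $m$ drops by $2$, and your recipe replaces the local factor by an extra copy of $x^2+1$. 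Thus $\lim_{\psi\to 0^-}\Phi(\psi)$ has the factor $x^2+1$ where $\Phi(0)$ has the factor $x^2$ --- a jump in the coefficients. The ``standard fact'' you invoke (coefficients depend continuously on the multiset of roots) does not help, because under your prescription the multiset of roots itself jumps: the complex pair created at the tangency is pinned at $\pm i$ instead of emerging from the point where the two real roots coalesced. Since such fold tangencies are exactly the generic transitions allowed in $\cP_d^{\bfc\Theta}$ (they are how the insertion moves $\sI_j$ are realized), the discontinuity is not an edge case but the typical behaviour the proposition must handle.

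The paper's proof avoids this by never fixing the non-real roots globally. Near a tangency of order $j$ it invokes Morin's normal form (via \cite{Ka2}, \cite{Mor1,Mor2}) to write the branch locally as the zero set of a monic degree-$j$ polynomial $u^j+\sum_{k\le j-2}\tilde y_k(\sfy)u^k$ with \emph{smooth} coefficients; multiplying these local factors over all branches meeting $\mathcal L_{\sfy_0}$ and topping up the degree by $(x^2+1)^{(d-m_\b(\sfy_0))/2}$ (with the exponent determined at the center $\sfy_0$, hence locally constant) gives a local family $\tilde P_{\sfy_0}(x,\sfy)$ of genuine degree $d$ whose zero set is $\b(X)$ and whose complex roots stay near the tangency as real roots disappear. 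The second issue your sketch does not address is how to pass from such local families to a single global $\Phi$: the paper glues them with a partition of unity, which is legitimate because for fixed $\sfy$ the set of monic degree-$d$ real polynomials with prescribed real divisor $D^\b(\sfy)$ is convex (the quotient by the fixed real-root factor is a monic polynomial that is positive on $\R$, and positivity is preserved under convex combinations). To repair your argument you would need both ingredients: a local normal form (Morin, or a Malgrange-preparation argument, which is also what makes your one-variable branch picture valid when $X$ and $Y$ are $n$-dimensional rather than curves) producing families of locally constant degree with continuously varying complex roots, and a convexity-plus-partition-of-unity step to assemble them; your final ``$\om^\b(\sfy)\notin\Theta$ implies $\Phi(\sfy)\in\cP_d^{\bfc\Theta}$'' paragraph is fine once such a $\Phi$ exists.
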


\begin{proof} 
  The following claim implies the assertion:
  There are smooth functions $\{a_j: Y \to \R\}_j$ 
  such that $\b(X)$ is the union of the solution sets of the 
  equations $\big\{\,x^d + \sum_{j=0}^{d-1}a_j(\sfy)\,x^j = 0\,\big\}_{\sfy \in Y}$.

  Let us justify this claim. By \cite[Lemma 4.1]{Ka2} and 
  Morin's Theorem \cite{Mor1, Mor2}, if a particular branch 
  $\b(X)_\kappa$ of $\b(X)$ is tangent to the leaf $\ell_{\sfy_0}$ 
  at a point $b = (\a, \sfy_0)$ with the order of tangency $j = \mu_{\b, \kappa}(b)$, then 
	there is a system of local coordinates $(u, \tilde y, \tilde z) \in \R \times \R^ j \times \R^ {n-j}$ in the 
  vicinity of $b$ in $\R \times Y$ such that: 

  \begin{itemize}
    \item[(1)] 
      $\b(X)_\kappa$ is given by the equation 
		  $\big\{u^j + \sum_{k=0}^{j-2}\,\tilde y_k \,u^k = 0\big\}$;

    \item[(2)] each nearby leaf $\ell_{\sfy}$ is given by the 
	    equations $\{\tilde y = \overrightarrow{const},\, \tilde z = \overrightarrow{const'}\}$. 
  \end{itemize}
 
  Setting $u=x-\a$ and writing the $\tilde y_k$ as smooth functions of 
  $\sfy \in Y$, the same $\b(X)_\kappa$ can be given by the equation 
  $$\big\{P_{\a, \kappa}(x, \sfy) := 
  (x-\a)^j + \sum_{j=0}^{j-2}a_{\kappa, k}(\sfy)\,(x-\a)^k = 0\big\},$$ 
  where $a_{\kappa, k}: Y \to \R$ are smooth functions, vanishing at 
  $\sfy_0$. 
  Therefore, there exists an open neighborhood $U_{\sfy_0}$ of 
  $\sfy_0$ in $Y$ such that, in $\R \times U_{\sfy_0}$, 
  the locus $\b(X)$ is given by the monic polynomial equation  
  $$\Big\{P_{\sfy_0}(x, \sfy) := 
    \prod_{(\a,\, \sfy_0)\, \in \, \ell_{\sfy_0} \cap\, \b(X)} 
    \Big(\prod_{\kappa \in A_\a} P_{\a, \kappa}(x, \sfy)\Big) = 0 \Big\},$$
  of degree $m_\b(\sfy_0) \leq d$ in $x$. Here the finite set $A_\a$ 
  labels the local branches of $\b(X)$ that contain the point 
  $(\a, \sfy_0) \in \ell_{\sfy_0} \cap \b(X)$. 

  By multiplying $P_{\sfy_0}(x, \sfy)$ with 
  $(x^2 +1)^{\frac{d- m_\b(\sfy_0)}{2}}$, we get a polynomial 
  $\tilde P_{\sfy_0}(x, \sfy)$ of degree $d$, which, for each 
  $\sfy \in U_{\sfy_0}$, shares with 
  $P_{\sfy_0}(x, \sfy)$ the zero set 
  $\b(X) \cap (\R \times U_{\sfy_0})$, as well as the divisors 
  $D^\b(\sfy)$.

  For each $\sfy \in Y$, consider the space $\mathcal X_\b(\sfy)$ of 
  monic polynomials $\tilde P(x)$ of degree $d$ such that their real divisors 
  coincide with the $\b$-induced divisor $D^\b(\sfy)$. We view 
  $\mathcal X_\b := \coprod_{\sfy \in Y} \mathcal X_\b(\sfy)$ as a 
  subspace of $Y \times \mathcal P_d$. It is equipped with the obvious 
  projection $p: \mathcal X_\b \to Y$. The smooth sections of $p$ are exactly 
  the smooth functions $\tilde P(x, \sfy)$ that are of interest to us. Each 
  $p$-fiber $\mathcal X_\b(\sfy)$ is a convex set. 
  It follows that, given several smooth 
  sections $\{\sigma_i\}_i$ of $p$, we conclude that 
  $\sum_i \phi_i \cdot \sigma_i$ 
  is again a section of $p$, provided that the smooth functions 
  $\phi_i: Y \to [0, 1]$ form a partition of unity. 
  %Note, that $\phi_i \cdot \sigma_i \notin \mathcal P_d$.

  Since $X$ is compact, $\pi(\b(X)) \subset Y$ is compact as well. Thus, it 
  admits a finite cover by the open sets $\{U_{\sfy_i}\}_i$ as above. 
  Let $\{\phi_i: Y \to [0, 1]\}_i$ be a smooth partition of unity, 
  subordinated to this finite cover. Then the monic $x$-polynomial 
  $$\tilde P(x,\sfy) := \sum_i \phi_i(\sfy)\cdot 
  \tilde P_{\sfy_i}(x, \sfy)$$ of degree $d$ has the desired 
  properties. In particular, its divisor is $D^\b(\sfy)$ for each 
  $\sfy \in Y$. Thus, using $\tilde P(x, \sfy)$, any immersion 
  $\b: X \to \R \times Y$, such that no $\omega^\b(\sfy)$ belongs to 
  $\Theta$, is realized by a smooth map $\Phi_\b: Y \to  \cP_d^{\bfc\Theta}$ 
  for which $\b(X) = \{\Phi_\b(\sfy)(x) = 0\big\}$. 
\end{proof}

\subsection{From the fundamental group to cobordisms of embeddings and back} 
We denote by $\mathcal L$ the foliation of the cylinder  
$S^1 \times \R$, formed by the fibers $\{\ell_\psi\}_{\psi \in S^1}$ 
of the obvious projection $q: \R \times S^1 \to S^1$, and by 
$\mathcal L^\bullet$ the $1$-dimensional foliation of $\R \times S^1 \times [0, 1]$, formed by the 
fibers of the obvious projection $Q: \R \times S^1 \times [0, 1] \to S^1 
\times [0, 1]$. We pick a base point $\psi_\star \in S^1$ and the leaf 
$\ell_\star$ of $\mathcal L$ that corresponds to $\psi_\star$. Similarly, 
for each $t \in [0,1]$, we fix the base leaf  $\ell_\star(t)$ of 
$\mathcal L^\bullet$ passing through the point $(\psi_\star, t)$ on the base.

\medskip
We consider  {\it regular embeddings} $\b: M \to S^1\times \R$ of a collection of disjoint circles $S^1$  denoted by $M$ such 
that:
\begin{itemize}
	\item[(P1)] for each $\psi \in S^1$, the multiplicity defined in  \ref{eq2.0} satisfies the inequality $m_\b(\psi) \leq d$; 
	\item[(P2)] no leaf $\ell_\psi$ of $\mathcal L$ has  the combinatorial tangency pattern $\om^\b(\psi)$ with $\beta(M)$ belonging to the poset $\paOm{\langle d]}{\geq 2}$; thus, $\om^\b(\psi) \in \paOm{\langle d]}{\leq 1} = \mathbf c\paOm{\langle d]}{\geq 2}$ so that the map $q\circ\b: M \to S^1$ has only Morse type singularities,
	\item[(P3)] $\b(M) \cap \ell_\star = \emptyset$.
\end{itemize}

(An embedding is called {\it regular} if it is also an immersion, i.e. each tangent space to a point in the source is mapped non-degenerately to the tangent space of the image point). 

\medskip
The next definition explains our notion of \emph{cobordism of regular embeddings}, which deviates from the standard  cobordism theory \cite{Ka4}. (The same definition works for  immersions). 

\begin{definition}\label{bordism}
 We say that regular embeddings $\b_0: M_0 \to S^1\times \R$, $\b_1: M_1 \to S^1\times \R$ of collections of circles $M_0$ and $M_1$ are 
\emph{cobordant}, if there exists a compact smooth orientable surface $W$ 
with boundary $\d W = M_1 \coprod (-M_0)$ and a regular embedding  
$B: W \to \R \times S^1 \times [0,1]$ such that:
\begin{itemize} 
  \item $B|_{M_0} = \b_0$ and $B|_{M_1} = \b_1$;
  \item the projection $W \stackrel{B}{\rightarrow} \R \times S^1 \times [0,1] \to [0,1]$ is a Morse function for which $0$ and $1$ are regular values;
  \item for each $(\psi, t) \in S^1 \times [0,1]$, the multiplicity $m_B((\psi, t)) \leq d$ (see \ref{eq2.0});
  \item for each $(\psi, t) \in S^1 \times [0,1]$, the tangency pattern $\omega^B((\psi, t))$ does not belong to $\paOm{\langle d]}{\geq 2}$;
\end{itemize}

We denote by $\mathcal B(\R \times S^1; \mathbf{c}\paOm{\langle d]}{\geq 2})$ the set of 
cobordism classes of such embeddings $\b: M \to S^1\times \R$. 
\end{definition}

Note that $\mathcal B(\R \times S^1; \mathbf {c}\paOm{\langle d]}{\geq 2})$ is the set of cobordism classes of \emph{regularly embedded} curves in $\R \times S^1$ (and not the usual group of  cobordisms of singular $1$-manifolds with the target space $\R \times S^1$). Note also that the locus $B^{-1}(\R \times S^1, t) \subset W$ may fail to satisfy the requirements (P1)-(P3)  for some $t \in (0, 1)$. In particular, $B^{-1}(\R \times S^1, t)$ may fail to be the image under a regular embedding of a $1$-dimensional manifold. However, the second bullet in \ref{bordism} insures  that $B^{-1}(\R \times S^1, t)$ considered as a bivariate function may only have the Morse-type singularities (i.e., maxima/minima or saddles).\smallskip

In fact, the set $\mathcal B(\R \times S^1;  \mathbf{c} \paOm{\langle d]}{\geq 2})$  carries a 
\emph{group} structure, where the group operation $\b \odot \b '$ is defined as follows. 
Since $\b(M) \cap \ell_\star = \emptyset$ and 
$\b'(M') \cap \ell_\star = \emptyset$, we may view $\b(M)$ as subset of the 
strip $(0, 2\pi) \times \R$, and $\b'(M')$ as subset of the strip 
$(2\pi, 4\pi) \times \R$. Concatenating we get $\b(M) \coprod \b'(M') \subset [0, 4\pi] 
\times \R$. Rescaling $\lambda: [0, 4\pi] \to [0, 2\pi]$ we  
place the locus $\b(M) \coprod \b'(M')$ in back $[0, 2\pi] \times \R$, and 
thus in $S^1\times \R$. Evidently, this operation produces a pattern 
$\b(M) \odot \b'(M')$ satisfying (P1)-(P3).

\medskip
Consider the domain  
  $$\mathcal E_d := \big\{(x, \vec a) |\; P(x, \vec a) \leq 0\big\} \subset \R \times \cP_d,$$ where 
  $P(x, \vec a) := x^d + \sum_{j=0}^{d-1} a_j x^j$. We denote by $\d \mathcal E_d$ the boundary of $\mathcal E_d$. One can check that $\d \mathcal E_d$ is 
  a smooth hypersurface diffeomorphic to $\R^d$. 
  
  Let $\pi: \R \times \cP_d  \to  \cP_d$ denote the obvious projection. 
  Then $\pi^{-1}(\vec a) \cap \d \mathcal E_d$ is the support of the real 
  divisor $D_\R(P)$ of the $x$-polynomial $P(x, \vec a)$.\smallskip
  
\begin{definition}\label{def.E-reg} A smooth map $\Phi: Y \to \mathcal P_d$ is called $(\d\mathcal E_d)$-\emph{regular}
  if the  map $\Phi \times \mathsf{id} : Y \times \R \to \mathcal P_d \times \R$ is transversal to $\d \mathcal E_d$.
  \end{definition}
  
\begin{lemma}\label{lem.transversality} Let $Y$ be a compact smooth manifold.  Then the $(\d\mathcal E_d)$-regular maps form an open and dense set in the space of all smooth maps.  
\end{lemma}

\begin{proof}
  A smooth map $\Phi: Y \to \mathcal P_d$, given by $d$ functions  $a_{d-1}(y), \ldots$, $a_1(y)$, $a_0(y)$ on $Y$ (called {\it coefficients}),  
  is $(\d\mathcal E_d)$-regular if and only if, in any local coordinates $\vec y = \{y_1, \dots , y_n\}$ on $Y$, the system 
    \begin{empheq}[left=\empheqlbrace]{align}
             x^d + a_{d-1}x^{d-1} + \ldots + a_1x + a_0 & =  0   \nonumber \\
      d\, x^{d-1} + (d-1) a_{d-1}x^{d-2} + \ldots + a_1 & =  0  \label{eq.6.7_E_reg} \\ 
      \Big\{\frac{\d a_{d-1}}{\d y_j} u^{d-1} + \ldots + \frac{\d a_{1}}{\d y_j}u + \frac{\d a_0}{\d y_j} & = 0\,\Big\}_{j \in [1, n]} \nonumber
    \end{empheq}
of $(n+2)$ equations has no solutions in $\vec y$ for all $x$, and a similar property holds for $\d Y$. Indeed, $\d\mathcal E_d$ is given by the equation $$\wp(x, \vec a) := x^d + a_{d-1}x^{d-1} + \ldots +a_1x + a_0 = 0.$$ The pull-back $\Psi^\ast(\wp)$ of $\wp$ under the map $\Psi = (\mathsf{id}, \Phi): \R \times Y \to \R \times \mathcal P_d$ is the function $$x^d + a_{d-1}(y) x^{d-1} + \ldots +a_1(y) x + a_0(y)$$ on $\R \times Y$. Thus, the first equation in \ref{eq.6.7_E_reg}  defines the preimage of $\d\mathcal E_d$ under $\Psi$.  The transversality of $\Psi$ to $\d\mathcal E_d$ can be expressed as the non-vanishing of the $1$-jet of $\Psi^\ast(\wp)$ along the locus $\{\Psi^\ast(\wp) = 0\}$. In local coordinates on $\R \times Y$, the vanishing of the jet $\mathsf{jet}^1(\Psi^\ast(\wp))$ is exactly the constraints given by \ref{eq.6.7_E_reg}.
\smallskip

Note that, for each $x \in \R$, the system \ref{eq.6.7_E_reg} imposes $(n+2)$ \emph{affine} constraints on the functions $\{a_k: Y \to \R\}_{k \in [1,d]}$ and their first derivatives $\big\{\frac{\d a_k}{\d y_j}\big\}$. Therefore, for any $x$, \ref{eq.6.7_E_reg} defines an affine subbundle $\mathcal W(u)$ of the jet bundle $\{\mathsf J^1(Y, \mathcal P_d) \to Y\}$. Thus, the union $\mathcal W = \bigcup_{u \in \R} \mathcal W(u)$ is a ruled variety, residing in $\mathsf J^1(Y, \mathcal P_d)$. Since $\text{codim}(\mathcal W(u)) = n+2$, the codimension of $\mathcal W$ in $\mathsf J^1(Y, \mathcal P_d)$ is $n+1$. 

Consider the jet map $\mathsf{jet}^1(\Phi): Y \to \mathsf J^1(Y, \mathcal P_d)$. By the Thom Transversality Theorem (see \cite{Th} or \cite{GG}, Theorem 4.13), the space of $\Phi$ for which $\mathsf{jet}^1(\Phi)$ is transversal to the subvariety $\mathcal W$ is open and dense (recall that $Y$ is compact). Since $Y$ is $n$-dimensional, this transversality implies that $(\mathsf{jet}^1(\Phi))(Y) \cap \mathcal W = \emptyset$ for an open and dense set of maps $\Phi$. 

Similar arguments apply to the smooth maps $\Phi^\d : \d Y \to \mathcal P_d$; so, we may first perturb  a given map $\Phi: Y \to \mathcal P_d$ to insure the $(\d\mathcal E_d)$-regularity of $\Phi^\d = \Phi|_{\d Y}$ and then perturb $\Phi$ to insure its $(\d\mathcal E_d)$-regularity, while keeping the regularity of $\Phi^\d$.   

Therefore, the set of $(\d\mathcal E_d)$-regular maps $\Phi$ is open and dense in $C^\infty(Y, \mathcal P_d)$. 
\end{proof}

\begin{corollary} \label{cor.transversality} Let $Y$ be a compact smooth manifold.  Consider a smooth map $\Phi: Y \to \cP_d^{\bfc\paOm{\langle d]}{\geq 2}}$  that is 
transversal to the non-singular part $\mathcal D_d^\circ := \mathcal D_d \cap \cP_d^{\bfc\paOm{\langle d]}{\geq 2}}$ of the discriminant variety $\mathcal D_d \subset \cP_d$.
 
 Then the map $\mathsf{id} \times \Phi : \R \times Y  \to \R \times \cP_d^{\bfc\paOm{\langle d]}{\geq 2}} $ is transversal to the hypersurface $\d \mathcal E_d$, i.e., $\Phi$ is $(\d\mathcal E_d)$-regular. Once more, we conclude that such maps $\Phi$ form an open and dense subset of the space $C^\infty(Y, \cP_d^{\bfc\paOm{\langle d]}{\geq 2}})$.
\end{corollary}

\begin{proof} For each $y \in Y$, consider the line $\ell_{y}:= \pi^{-1}(\Phi(y)) \subset  \R \times \cP_d^{\bfc\paOm{\langle d]}{\geq 2}}$ and a point $(P, x) \in \d \mathcal E_d \cap \ell_{y}$. If $x \in \R$ is a simple real 
  root of the polynomial $P$, then the line $\ell_{y}$ is transversal to the hypersurface 
  $\d \mathcal E_d$ at $(x, P)$. If $x$ is a real root of 
  multiplicity $2$, then $(x, P) \in \mathcal \R \times D_d^\circ$, and by 
  the transversality of $\Phi$ to $\mathcal D_d^\circ$, the map $\mathsf{id} \times \Phi$ is 
 transversal  to the boundary $\d\mathcal E_d$ at the point $(x, P)$. 
\end{proof}

The next result is similar to Arnold's \ref{th:Arnold2}, see Introduction. \ref{fig:kidney} illustrates case $d=6$, for which 
$\mathcal B(S^1\times \R; \mathbf {c} \paOm{\langle 6]}{\geq 2})$ is the free group on $6$ 
generators, represented by disjoint  loops.

\begin{theorem} \label{cobordism} 
 The cobordism group $\mathcal B(\R \times S^1; \mathbf{c} \paOm{\langle d]}{\geq 2}))$, where 
  $d \equiv 0 \mod 2$, is isomorphic to the fundamental group 
	$\pi_1(\cP_d^{\bfc\paOm{\langle d]}{\geq 2}}, pt)$, and thus is a free group in 
  $\frac{d(d-2)}{4}$ generators.
\end{theorem}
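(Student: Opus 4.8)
The plan is to construct mutually inverse homomorphisms between $\mathcal B(A;\,\bfc\paOm{\langle d]}{\geq 2})$ and $\pi_1(\cP_d^{\bfc\paOm{\langle d]}{\geq 2}})$, using \ref{LIFT} in one direction and a transversality/section argument in the other, and then invoke \ref{lm2b} for the count of generators. First I would fix the base point of $\cP_d^{\bfc\paOm{\langle d]}{\geq 2}}$ to be the polynomial with no real roots (legitimate since $d$ is even) and the base leaf $\ell_\star$ of $\mathcal L$; condition (P4) guarantees that near $\ell_\star$ an immersed curve $\b(M)$ carries the empty real divisor, matching the base point. Given a class $[\b]\in\mathcal B$, \ref{LIFT} produces a map $\Phi_\b\colon S^1\to\cP_d^{\bfc\paOm{\langle d]}{\geq 2}}$ with $\b(M)=\{(\psi,x)\mid\Phi_\b(\psi)(x)=0\}$; condition (P3) (Morse type for $q\circ\b$) together with (P2) ensures $\Phi_\b$ is a loop meeting the discriminant $\mathcal D_d$ transversally, so it has a well-defined class in $\pi_1$. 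I would then check that this class is independent of the choices in the proof of \ref{LIFT} (partition of unity, local normal forms): two such lifts of the same $\b$ differ by a homotopy through sections of the convex-fibered space $\mathcal X_\b$, exactly as in the last paragraph of the proof of \ref{LIFT}, so they are homotopic in $\cP_d^{\bfc\paOm{\langle d]}{\geq 2}}$.

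Next I would show this assignment $[\b]\mapsto[\Phi_\b]$ is well-defined on cobordism classes and is a homomorphism. For well-definedness, a cobordism $B\colon W\to A\times\R$ as in \ref{bordism} is, by the relative version of \ref{LIFT} applied over $S^1\times[0,1]$, realized by a map $\Psi\colon S^1\times[0,1]\to\cP_d^{\bfc\paOm{\langle d]}{\geq 2}}$ restricting to $\Phi_{\b_0}$ and $\Phi_{\b_1}$ on the two ends (the Morse condition on $\Pi\circ B$ with regular values $0,1$ makes the two ends genuine loops and the interior a homotopy); hence $[\Phi_{\b_0}]=[\Phi_{\b_1}]$. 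The group operation $\b\odot\b'$ was defined precisely by placing $\b(M)$ over $[0,2\pi]$ and $\b'(M')$ over $[2\pi,4\pi]$ and rescaling, which under $\Phi$ becomes concatenation of loops; since $\pi_1$ concatenation is the product, $[\Phi_{\b\odot\b'}]=[\Phi_\b]\cdot[\Phi_{\b'}]$.

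For the inverse direction, given a loop $\g\colon S^1\to\cP_d^{\bfc\paOm{\langle d]}{\geq 2}}$, after a small homotopy (general position) I may assume $\g$ is smooth, based away from $\mathcal D_d$, meeting each wall $\mathring\sR_d^{\omega_{ij}}$ transversally, and with $q\circ\b$ Morse, where $\b\colon M\to A$ is the immersed curve $M:=\{(\psi,x)\mid\g(\psi)(x)=0\}$; this $M$ is a closed $1$-manifold because $\g$ avoids multiplicities $\geq 3$ and double roots of multiplicity $2$, so the real roots vary in smooth families with only fold (Morse) catastrophes along the walls. Conditions (P1)--(P4) hold by construction, giving a class in $\mathcal B$, and a homotopy of loops in $\cP_d^{\bfc\paOm{\langle d]}{\geq 2}}$ sweeps out the required cobordism surface $W$ (again in general position, with the projection to $[0,1]$ Morse). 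One checks the two constructions are mutually inverse: starting from $\b$, forming $\Phi_\b$, and taking its zero set returns $\b(M)$ on the nose by \ref{LIFT}; starting from $\g$ and lifting back, the zero set determines $\g$ up to homotopy by the convexity argument above. Finally, \ref{lm2b} gives $\pi_1(\cP_d^{\bfc\paOm{\langle d]}{\geq 2}})\cong$ the free group on $\frac{d(d-2)}{4}$ generators for even $d$, completing the proof; the explicit generators are the immersed curves of \ref{fig:kidney}.

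The main obstacle I expect is establishing that the constructions descend to \emph{cobordism} classes rather than mere homotopy classes, i.e. carefully matching the somewhat nonstandard cobordism relation of \ref{bordism} (orientable surface $W$, Morse projection to $[0,1]$, forbidden tangency patterns along the trace) with ordinary homotopy of loops — this requires a parametrized version of \ref{LIFT} over $S^1\times[0,1]$ and a transversality argument showing a generic homotopy of loops produces a surface satisfying all four bulleted conditions of \ref{bordism}. Once that correspondence is set up, the homomorphism property and the bijectivity are essentially formal, and the rank count is immediate from \ref{lm2b}.
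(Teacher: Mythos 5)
Your overall route is the same as the paper's: the dictionary is the zero-locus construction $\gamma \leftrightarrow \Xi_\gamma=\{(\psi,x)\,:\,\gamma(\psi)(x)=0\}$, with \ref{LIFT} (together with convexity of the fibers $\mathcal X_\b(\sfy)$, which gives uniqueness of lifts up to homotopy) used to pass from immersed curves and cobordism surfaces back to families of polynomials, concatenation matching $\odot$, and \ref{lm2b} supplying the rank. That you run the map from $\mathcal B(A;\bfc\paOm{\langle d]}{\geq 2})$ to $\pi_1$ while the paper defines $\Xi_\ast$ in the opposite direction is immaterial: either way one must prove the same two implications, namely that homotopic loops have cobordant zero loci and that a cobordism in the sense of \ref{bordism} lifts to a homotopy of loops.

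The genuine gap is the first implication, which you assert via ``general position'' and then explicitly defer as ``the main obstacle'': that a generic homotopy $\Gamma: S^1\times[0,1]\to \cP_d^{\bfc\paOm{\langle d]}{\geq 2}}$ sweeps out a surface satisfying \emph{all} bullets of \ref{bordism}, in particular that the composition with the projection to $[0,1]$ is Morse. This is where most of the paper's proof lives, and it is \emph{not} a generic-position statement about $\Gamma$ alone. The paper constructs the surface as $W=\Lambda^{-1}(\d\mathcal E_d)$, where $\mathcal E_d=\{P(x,\vec a)\le 0\}$ and $\Lambda=\Gamma\times\mathsf{id}_\R$; transversality of $\Gamma$ to $\mathcal D_d^\circ$ makes $W$ an embedded surface, but the critical points of the time function $t^\bullet:W\to[0,1]$ are forced to lie on the fold locus $Z=W\cap\mathcal F_d$ of $Q|_W$, which maps diffeomorphically onto $G=\Gamma^{-1}(\mathcal D_d^\circ)\subset S^1\times[0,1]$. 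Their nondegeneracy is therefore a condition on the restriction of the time function to this fold curve, and the paper secures it by perturbing $t$ on $S^1\times[0,1]$ (keeping $t\equiv 0,1$ on the ends and $t$ submersive) so that $t|_G$ is Morse, then identifying the critical points of $t^\bullet$ on $W$ with those of $t|_Z\cong t|_G$. Without this argument (or a substitute), your assignment is not known to be well defined on homotopy classes, equivalently injectivity on cobordism classes is unproved, so the proposal stops short of the theorem precisely at its crux. A smaller, fixable point: \ref{LIFT} is stated for a closed source manifold and does not prescribe boundary values, so your ``relative version'' producing $\Psi$ with $\Psi|_{t=0}=\Phi_{\b_0}$ and $\Psi|_{t=1}=\Phi_{\b_1}$ needs the convex-interpolation argument applied on the two ends, exactly as the paper uses it implicitly in its step (2).
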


\begin{proof} 
Each continuous loop $\gamma: S^1 \to \cP_d$ produces a 
   locus $\Xi_\gamma$  
  in the cylinder $\R \times S^1$ given by the formula 
  $$\Xi_\gamma := \{(x, \psi) \in \R \times S^1\;|\; \gamma(\psi)(x)= 0\}.$$ 
  
Note that,  in general, $\Xi_\gamma$ is not an image of a $1$-dimensional compact manifold $M$ under an immersion or a regular embedding. This complication calls for an appropriate approximation to $\gamma$.
  
Since $\cP_d^{\bfc\paOm{\langle d]}{\geq 2}}$ is open in $\cP_d$, any loop $\gamma: S^1 \to \cP_d^{\bfc\paOm{\langle d]}{\geq 2}}$ can be approximated by a smooth loop $\tilde\gamma: S^1 \to \cP_d^{\bfc\paOm{\langle d]}{\geq 2}}$ that is in the homotopy class of $\g$ and  is 
transversal to the non-singular part $\mathcal D_d^\circ := \mathcal D_d \cap  \cP_d^{\bfc\paOm{\langle d]}{\geq 2}}$ of the discriminant variety $\mathcal D_d \subset \cP_d$. By  \ref{cor.transversality}, $\tilde\gamma$ is a $(\d\mathcal E_d)$-regular map.     
Thus, the locus $\Xi_{\tilde\gamma} \subset \R \times S^1$, being equal to $(\mathsf{id} \times \tilde\g)^{-1}(\d\mathcal E_d)$, is a $1$-dimensional smooth submanifold  of the cylinder $\R \times S^1$. By this construction, the  tangency patterns of $\Xi_{\tilde\gamma}$ to the leaves $\{\ell_\psi\}_{\psi \in S^1}$ belong to the "open" poset $\bfc\paOm{\langle d]}{\geq 2}$. Therefore, $\Xi_{\tilde\gamma} \subset \R \times S^1$ satisfies (P1)-(P2). If the image $\tilde\gamma(\star)$ of the base point $\star \in S^1$ belongs to the 
  $d$-cell $\mathsf R_d^{()} \subset \cP_d$ that represents polynomials 
  without real roots, then property (P3) is also satisfied. 

 In particular, the double tangencies to the leaves $\{\ell_\psi\}_\psi$  and the cubic 
	tangencies to $\{\ell_\psi\}_\psi$ are forbidden when $\tilde\gamma(S^1) \subset  \cP_d^{\mathbf{ c}\paOm{\langle d]}{\geq 2}  }$: 
	they correspond to certain compositions $\omega_\psi \in \paOm{\langle d]}{\geq 2}$. 
\smallskip

Note that, for each $\tilde\gamma: S^1 \to \cP_d^{\bfc\paOm{\langle d]}{\geq 2}}$ that is $(\d\mathcal E_d)$-regular,  applying the map $\Phi_{\tilde\b}$ from \ref{LIFT}  to the embedding $\tilde\b: \Xi_{\tilde\gamma} \hookrightarrow \R \times S^1$, we obtain the loop $\tilde\gamma$ back.  
 Therefore, the correspondence $\gamma \leadsto \tilde\g  \leadsto \Xi_{\tilde\gamma}$ is a good candidate for a realization of  the desired group isomorphism 
\begin{eqnarray}\label{eq.iso}
\Xi_\ast: \pi_1(\cP_d^{\mathbf{c} \paOm{\langle d]}{\geq 2}}, pt) \stackrel{\approx}{\rightarrow} \mathcal B(\R \times S^1; \mathbf {c} \paOm{\langle d]}{\geq 2}),
\end{eqnarray}
(where $pt \in \mathsf R_d^{()}$), which is a posteriori the inverse of the map $\Phi$ from \ref{LIFT}.\smallskip

We have  already shown that any regular embedding $\b: M \to \R \times S^1$ which satisfies (P1)-(P3) 
 produces a based loop $\gamma(\b): (S^1, \star) \to (\cP_d^{\mathbf {c}\paOm{\langle d]}{\geq 2}}, pt)$. 
 Thanks to \ref{LIFT}, the locus $\Xi(\gamma(\b)) \subset \R \times S^1$ produces the embedding $\b$. Therefore, $\Xi_\ast$ from \ref{eq.iso} is surjective.
 \smallskip
 
 It remains to show that:

  \begin{itemize}
	  \item[\sf{(1)}] homotopic $(\d\mathcal E_d)$-regular loops $\gamma_0, \gamma_1: S^1 \to \cP_d^{\mathbf{c} \paOm{\langle d]}{\geq 2}}$ produce cobordant 
	    patters $\Xi_{\gamma_0}$  $\Xi_{\gamma_1}$ in $\R \times S^1$ (so that the correspondence $\Xi_\ast$ in \ref{eq.iso} is well-defined); 

    \item[\sf{(2)}] if $\Xi(\gamma)  \subset \R \times S^1$ is cobordant in $\R\times S^1 \times [0, 1]$ to $\emptyset$ (in the sense of \ref{bordism}),  then $\gamma$ is contractible in 
	    $\cP_d^{\mathbf {c}\paOm{\langle d]}{\geq 2}}$ (i.e., $\Xi_\ast$ is an injective map). 
  \end{itemize}

Thanks to \ref{cor.transversality}, without lost of generality, we may assume that $\g_0, \g_1$ are $(\d\mathcal E_d)$-regular (equivalently, transversal to $\mathcal D_d^\circ$).
  By a general position argument, we may assume that the homotopy 
  $\Gamma: S^1 \times [0,1] \to \cP_d^{\bfc\paOm{\langle d]}{\geq 2}}$ 
  that links $\gamma_0$ and $\gamma_1$ is smooth and $(\d\mathcal E_d)$-regular.

  If $\Gamma$ is $(\d\mathcal E_d)$-regular, then   the map 
  $$\Lambda := \mathsf{id}_\R \times \Gamma:\; \R \times (S^1\times [0, 1])  \longrightarrow  \R \times \cP_d^{\bfc\paOm{\langle d]}{\geq 2}}$$ 
  is transversal to the hypersurface $\d \mathcal E_d \subset \R \times \cP_d$. 
 
  This transversality implies that $W := \Lambda^{-1}(\d\mathcal E_d)$ is a regularly embedded surface in the shell $(\R \times S^1 \times [0, 1]) \cong \R \times S^1 \times [0, 1]$.  
It delivers the desired cobordism between the loop patterns $W \cap (\R \times S^1 \times \{0\})$ and $W \cap (\R \times S^1 \times \{1\})$. As a result, the map $\Xi_\star$ is well-defined. \smallskip

 Let $\g: S^1 \to \cP_d^{\mathbf{c} \paOm{\langle d]}{\geq 2}}$ be a smooth $(\d\mathcal E_d)$-regular map such that the $1$-manifold $\Xi(\g)$ is the boundary of a smooth orientable surface $W \subset  \R \times S^1 \times [0, 1)$ as in \ref{bordism}. To validate {\sf(2)}, we  again  use \ref{LIFT} to produce a smooth 
  $(\psi, t)$-parameter family of $x$-polynomials $\{P(x, \psi, t)\}_{\psi \in S^1,\; t \in [0,1]}$, whose roots form the surface $W$. 
  
  \smallskip
   Using the second bullet from \ref{bordism}, and  \ref{LIFT}, we see that the $\psi$-family $\{P(x, \psi, t)\}_\psi$ 
  gives rise to a loop $\gamma_t: S^1 \to \cP_d^{\mathbf {c}\paOm{\langle d]}{\geq 2}}$ 
  that depends continuously on $t$. Since for $t_\star =1$, the $t_\star$-slice of $W$ is empty, the loop 
  $\gamma_{t=1}$ is a subset of $\mathsf R_d^{()}$ and thus is contractible in $\cP_d^{\mathbf {c}\paOm{\langle d]}{\geq 2}}$. Therefore, the loop $\gamma_0$ is contractible in 
  $\cP_d^{\mathbf {c}\paOm{\langle d]}{\geq 2}}$.\smallskip
  
  To verify that the map $\Xi_\ast$ from \ref{eq.iso} is a group homomorphism is trivial.  
  Finally, applying \ref{lm2b}, we get that  $\mathcal B(\R \times S^1; \mathbf{c} \paOm{\langle d]}{\geq 2})$ is a free group in $\frac{d(d-2)}{4}$ generators.
\end{proof}

%\textcolor{red}{{\bf CHECK THIS !!}} 
Along the lines of \ref{bordism}, for any closed poset $\Theta \subset \paOm{\langle d]}{\geq 2}$, 
we can introduce the cobordism group $\mathcal B(\R \times S^1; d, \mathbf{c} \Theta)$ of regularly embedded closed $1$-manifolds in the cylinder $\R \times S^1$, embeddings that avoid the tangency patterns from $\Theta$ and the line $\ell_\star \subset \R \times S^1$, and such that the total multiplicity from \ref{eq2.0} is bounded from above by $d$. 

Since $\cP_d^{\bfc\Theta}$ is an open subset of $\cP_d$, \ref{lem.transversality} and \ref{LIFT} apply to maps $\Phi: S^1 \to \cP_d^{\bfc\Theta}$ and their homotopies. Note that some strata $\cP_d^\om$ of codimension $2$ may reside in $\cP_d^{\bfc\Theta}$. If we insist on making all the relevant maps of $S^1$ and their homotopies transversal to these strata $\cP_d^\om$ as well, then the 
$(\d\mathcal E_d)$-regularity of these maps will be insured. Therefore, repeating the proof of \ref{cobordism} word for word, %for any closed poset $\Theta \subset \paOm{\langle d]}{\geq 2}$, 
we get the following claim.
%}

\begin{theorem} For any closed poset $\Theta \subset \paOm{\langle d]}{\geq 2}$, the cobordism group $\mathcal B(\R \times S^1; d, \mathbf{c} \Theta)$ is isomorphic to the fundamental group $\pi_1(\cP_d^{\bfc\Theta}, pt)$.
\end{theorem}


\begin{thebibliography}{30}

\bibitem [Ar]{Ar} Arnold, V.I.,  
	{\it Spaces of functions with moderate singularities},
  (Russian) Funktsional. Anal. i Prilozhen. {\bf 23} (1989) 1--10; 
		translation in Funct. Anal. Appl. {\bf 23} (1989) 169--177 
		(1990).  

\bibitem[Bj]{Bj} Bj{\"o}rner, A.,
     {\it Topological methods}, In:Handbook of combinatorics, {V}ol. 2,
     1819--1872, Elsevier Sci. B. V., Amsterdam,1995.


\bibitem [GG]{GG} Golubitsky, M., Guillemin, V., {\it Stable Mappings and Their Singularities}, Graduate Texts in Mathematics 14, Springer-Verlag, New York Heidelberg Berlin, 1973.

\bibitem[Ha]{Ha} Hatcher, A.,
   {\it Algebraic Topology},
   Cambridge University Press, Cambridge,
   2002.

\bibitem[Ka]{Ka} Katz, G., 
  {\it The stratified spaces of real polynomials \& trajectory spaces of 
       traversing Flows}, 
  JP Journal of Geometry and Topology {\bf 19} (2016) 95--160.

\bibitem[Ka1]{Ka1} Katz, G., 
  {\it Flows in Flatland: a romance of few dimensions}, 
  Arnold Math. J. {\bf 3} (2016) 281--317.

\bibitem[Ka2]{Ka2} Katz, G., 
  {\it Transversally generic \& versal vector flows: semi-algebraic models 
  of tangency to the boundary}, 
  Asian J. Math. {\bf 21} (2017) 127--168.

\bibitem[Ka3]{Ka3} Katz, G., {\it Morse Theory of Gradient Flows, Concavity and Complexity on Manifolds with Boundary}, World Scientific Publishing Co. Pte. Ltd., Hackensack, NJ, 2020.

\bibitem [Ka4]{Ka4} Katz, G., {\it Spaces of  polynomials with constrained divisors as Grassmanians for immersions and embeddings}, arXiv: 2201.02744v1 [math.GT] 8 Jan 2022.

\bibitem [Ka5]{Ka5} Katz, G., {\it Spaces of  polynomials with constrained divisors as Grassmanians for traversing flows}, to appear in JP Journal of Geometry and Topology (arXiv: 2202.00862v2 [math.GT] 18 Apr 2022).

\bibitem[KSW]{KSW} Katz, G., Shapiro B., Welker, V., 
  {\it Real polynomials with constrained real divisors. II.  
		(Co)homology and stabilization}, in preparation.
		
\bibitem[Kh]{Kh} Khovanov, M., {\it Real $K(\pi, 1)$ arrangements from finite root systems }, Mathematical Research Letters 3, 261-274 (1996).		

\bibitem[Mor1]{Mor1} Morin, B., 
  {\it Formes canoniques des singularities d'une application differentiable}, 
  {\it Formes canoniques des singularities d'une application differentiable}, 
		Comptes Rendus Acad. Sci., Paris {\bf 260} (1965), 
		5662--5665.

\bibitem[Mor2]{Mor2} Morin, B., 
  {\it Formes canoniques des singularities d'une application differentiable}, 
		Comptes Rendus Acad. Sci., Paris {\bf 260} (1965) 
		6503--6506.


\bibitem [Th]{Th} Thom, R., {\it La classification des immersions}, S\'{e}min. Bourbaki 157, 1957-58.


\bibitem [Va]{Va} Vasiliev, V.A., 
  {\it Complements of Discriminants of Smooth Maps: Topology and Applications}, 
  Translations of Mathematical Monographs, vol. 98, 
  Amer. Math. Soc., Providence , 1994.


\end{thebibliography}
\end{document}